\DeclareMathOperator{\Cone}{Cone}
\DeclareMathOperator{\Diag}{Diag}
\DeclareMathOperator{\dist}{dist}
\DeclareMathOperator{\Gal}{Gal}
\DeclareMathOperator{\GL}{GL}
\DeclareMathOperator{\lcm}{lcm}
\DeclareMathOperator{\length}{length}
\DeclareMathOperator{\PRM}{PRM}
\DeclareMathOperator{\Proj}{Proj}
\DeclareMathOperator{\RM}{RM}
\DeclareMathOperator{\Sing}{Sing}
\DeclareMathOperator{\Vol}{Vol}
\DeclareMathOperator{\wdeg}{\w-deg}
\DeclareMathOperator{\WPRM}{WPRM}
\DeclareMathOperator{\wt}{wt}
\renewcommand{\AA}{\mathbb{A}}
\newcommand{\FF}{\mathbb{F}}
\newcommand{\GG}{\mathbb{G}}
\newcommand{\NN}{\mathbb{N}}
\newcommand{\PP}{\mathbb{P}}
\newcommand{\RR}{\mathbb{R}}
\newcommand{\TT}{\mathbb{T}}
\newcommand{\VV}{\mathbb{V}}
\newcommand{\ZZ}{\mathbb{Z}}
\newcommand{\Fq}{\mathbb{F}_q}
\newcommand{\aff}{\mathrm{aff}}
\newcommand{\Aff}{\mathbb{A}}
\newcommand{\bmu}{\bm{\mu}}
\newcommand{\card}[1]{\left\lvert#1\right\rvert}
\newcommand{\biggcard}[1]{\bigg\lvert#1\bigg\rvert}
\newcommand{\inv}{\mathrm{inv}}
\newcommand{\reg}{\mathcal{O}}
\newcommand{\sansS}{\mathsf{S}}
\newcommand{\set}[2]{\left\{#1\,:\,#2\right\}}
\newcommand{\w}{\mathsf{a}}
\begin{document}

\title{Hypersurfaces in weighted projective spaces over finite fields 
   with applications to coding theory}
\titlerunning{Hypersurfaces in weighted projective spaces}

\author{Yves Aubry,
       \hbox{Wouter Castryck},
       \hbox{Sudhir R.  Ghorpade},
       \hbox{Gilles Lachaud},
       \hbox{Michael E. O'Sullivan}, and
       \hbox{Samrith Ram}}
\authorrunning{Aubry, Castryck, Ghorpade, Lachaud, O'Sullivan, and Ram}

\institute{
     Yves Aubry 
\at  Institut de Math\'ematiques de Toulon (IMATH), 
     Universit\'e de Toulon, France and 
     Aix Marseille Univ., CNRS, Centrale Marseille, 
     I2M, Marseille, France.
\email{yves.aubry@univ-tln.fr}
\and Wouter Castryck 
\at  Laboratoire Painlev\'e, 
     Universit\'e de Lille-1, 
     Cit\'e Scientifique, 
     59\,655, Villeneuve d'Ascq, cedex, France and
     Departement Elektrotechniek imec-Cosic, 
     KU Leuven, 
     Kasteelpark Arenberg 10, 
     3001 Leuven, Belgium. 
\email{wouter.castryck@kuleuven.be}
\and Sudhir R.  Ghorpade 
\at  Department of Mathematics, 
     Indian Institute of Technology Bombay, Powai, 
     Mumbai 400076, India.
\email{srg@math.iitb.ac.in}
\and Gilles Lachaud 
\at  Aix Marseille Univ., CNRS, Centrale Marseille, 
     I2M, Marseille, France.
\email{gilles.lachaud@univmed.fr}
\and Michael E. O'Sullivan 
\at  Department of Mathematics and Statistics, 
     San Diego State University, 
     San Diego, CA 92182-7720, USA.
\email{mosullivan@mail.sdsu.edu}
\and Samrith Ram 
\at  Harish-Chandra Research Institute, 
     Chhatnag Road, Jhusi, 
     Allahabad 211019, India.
\email{samrithram@hri.res.in}
}

\maketitle

\abstract{
We consider the question of determining the maximum number of $\Fq$-rational 
points that can lie on a hypersurface of a given degree in a weighted projective
space over the finite field~$\Fq$, or in other words, the maximum number of 
zeros that a weighted homogeneous polynomial of a given degree can have in the 
corresponding weighted projective space over~$\Fq$. In the case of classical 
projective spaces, this question has been answered by J.-P.~Serre. In the case 
of weighted projective spaces, we give some conjectures and partial results. 
Applications to coding theory are included and an appendix providing a brief 
compendium of results about weighted projective spaces is also included. 
}

\section{Introduction}
\label{sec:intro}

Let $q$ be a prime power and let $\Fq$ denote the finite field with $q$
elements. Let $d \geq 0$ and $m \geq 1$ be integers. For any integer $r$, we 
define 
\[
p_r := \card{\PP^r(\Fq)} = q^r + q^{r-1} + \dots + 1 \quad \text{for }r\geq 0
       \quad \text{ and } \quad \mbox{$p_r:=0$ for } r < 0.
\] 
In a letter to M.~Tsfasman  in 1989, J.-P.~Serre~\cite{letterserre} proved that
for any nonzero homogeneous degree $d$ polynomial 
$F \in \Fq[X_0, X_1, \ldots, X_m]$, the hypersurface $V(F)$ consisting of 
$\Fq$-rational zeros of $F$ in the projective $m$-space $\PP^m$ satisfies
\begin{equation} 
\label{serresinequality}
\card{V(F)}\le d q^{m-1} + p_{m-2}.
\end{equation}
Note that if $d \ge q+1$, then $d q^{m-1} + p_{m-2} \ge p_m = \card{\PP^m(\Fq)}$, 
and thus the above bound is trivial in this case; moreover, the polynomial 
$X_0^{d-q-1}(X_0^qX_1 - X_0X_1^q)$ is evidently homogeneous of degree 
$d\ge q+1$ and has $p_m$ zeros in $\PP^m(\Fq)$. On the other hand, in the
nontrivial case when $d \le q+1$, the bound \eqref{serresinequality} is met by
\begin{equation} 
\label{serresinequalitymet}
F = \prod_{i=1}^d (\alpha_i X_0 - \beta_i X_1),
\end{equation}
whenever $(\alpha_1 : \beta_1), (\alpha_2 : \beta_2), \ldots, (\alpha_d : \beta_d)$
are distinct elements of $\PP^1(\Fq)$. It follows that if we let $e_q(d,m)$
denote the maximum possible number of $\Fq$-rational zeros in $\PP^m$ that a
nonzero homogeneous  polynomial of degree $d$ in $\Fq[X_0, X_1, \ldots, X_m]$  
can admit, then  
\begin{equation} 
\label{serreequality}
e_q(d,m) = \min \{ p_m, \; d q^{m-1} + p_{m-2} \}. 
\end{equation}
Alternative proofs of \eqref{serresinequality}, and hence \eqref{serreequality},
can be found in \cite{So} and~\cite{DG1}, whereas some extensions and 
generalizations are given in \cite{C} and~\cite{DG2}.  Serre's result has also
been applied to determine the minimum distance of the projective Reed--Muller 
codes, which were introduced by Lachaud in~\cite{lachaud}, and further studied 
in \cite{L} and~\cite{So}. 

In this paper we discuss how the bound~\eqref{serresinequality} can possibly be
generalized to weighted projective spaces, along with a number of partial 
results and some implications for coding theory.  Let us recall that given any 
positive integers $a_0, a_1, \ldots, a_m$, the corresponding weighted projective
space is defined by
\[ 
\PP(a_0, a_1, \ldots, a_m) := \left( \overline{\FF}_q^{m+1} \setminus \{ (0, 0, \ldots, 0) \} \right) / \sim
\]
where $ \overline{\FF}_q$ denotes an algebraic closure of $\Fq$ and the
equivalence relation $\sim$ is such that
\[ 
(x_0, x_1, \ldots, x_m) \sim (\lambda^{a_0} x_0 , \lambda^{a_1} x_1 , \ldots, \lambda^{a_m} x_m )
  \quad \text{ for every } \lambda \in \overline{\FF}_q^\ast. 
\]
The corresponding equivalence class is denoted by $(x_0 : x_1 : \cdots : x_m)$
and is called a weighted projective point. We say that the point is 
$\Fq$-rational if $(x_0 : x_1: \cdots : x_m) = (x_0^q : x_1^q : \cdots : x_m^q)$.
It can be shown using Hilbert's theorem 90 that every $\Fq$-rational point has
at least one representative in $\Fq^{m+1} \setminus \{ (0, 0, \ldots, 0) \}$. 
In fact, a finer analysis shows that it has exactly $q-1$ such representatives; 
see \cite[\S3]{perret1}. In particular, the total number of $\Fq$-rational
points equals $p_m$, i.e.~it is the same as in the non-weighted case. The 
weighted projective spaces are fascinating objects. On the one hand, they are
analogous to classical projective spaces, but they are often difficult to deal
with, partly since they can admit singularities. For the convenience of the 
reader, and possible future use, we include at the end of this paper a fairly 
self-contained appendix that provides a glossary of various notions and results 
concerning weighted projective spaces. 

Now let $\sansS = \Fq[X_0, X_1, \ldots, X_m]$ and consider a nonzero 
polynomial $F \in \sansS$ which is homogeneous of degree $d$ provided that
we measure $X_i$ with weight $a_i$ for each $i = 0, 1, \ldots, m$, so that
\[ 
F(\lambda^{a_0} X_0, \lambda^{a_1} X_1, \ldots, \lambda^{a_m} X_m) 
   = \lambda^d F(X_0, X_1, \ldots, X_m) 
     \quad \text{for all }\lambda \in \overline{\FF}_q^\ast. 
\]
Thus it is meaningful to consider the weighted projective hypersurface $V(F)$ 
of $\Fq$-rational points of $\PP(a_0, a_1, \ldots, a_m)$ at which $F$ 
vanishes. Our object of study is the quantity
\[ 
e_q(d; a_0,a_1, \ldots, a_m) := \max_{F \in \sansS_d \setminus \{0\} } \card{V(F)}, 
\]
where $\sansS_d$ denotes the space of weighted homogeneous polynomials in 
$\sansS$ of degree~$d$. One caveat is that $\sansS_d$ might be trivial
for certain values of $d$ (namely those values that are not contained in the
semigroup 
$a_0 \ZZ_{\geq 0} + a_1 \ZZ_{\geq 0} + \ldots + a_m \ZZ_{\geq 0}$),
in which case we say that $e_q(d; a_0,a_1, \ldots, a_m)$ is \emph{not defined}. 
Also note that $e_q(d; a_0, a_1, \ldots, a_m)$ is not necessarily increasing as 
a function in $d$: for instance $e_q(7;3,4) = 2$ while $e_q(8;3,4) = 1$ since 
the only monomials of (weighted) degree 7 and 8 are constant multiples of 
$X_0X_1$ and $X_1^2$ respectively.

Seeking inspiration in the example~\eqref{serresinequalitymet} that meets 
Serre's bound, it is natural to consider polynomials of the form
\begin{equation} 
\label{serresinequalitygen}
F = \prod_{i=1}^{d/a_{rs}} (\alpha_i X_r^{a_{rs}/a_r} - \beta_i X_s^{a_{rs}/a_s}),
\end{equation}  
where $r,s \in \{0, 1, \ldots, m \}$ are distinct indices, $a_{rs}$ is the least
common multiple of $a_r$ and $a_s$, $d$ is a multiple of $a_{rs}$ satisfying 
$d\le a_{rs}(q+1)$, and the $(\alpha_i : \beta_i)$'s are distinct elements of
$\PP^1(\Fq)$. In Section~\ref{section_preliminary} we will prove that
$\card{V(F)} = \left( {d}/{a_{rs}}\right) q^{m-1} + p_{m-2}$, leading to the 
following lower bound:

\begin{lemma} 
\label{serrelowerbound}
Let $a = \min \set{\lcm(a_r,a_s)}{0 \leq r < s \leq m}$ and 
assume that $a \mid d$. Then
\[ 
e_q(d;a_0,a_1, \ldots, a_m) \geq \min \left\{ p_m,  \; \frac{d}{a} q^{m-1} + p_{m-2} \right\}. 
\]
\end{lemma}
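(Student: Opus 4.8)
The plan is to realise the bound by the explicit polynomials of the form \eqref{serresinequalitygen}, treating separately the two regimes $d/a \le q+1$ and $d/a > q+1$; the case split matches the minimum because $(d/a)q^{m-1} + p_{m-2} \le p_m$ exactly when $d/a \le q+1$. Fix a pair $r<s$ with $\lcm(a_r,a_s)=a$ and set $b_r = a/a_r$, $b_s = a/a_s$; writing $g=\gcd(a_r,a_s)$ one has $b_r = a_s/g$ and $b_s = a_r/g$, so $\gcd(b_r,b_s)=1$, a fact that will be decisive. Each factor $F_i := \alpha_i X_r^{b_r} - \beta_i X_s^{b_s}$ is weighted homogeneous of degree $a$, so $V(F_i)$ is well defined, and $F := \prod_{i=1}^{d/a} F_i$ lies in $\sansS_d\setminus\{0\}$.

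For $d/a \le q+1$ I would take the $(\alpha_i:\beta_i)$ to be $d/a$ distinct points of $\PP^1(\Fq)$ and compute $\card{V(F)}=\card{\bigcup_i V(F_i)}$ by a clean inclusion--exclusion. The skeleton is: every point of the coordinate locus $V(X_r,X_s)$, a weighted projective space of dimension $m-2$ and hence carrying $p_{m-2}$ rational points, lies on all the $V(F_i)$; while for a point with $(x_r,x_s)\ne(0,0)$ the ratio $(x_s^{b_s}:x_r^{b_r})$ is a well-defined element of $\PP^1(\Fq)$, and $F_i$ vanishes there precisely when $(\alpha_i:\beta_i)$ equals it, so --- the $(\alpha_i:\beta_i)$ being distinct --- such a point lies on at most one $V(F_i)$. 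Hence $\card{V(F)} = p_{m-2} + \sum_i \bigl(\card{V(F_i)} - p_{m-2}\bigr)$.

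The crux is the single-component count $\card{V(F_i)} = p_{m-1}$ for every $(\alpha_i:\beta_i)\in\PP^1(\Fq)$, and this is the step I expect to be the main obstacle. Using that each $\Fq$-rational point has exactly $q-1$ representatives in $\Fq^{m+1}\setminus\{0\}$, this count equals $(Aq^{m-1}-1)/(q-1)$, where the $m-1$ free coordinates contribute $q^{m-1}$, the all-zero vector accounts for the $-1$, and $A := \card{\set{(u,v)\in\Fq^2}{\alpha u^{b_r}=\beta v^{b_s}}}$; thus it suffices to prove $A=q$. The degenerate cases $(\alpha_i:\beta_i)\in\{(1:0),(0:1)\}$ give $V(X_r)$ or $V(X_s)$, with $p_{m-1}$ points. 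For $\alpha,\beta\ne 0$ the only solution with $uv=0$ is $(0,0)$, and over $(\Fq^*)^2$, writing $\Fq^*=\langle\gamma\rangle$, $u=\gamma^i$, $v=\gamma^j$, the equation becomes the congruence $b_r i \equiv e + b_s j \pmod{q-1}$ with $\gamma^e=\beta/\alpha$; since $\gcd\bigl(b_s,\gcd(b_r,q-1)\bigr)=1$ by coprimality of $b_r,b_s$, exactly $(q-1)/\gcd(b_r,q-1)$ residues $j$ are admissible, each with $\gcd(b_r,q-1)$ solutions $i$, for a total of $q-1$ pairs. Hence $A=q$, $\card{V(F_i)}=p_{m-1}$, and the skeleton yields $\card{V(F)} = (d/a)q^{m-1}+p_{m-2}$, the desired value.

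Finally, for $d/a > q+1$ the target is $p_m$. Letting the $(\alpha_i:\beta_i)$ run over all $q+1$ points of $\PP^1(\Fq)$ makes $\prod_i F_i$ vanish at every $\Fq$-rational point, by the same dichotomy used above; this polynomial has degree $a(q+1)$, and multiplying it by $X_r^{(d-a(q+1))/a_r}$ --- whose exponent is a positive integer because $a\mid d$ and $a_r\mid a$ --- produces an element of $\sansS_d$ with the same (full) zero locus, giving $\card{V(F)}=p_m$ and completing the proof.
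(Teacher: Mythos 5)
Your proof is correct, and it reaches the count $\card{V(F)} = (d/a)q^{m-1} + p_{m-2}$ by a route that differs in its machinery from the paper's. The paper derives Lemma~\ref{serrelowerbound} as a special case of the much more general Lemma~\ref{pointcountinglemma}: it counts the zeros of $F_{\ell,s_0,s_1,\sigma_0,\sigma_1}$ by stratifying according to which coordinates vanish (all nonzero; some $Y_i$ and some $Z_j$ zero; etc.), and the key stratum --- the dense torus --- is handled by Lemma~\ref{bivariatecountlemma}, proved via a monomial change of variables coming from a matrix in $\GL_{s_0+s_1}(\ZZ)$ whose first row is the coprime exponent vector. You instead specialize to the signature-$(1,1)$ situation that the lemma actually needs, and organize the count factor by factor: the well-defined ratio $(x_s^{b_s}:x_r^{b_r})\in\PP^1(\Fq)$ shows that the $V(F_i)$ pairwise intersect exactly in the coordinate locus $V(X_r,X_s)$ with its $p_{m-2}$ points, and each $\card{V(F_i)}=p_{m-1}$ is established by counting representatives with an elementary discrete-logarithm congruence argument (where $\gcd(b_r,b_s)=1$ plays the role that the unimodular matrix plays in the paper). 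Both proofs ultimately rest on the same fact that each rational point has exactly $q-1$ representatives in $\Fq^{m+1}\setminus\{0\}$. What the paper's generality buys is the larger family $F_{\ell,s_0,s_1,\sigma_0,\sigma_1}$, used later to show the lower bound of this very lemma can be strict for $m>1$; what your argument buys is self-containedness and elementarity, plus one genuine point of added completeness: you explicitly treat the regime $d/a > q+1$ (padding the degree-$a(q+1)$ space-filling product by the monomial power $X_r^{(d-a(q+1))/a_r}$ to reach $p_m$), whereas the paper's construction is only stated for $d \le a_{rs}(q+1)$ and leaves that trivial case implicit.
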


\begin{example}
Let us prove that equality holds in the lemma for $\PP(a_0,a_1)$.  Writing 
$a = \lcm(a_0,a_1)$, we  want to prove that $e_q(d;a_0,a_1) = \min \{p_1,d/a\}$.
Let $F \in  \sansS_d\setminus \{0\}$ and   note that
\[
F(X_0,X_1)/X_1^{d/a_1} 
\] 
can be viewed as a univariate polynomial in $T = X_0^{a/a_0} / X_1^{a/a_1}$. 
Indeed, if a monomial $ X_0^{\beta_0} X_1^{\beta_1} $ is weighted homogeneous 
of degree $d$, so that  $\beta_0 a_0 + \beta_1 a_1 = d$, then an easy 
calculation shows that 
\[ 
\frac{X_0^{\beta_0} X_1^{\beta_1} }{X_1^{d/a_1}} 
   = \left( \frac{X_0^{a/a_0}}{X_1^{a/a_1}} \right)^{\beta_0 a_0 / a }. 
\]
Let $d=ak$ and $b_i = a/a_i$ for $i=0,1$.  Now factor $F(X_0,X_1)/X_1^{b_1k}$ 
and remultiply with $X_1^{b_1k}$ to obtain 
\[
F(X_0, X_1) = c \cdot X_1^{b_1 \ell}\cdot \prod_{i=1}^{ k-\ell} \left( X_0^{b_0} - t_i X_1^{b_1} \right)  
\]
for some $\ell \leq k$, some $t_i \in \overline{\FF}_q$ and some leading 
coefficient $c \in \Fq^\ast$.  Each factor for which $t_i \in \Fq$ has a
unique $\Fq$-rational zero in $\PP(a_0,a_1)$. Indeed, to see this it suffices
to show that such a factor has exactly $q - 1$ solutions 
$(X_0,X_1) \in \Fq^2 \setminus \{(0,0) \}$, which easily follows from the 
coprimality of $b_1, b_2$; see also Lemma~\ref{bivariatecountlemma} below. On
the other hand, a factor for which $t_i \notin \Fq$ clearly cannot have any 
$\Fq$-rational zeroes. This shows that $e_q(d;a_0,a_1)=  k= d/a$ for $d\leq q+1$. 
\end{example}

\noindent 
In Section~\ref{section_preliminary} we will generalize the class of 
polynomials~\eqref{serresinequalitygen} to a larger family which shows that the
inequality may be strict if $m > 1$. We prudently conjecture that the actual 
value of $e_q(d;a_0,a_1,\ldots,a_m)$ is always attained by one of these 
generalizations (as soon as it is defined), but elaborating this into a concrete
statement amounts to tedious additive number theory and is omitted.

One assumption that simplifies the combinatorics is 
$\lcm(a_0,a_1, \ldots, a_m) \mid d$; in what follows we will usually suppose 
that this is the case. Another hypothesis which turns out to simplify things 
significantly is that one of the weights (say $a_0$) equals $1$. Under these
assumptions, we conjecture:

\begin{conjecture} 
\label{serreimplicitconjecture}
\emph{
If $a_0 = 1$ and $\lcm(a_1,a_2, \ldots, a_m) \mid d$, then the bound from 
Lemma~\textup{\ref{serrelowerbound}} is sharp.  In other words, if we order the weights
such that $a_1 \leq a_2 \leq \ldots \leq a_m$, then
\[ 
e_q(d;1,a_1,a_2,\ldots,a_m) = \min \left\{ p_m,  \frac{d}{a_1} q^{m-1} + p_{m-2} \right\}. 
\]
}
\end{conjecture}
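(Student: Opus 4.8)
The lower bound is already Lemma~\ref{serrelowerbound}: since $a_0=1$ and $a_1\le\cdots\le a_m$ we have $\min_{r<s}\lcm(a_r,a_s)=\lcm(a_0,a_1)=a_1$, and $a_1\mid d$ follows from $\lcm(a_1,\dots,a_m)\mid d$. So the content is the matching \emph{upper} bound: every nonzero $F\in\sansS_d$ has at most $\frac{d}{a_1}q^{m-1}+p_{m-2}$ zeros. We may assume the nontrivial range $d/a_1\le q$, since otherwise the claimed value is $\ge p_m$ and there is nothing to prove. The plan is to induct on $m$, the base case $m=1$ being the Example above. For the inductive step I would exploit the morphism
\[
\Phi\colon \PP(1,a_1,\dots,a_m)\dashrightarrow\PP^1,\qquad (x_0:\cdots:x_m)\mapsto (x_0^{a_1}:x_1),
\]
which is well defined away from the base locus $B=\{x_0=x_1=0\}\cong\PP(a_2,\dots,a_m)$, as one checks directly from the weighted scaling.

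Over $(\alpha:\beta)\in\PP^1(\Fq)$ the fibre of $\Phi$ is the generalized hyperplane $H_{(\alpha:\beta)}=V(\beta X_0^{a_1}-\alpha X_1)$, and the crucial point is that each affine fibre $H_{(1:\beta)}$ ($\beta\in\Fq$) is isomorphic, via $(x_0:x_2:\cdots:x_m)\mapsto(x_0:\beta x_0^{a_1}:x_2:\cdots:x_m)$, to $\PP(1,a_2,\dots,a_m)$, which again carries a weight equal to $1$; moreover $\lcm(a_2,\dots,a_m)\mid d$. Hence the inductive hypothesis applies to the restriction $G_\beta:=F(X_0,\beta X_0^{a_1},X_2,\dots,X_m)$ whenever $G_\beta\ne 0$, and since $a_2\ge a_1$ it gives $\card{V(G_\beta)}\le \frac{d}{a_2}q^{m-2}+p_{m-3}\le \frac{d}{a_1}q^{m-2}+p_{m-3}$. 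I would then double count over the $q$ affine fibres: every point of $V(F)$ with $x_0\ne 0$ lies in exactly one of them, the base locus $B$ lies in all of them, and a fibre with $G_\beta\equiv 0$ is contained in $V(F)$ and contributes its full count $q^{m-1}+p_{m-2}$. The number $s$ of fully contained fibres is at most $d/a_1$, because $G_\beta\equiv 0$ is equivalent to divisibility $(X_1-\beta X_0^{a_1})\mid F$, and these $s$ pairwise non-associate factors of weighted degree $a_1$ together force $s\,a_1\le d$.

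Summing the fibre estimates produces the expected shape: the roughly $q$ generic fibres yield the main term $\frac{d}{a_1}q^{m-1}$ (this is where the ordering $a_1\le a_2$ is used), while the common base locus $B\cong\PP(a_2,\dots,a_m)$ supplies the additive $p_{m-2}$. So the skeleton of the argument, together with the correct base case and the divisibility bound $s\le d/a_1$, reproduces the conjectured quantity.

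The hard part is twofold, and is exactly where the difficulty of the conjecture sits. First, the special fibre $H_{(0:1)}=\{x_0=0\}$ equals $\PP(a_1,\dots,a_m)$, a weighted projective space with \emph{no} weight equal to~$1$; the inductive hypothesis fails there, and one cannot simply bound $\card{V(F)\cap\{x_0=0\}}$ by a Serre-type expression, since as noted after the Example the inequality of Lemma~\ref{serrelowerbound} may be strict once $m>1$. Second, the naive fibrewise double count is \emph{not tight}: on the extremal family of Lemma~\ref{serrelowerbound} the non-contained restrictions $G_\beta$ are all proportional to a power of $X_0$, so the inductive estimate $\frac{d}{a_2}q^{m-2}+p_{m-3}$ overshoots their true value $p_{m-2}$, and the accumulated slack is enough to destroy the sharpness required to reach $\frac{d}{a_1}q^{m-1}+p_{m-2}$. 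I would therefore not bound the affine and infinity contributions independently, but couple them: after splitting off the $s$ contained factors, write $F(0,X_1,\dots,X_m)=\pm X_1^{s}\widetilde F_\infty$ and induct on the residual degree $d-sa_1$, using that on each generic fibre $F$ is likewise constrained at that fibre's own infinity. Converting this coupling into an estimate that remains tight on the extremal configuration is, I expect, the genuine obstacle, and it is plausibly the reason the statement is presently only conjectural.
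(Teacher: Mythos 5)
First, a point of framing: the statement you are proving is Conjecture~\ref{serreimplicitconjecture}, which the paper itself does \emph{not} prove; it establishes only the lower bound (Lemma~\ref{serrelowerbound}) and the cases $m\le 2$ (Theorem~\ref{serreforplanes}), and the general case is open. So there is no full proof in the paper to compare against, and your proposal, consistently, is a strategy sketch that you yourself flag as incomplete. What you do write is sound: the reduction of the lower bound to Lemma~\ref{serrelowerbound} (with $\min_{r<s}\lcm(a_r,a_s)=a_1$ when $a_0=1$), the fibration of $\PP(1,a_1,\ldots,a_m)$ by the vertical pencil $V(X_1-\beta X_0^{a_1})$ with fibres isomorphic to $\PP(1,a_2,\ldots,a_m)$, and the bound $s\le d/a_1$ on the number of contained fibres. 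Your two diagnosed obstacles are also genuine: (a) the locus $x_0=0$ is $\PP(a_1,\ldots,a_m)$, which carries no weight $1$, so no inductive hypothesis applies there; and (b) summing the inductive bound $\frac{d}{a_2}q^{m-2}+p_{m-3}$ over roughly $q$ fibres accumulates slack of order $q^{m-1}$, which destroys sharpness (as your extremal example $F=\prod_i(X_1-\beta_iX_0^{a_1})$ shows, where every non-contained fibre actually contributes only $p_{m-2}$). The gap, then, is that the argument stops exactly where the conjecture starts; nothing written is wrong, but nothing beyond the already-known lower bound is proved.

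It is instructive to compare with how the paper settles the one nontrivial case it does prove, $m=2$, because its method differs from yours at precisely the two sticking points. The paper does not partition $V(F)$ into fibres and add upper bounds. Instead it defines three types of $\Fq$-rational ``lines'' in $\PP(1,a_1,a_2)$ (Definition~\ref{defline}): the line at infinity, the vertical lines $\alpha X_0^{a_1}+X_1=0$ (your fibres), and crucially also the non-vertical lines $\alpha X_0^{a_2}+\beta X_1X_0^{a_2-a_1}+X_2=0$; it proves these behave like classical lines (Lemma~\ref{linepropertieslemma}), and then runs Serre's incidence double count over the pencil of \emph{all} lines through a single point $P\in V(F)$ chosen off the union $L$ of linear factors of $F$. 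This choice is what defuses your obstacle (b): since $P\notin L$, no line through $P$ is contained in $V(F)$, so every restriction is a nonzero polynomial subject to the $m=1$ bound, and each zero of $F$ is counted at least once via a line joining it to $P$ --- the inequality runs in the profitable direction instead of accumulating per-fibre slack. When no such $P$ exists (Case~1), $V(F)$ is essentially a union of at most $d/a_1$ lines and is bounded directly by $tq+1$. Your obstacle (a) is handled there only because $m=2$: the infinity locus is the weighted projective \emph{line} $\PP(a_1,a_2)$, for which the $d/(a_1a_2)$ bound holds with no weight equal to $1$. For $m\ge 3$ the infinity locus is a weighted projective space all of whose weights exceed $1$, and neither your fibration nor the paper's line method currently says anything about it --- which is exactly why the statement remains conjectural.
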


\noindent This immediately specializes to Serre's bound for $a_1 = \ldots = a_m = 1$.
The right-hand side equals $\frac{d}{a_1} q^{m-1} + p_{m-2}$ if and only if
$d \leq a_1(q + 1)$, which will be assumed in practice because the other case 
is again easy to handle. 

In the statement of Conjecture~\ref{serreimplicitconjecture} it can be assumed 
without loss of generality that $\gcd(a_1, a_2, \ldots, a_m) = 1$. This follows 
from Delorme weight reduction~\cite{Delorme}, which states that for any index 
$i$ and any positive integer $b$ coprime to $a_i$, 
\[
\PP(a_0b, \dots,a_{i-1}b, a_i, a_{i+1}b, \dots, a_mb) \cong \PP(a_0, a_1, \dots, a_m),
\]
the underlying observation being that an 
$(a_0b, \dots,a_{i-1}b, a_i, a_{i+1}b, \dots, a_mb)$-weighted homogeneous
polynomial of degree $d = kb$ (with $k$ some integer) can be easily transformed 
into an $(a_0b, \dots,a_{i-1}b, a_ib, a_{i+1}b, \dots, a_mb)$-weighted
homogeneous polynomial of the same degree, by replacing each occurrence of 
$X_i^b$ by~$X_i$. A rescaling of the weights then allows us to view this as an 
$(a_0,a_1, \ldots, a_m)$-weighted homogeneous degree $k$ polynomial. See the 
treatments in \cite[\S3.3]{Hos}, \cite[\S3.6]{R}, \cite[\S1]{dolgachev} for
more details.  For our needs, the relevant observation is that there is a 
one-to-one correspondence between the respective $\Fq$-rational zeroes given
by
\[ 
(\alpha_0 : \ldots : \alpha_{i-1} : \alpha_i : \alpha_{i+1} : \ldots : \alpha_m ) 
\ \mapsto 
\ (\alpha_0 : \ldots : \alpha_{i-1} : \alpha_i^b : \alpha_{i+1} : \ldots : \alpha_m ).
\]
In particular the Delorme isomorphism respects 
Conjecture~\ref{serreimplicitconjecture} in the sense that  
$ e_q(db;1,a_1b,a_2b,\ldots,a_mb)$ and $e_q(d;1,a_1,a_2,\ldots,a_m)$ have the
same value.

For $m=1$, the validity of Conjecture~\ref{serreimplicitconjecture} follows
from the example discussed above; we note that alternatively this example could
have been settled by reducing to the case of $\PP^1(1,1)$ using Delorme weight
reduction (preceded by a rescaling of the weights if needed to ensure that 
$\gcd(a_0,a_1)=1$). In Section~\ref{section_mainsection} we give further 
evidence in favour of Conjecture~\ref{serreimplicitconjecture}:

\begin{theorem}
\label{serreforplanes}
Conjecture~\textup{\ref{serreimplicitconjecture}} is true if $m \leq 2$.
\end{theorem}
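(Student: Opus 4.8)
The cases $m=0$ and $m=1$ being settled (the latter by the Example above), the content of the theorem is the case $m=2$, on which I concentrate. By Delorme weight reduction I may assume $\gcd(a_1,a_2)=1$, and after ordering $a_1\le a_2$ the subcase $a_1=a_2$ forces $a_1=a_2=1$, i.e.\ the classical projective plane, where the statement is exactly Serre's bound \eqref{serresinequality}. So I assume $1\le a_1<a_2$, $\gcd(a_1,a_2)=1$, $\lcm(a_1,a_2)=a_1a_2\mid d$, and write $d=ka_1a_2$; by the remark following Conjecture~\ref{serreimplicitconjecture} I may also assume $d\le a_1(q+1)$, i.e.\ $ka_2\le q+1$. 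Since the lower bound is Lemma~\ref{serrelowerbound}, it remains to prove $\card{V(F)}\le ka_2q+1=\frac{d}{a_1}q+1$ for every nonzero $F\in\sansS_d$.

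First I would split $\PP(1,a_1,a_2)(\Fq)$ into the affine chart $\{X_0=1\}\cong\AA^2$ and the hyperplane at infinity $\{X_0=0\}\cong\PP(a_1,a_2)\cong\PP^1$, writing $f=F(1,X_1,X_2)$ and letting $\bar F=F(0,X_1,X_2)$ be the weighted-leading form. Using the factorization from the Example, $\bar F=cX_2^{a_1\ell}\prod_{i=1}^{k-\ell}(X_1^{a_2}-t_iX_2^{a_1})$, so that $V_\infty:=V(F)\cap\{X_0=0\}$ consists of one point per distinct $t_i\in\Fq^\ast$, together with $(0:0:1)$ when some $t_i=0$ and $(0:1:0)$ when $\ell\ge1$; in particular $\card{V_\infty}\le k$. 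For the affine points I would project along $\pi\colon(x_0:x_1:x_2)\mapsto(x_0:x_2)$ to $\PP(1,a_2)\cong\PP^1$, whose fibres are the $X_1$-lines: over each $(1:u)$ with $u\in\Fq$ the zeros of $f(X_1,u)$ number at most $D:=\deg_{X_1}f=\deg_{X_1}F\le ka_2$, unless the whole affine fibre lies in $V(F)$. Writing $f=P(X_2)\tilde f$ with $P$ the content in $X_1$, the number $N$ of such full fibres is at most the number of $\Fq$-roots of $P$, and inspecting the monomial of largest $X_1$-degree gives $a_1D+a_2\deg P\le d$, whence $N\le a_1(k-D/a_2)$. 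A short convexity computation (the relevant function is convex in $D$ with values $ka_1q$ and $ka_2q$ at the endpoints) then yields the clean affine bound $\card{V(F)\cap\AA^2}\le\Phi_0(D):=qD+a_1(k-D/a_2)(q-D)\le ka_2q$.

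The decisive step is to combine this with $\card{V_\infty}$. Setting $\ell'=k-D/a_2\ge0$, the slack in the affine bound is exactly
\[
ka_2q-\Phi_0(D)=\ell'\bigl(q(a_2-a_1)+a_1a_2(k-\ell')\bigr),
\]
which, whenever $D<ka_2$ (so $\ell'\ge1/a_2$), is at least $q/a_2\ge k-1\ge\card{V_\infty}-1$, the middle inequality coming from $ka_2\le q+1$. Hence $\card{V(F)}=\card{V(F)\cap\AA^2}+\card{V_\infty}\le\Phi_0(D)+\card{V_\infty}\le ka_2q+1$ throughout this range, and the entire theorem reduces to the single boundary case $D=ka_2$.

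In that case $X_1^{ka_2}$ occurs in $F$, forcing $\ell=0$ and $\bar F=c\prod_{i=1}^{k}(X_1^{a_2}-t_iX_2^{a_1})$; the affine bound degenerates to the sharp $\card{V(F)\cap\AA^2}\le ka_2q$ with no slack, and what must be established is the deficiency estimate $\card{V(F)\cap\AA^2}\le ka_2q-(\card{V_\infty}-1)$, i.e.\ that every rational point at infinity beyond the first costs at least one missing rational root among the $q$ fibre polynomials $f(X_1,u)$. This is the main obstacle. Geometrically, $V(F)$ breaks into $ka_2$ "sheets" which, as in the extremal example $F=\prod_i(\alpha_iX_0^{a_1}-\beta_iX_1)$, may each meet $\AA^2$ in a full $X_1$-line but must then be glued together at the points of $V_\infty$, and the elementary bookkeeping of this gluing is precisely what produces the global ``$+1$''. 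I would attack it through the $\Fq$-factorization of $f$: the weighted-leading form of each irreducible factor is a product of the pairwise distinct binomials $X_1^{a_2}-t_iX_2^{a_1}$ (absolutely irreducible for $t_i\ne0$ since $\gcd(a_1,a_2)=1$) and of $X_1$, and the observation that a product of polynomial graphs $X_1=\phi_j(X_2)$ with $\phi_j\in\Fq[X_2]$ always has a \emph{monomial} leading form shows that the presence of a nonzero $t_i$ obstructs the simultaneous complete splitting of all the $f(X_1,u)$. Converting this qualitative obstruction into the precise bound $\ge\card{V_\infty}-1$, uniformly over the possible configurations of $(0:0:1)$, $(0:1:0)$ and the rational $t_i$, is where the real work of the argument lies.
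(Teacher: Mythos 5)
Your affine analysis is correct as far as it goes (the fibre bound, the content estimate $a_1D+a_2\deg P\le d$, the convexity argument, and the slack computation for $D<ka_2$ all check out), but the proposal is not a proof: the boundary case $D=ka_2$, to which you correctly reduce the whole theorem, is left open, and that case is precisely where the theorem lives. Every extremal configuration --- e.g.\ $F=\prod_{i=1}^{ka_2}(\alpha_iX_0^{a_1}-\beta_iX_1)$, which attains $\frac{d}{a_1}q+1$ --- has $\deg_{X_1}f=ka_2$, so your slack vanishes identically there and the entire content of the statement is concentrated in the deficiency estimate $\card{V(F)\cap\AA^2}\le ka_2q-(\card{V_\infty}-1)$. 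Your sketch for it (products of polynomial graphs have monomial leading forms, so a nonzero $t_i$ ``obstructs complete splitting'') does not convert into the needed quantitative bound: near-complete splitting of the $q$ fibre polynomials $f(X_1,u)$ does not force $f$ to factor into graphs over $\Fq$, since roots may vary non-polynomially with $u$, and you yourself concede that making this uniform over the configurations of $(0:0:1)$, $(0:1:0)$ and the rational $t_i$ is ``where the real work of the argument lies.'' An acknowledged missing step of this size is a gap, not a proof. The paper closes exactly this difficulty by a different mechanism: it defines $\Fq$-rational lines in $\PP(1,a_1,a_2)$ (the line at infinity, vertical lines $\alpha X_0^{a_1}+X_1$, non-vertical lines $\alpha X_0^{a_2}+\beta X_1X_0^{a_2-a_1}+X_2$), shows each has $q+1$ rational points and any two meet, splits off the at most $d/a_1$ linear factors of $F$, and then double-counts incidences through a zero of $F$ lying on no linear factor, bounding $\card{V(F)\cap H}$ for each line $H$ via the one-dimensional cases $\PP(1,a_1)$, $\PP(1,a_2)$, $\PP(a_1,a_2)$; no analogue of your boundary-case estimate is ever needed.

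Two smaller gaps also need attention. First, you implicitly assume $\bar F\not\equiv 0$: if $X_0\mid F$ then the whole line at infinity lies in $V(F)$, so $\card{V_\infty}=q+1$ rather than $\le k$, and your slack need not absorb it --- for $a_1=1$, $a_2=2$, $D=2k-1$ the slack is $q/2+k-1/2$, which is $<q$ whenever $2k<q+1$ --- so this case requires a separate argument (peeling off the $X_0$ factors and redoing the degree bookkeeping). Second, the estimate $qD+N(q-D)$ for the affine count tacitly assumes $D\le q$; this is harmless given $D\le ka_2\le q+1$ but should be stated. Neither of these is fatal; the unresolved case $D=ka_2$ is.
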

The proof for $m=2$ is done by mimicking Serre's original method. In order to
do so, our main task is to come up with a convenient notion of `lines' inside 
the weighted projective plane, which is not obvious a priori. The handy 
property of $\PP(1,a_1,a_2)$ is that it naturally arises as a completion of the 
affine plane $\AA^2$, which leads us to consider completed affine lines; as we
will see, these indeed allow for a working version of Serre's proof. Even
though $\PP(1,a_1,a_2)$ is a very particular case, we hope that our approach has
the ingredients needed to establish Conjecture~\ref{serreimplicitconjecture} in
full generality. 

Finally, in Section~\ref{section_WPRM}, we introduce the natural weighted 
analogue of projective Reed--Muller codes, reinterpret 
Conjecture~\ref{serreimplicitconjecture} in terms of the minimal distance, and 
examine some further first properties. These codes do not seem to have seen 
previous study, even though a different notion bearing the name 
`weighted projective Reed--Muller codes' was introduced and analyzed by 
S{\o}rensen~\cite{sorensen}. As noted earlier, an appendix giving a formal 
introduction to weighted projective spaces and many of its geometric aspects is
provided at the end. 

\section{Polynomials with many zeros} 
\label{section_preliminary}

In this section we generalize the class of polynomials considered 
in~\eqref{serresinequalitygen}. As before, let $\sansS$ denote the polynomial
ring $\Fq[X_0, X_1, \ldots, X_m]$. Fix a grading on $\sansS$ with respect 
to weights $\w = (a_0, a_1, \dots , a_m)$ so that $\deg X_i = a_i \ge 1$ 
($0\le i \le m$), and for a monomial $M= X_0^{i_0}X_1^{i_1} \cdots X_m^{i_m}$,
the (weighted) degree of $M$ is $\deg M = i_0a_0+i_1a_1+ \dots + i_ma_m$.  We 
now define a useful notion about pairs of monomials in~$\sansS$. 

\begin{definition}
Let $M_0,M_1 \in \sansS$ be monomials different from $1$. If
\begin{itemize} 
\item $\deg M_0 = \deg M_1$,
\item $\gcd(M_0,M_1) = 1$, i.e.\ no variables appear in both $M_0$ and $M_1$,
\item $\gcd(\text{exponents appearing in the monomial $M_0 M_1$}) = 1$,
\end{itemize}
then we call $( M_0,M_1)$ a \emph{primitive pair}. Denoting by $s_i$ ($i=0,1$) 
the number of distinct variables appearing in $M_i$, we call $(s_0,s_1)$ the 
corresponding \emph{signature}.
\end{definition} 

\begin{example}
For $\PP(2,3,5)$, the pairs $( X_0X_1, X_2 )$, $( X_0^3, X_1^2 )$ are primitive 
of degrees $5$, $6$ and signatures $(2,1)$, $(1,1)$, respectively.
\end{example}

Our generalized class consists of weighted homogeneous polynomials of the form
\begin{equation} 
\label{serrenewexample}
F_{\ell,s_0,s_1,\sigma_0,\sigma_1} = \mu_0 \mu_1 \prod_{i=1}^\ell (M_0 - t_i M_1)  
\end{equation}
where $1 \leq s_0 \geq \sigma_0 \geq 0$, $1 \leq s_1 \geq \sigma_1 \geq 0$
are integers and
\begin{itemize}
\item $(M_0, M_1)$ is a primitive pair of signature $(s_0,s_1)$,
\item $t_1,  \ldots, t_\ell$ are distinct elements of $\Fq^\ast$ 
      (in particular $0 \leq \ell \leq q-1$),
\item the (possibly trivial) monomial $\mu_i$ ($i=0,1$) is only divisible by 
      variables that also appear in $M_i$; more precisely it is divisible by 
      $\sigma_i \leq s_i$ such variables.
\end{itemize}
It is allowed that $\ell = 0$, but in that case we assume that $\sigma_0 = s_0$
and $\sigma_1 = s_1$. In this case $F$ is just a monomial in at least two
variables. Strictly speaking, since we assumed that $s_0\ge1$ and $s_1\ge 1$, 
monomials in one variable (or $F=1$) are not covered by the construction, but 
in order to have a chance of meeting $e_q(d;a_0,a_1,\ldots, a_m)$ for every 
value of $d$ one should include them; since this is speculative anyway, we 
omit a further discussion of such pathologies.

The construction indeed concerns a generalization of~\eqref{serresinequalitygen}:
modulo scaling, the polynomial 
\[
\prod_{i=1}^{d/a_{rs}} (\alpha_i X_r^{a_{rs}/a_r} - \beta_i X_s^{a_{rs}/a_s})
\]
is of the form $F_{d/a_{rs} - \sigma_0 -\sigma_1,1,1,\sigma_0,\sigma_1}$ with 
$\sigma_0, \sigma_1 \in \{0,1\}$, depending on whether $(1:0)$ or $(0:1)$ are
among the points $(\alpha_i : \beta_i)$. Here the underlying primitive pair is 
$(X_r^{a_{rs}/a_r}, \;  X_s^{a_{rs}/a_s})$. 

Of course the polynomial $F_{\ell,s_0,s_1,\sigma_0,\sigma_1}$ is not uniquely 
determined by the integers $\ell,s_0,s_1,\sigma_0,\sigma_1$, but these are the
parameters accounting for the number of $\Fq$-rational points at which it 
vanishes: 

\begin{lemma} 
\label{pointcountinglemma}
$ \card{V(F_{\ell,s_0,s_1,\sigma_0,\sigma_1})} = \lambda q^{m + 1 -s_0-s_1} +  p_{m-s_0 - s_1}$
where
\begin{align*}
 \lambda  & =  \ell \cdot (q-1)^{s_0+s_1 - 2} \\
 &\qquad + \, \left[ (q^{s_0} - (q-1)^{s_0}) (q^{s_1} - (q-1)^{s_1}) - 1 \right] / (q-1) \\
 &\qquad + \,  (q-1)^{s_1 - 1} q^{s_0 - \sigma_0} (q^{\sigma_0} - (q-1)^{\sigma_0}) \\
 & \qquad + \, (q-1)^{s_0 - 1} q^{s_1 - \sigma_1} (q^{\sigma_1} - (q-1)^{\sigma_1}).
\end{align*}
\end{lemma}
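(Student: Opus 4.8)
The plan is to reduce the projective count to an affine one and then stratify the affine solution set. Because $F$ is weighted homogeneous, the condition $F=0$ depends only on the class of a point in $\PP(a_0,\ldots,a_m)$, so by the fact recalled in the introduction that every $\Fq$-rational point has exactly $q-1$ representatives in $\Fq^{m+1}\setminus\{0\}$ (see \cite[\S3]{perret1}), it suffices to compute $N:=\card{\{x\in\Fq^{m+1}\setminus\{0\}:F(x)=0\}}$ and divide by $q-1$. I would partition the $m+1$ variables into the set $A$ of $s_0$ variables occurring in $M_0$, the set $B$ of $s_1$ variables occurring in $M_1$ (disjoint from $A$ by primitivity), and the remaining $m+1-s_0-s_1$ variables. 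Since neither $M_0,M_1,\mu_0$ nor $\mu_1$ involves a variable of the last group, those coordinates are completely free and contribute a common factor $q^{m+1-s_0-s_1}$ to every count below; the constraint $x\neq 0$ serves only to delete the single all-zero vector at the very end.

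Next I would stratify the coordinates $(x_A,x_B)$ according to whether the monomials $M_0$ and $M_1$ vanish, using that a monomial vanishes precisely when one of its variables is $0$. This yields four disjoint strata. On the stratum where both $M_0$ and $M_1$ vanish, every factor $M_0-t_iM_1$ is zero, so $F$ vanishes identically; the number of such $(x_A,x_B)$ is $(q^{s_0}-(q-1)^{s_0})(q^{s_1}-(q-1)^{s_1})$. On the stratum $M_0=0$, $M_1\neq 0$, the product $\prod_i(M_0-t_iM_1)$ and the monomial $\mu_1$ are both nonzero, so $F=0$ if and only if $\mu_0=0$; demanding that one of the $\sigma_0$ variables dividing $\mu_0$ vanish while all $B$-variables stay nonzero (the remaining $A$-variables being free) gives the count $(q-1)^{s_1}\,q^{s_0-\sigma_0}(q^{\sigma_0}-(q-1)^{\sigma_0})$. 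The stratum $M_0\neq 0$, $M_1=0$ is handled symmetrically, producing the term with $\sigma_1$.

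The main step, and the only place where primitivity is genuinely used, is the stratum where $M_0\neq 0$ and $M_1\neq 0$, i.e.\ all $A$- and $B$-variables are nonzero. Here $\mu_0,\mu_1\neq 0$ automatically, so $F=0$ if and only if $M_0(x_A)M_1(x_B)^{-1}=t_i$ for some $i$. The assignment $(x_A,x_B)\mapsto M_0(x_A)M_1(x_B)^{-1}$ is a group homomorphism $(\Fq^\ast)^{s_0+s_1}\to\Fq^\ast$, whose image is the subgroup of $\Fq^\ast$ generated by the powers $g^{e}$ as $e$ ranges over the exponents appearing in $M_0M_1$ (with $g$ a generator of $\Fq^\ast$); primitivity, namely that these exponents have gcd $1$, forces this homomorphism to be surjective. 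Hence each value $t\in\Fq^\ast$ is attained on a fibre of size exactly $(q-1)^{s_0+s_1-1}$, and since the $t_i$ are $\ell$ distinct elements of $\Fq^\ast$ the count on this stratum is $\ell(q-1)^{s_0+s_1-1}$. I expect this equidistribution to be the only nontrivial point; everything else is elementary inclusion--exclusion, the one subtlety being to remember the excluded zero vector.

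Finally I would assemble the pieces. Multiplying each of the four stratum counts by the free factor $q^{m+1-s_0-s_1}$ and subtracting $1$ for the zero vector (which lies in the ``both vanish'' stratum) gives $N$ as $q^{m+1-s_0-s_1}$ times the sum $S$ of the four stratum counts, minus $1$. Dividing by $q-1$ and using the identity $(q^{m+1-s_0-s_1}-1)/(q-1)=p_{m-s_0-s_1}$ yields $\card{V(F)}=\lambda\,q^{m+1-s_0-s_1}+p_{m-s_0-s_1}$ with $\lambda=(S-1)/(q-1)$; distributing the $-1$ into the ``both vanish'' count reproduces the four displayed summands of $\lambda$ (the first from the stratum with both monomials nonzero, the second from the ``both vanish'' stratum after absorbing the $-1$, and the last two from the mixed strata). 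Matching these against the claimed formula for $\lambda$ is then immediate and completes the proof.
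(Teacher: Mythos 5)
Your proof is correct, and its core --- the four-way stratification according to whether $M_0$ and $M_1$ vanish, with the variables outside $M_0M_1$ contributing a free factor of $q^{m+1-s_0-s_1}$ --- is exactly the paper's decomposition into its cases (i)--(iv), with matching stratum counts. The genuine difference lies in the treatment of the key stratum where all variables of $M_0M_1$ are nonzero. The paper isolates this step as a standalone statement (Lemma~\ref{bivariatecountlemma}) and proves it by completing the exponent vector, primitive by hypothesis, to the first row of a matrix in $\GL_{s_0+s_1}(\ZZ)$, which yields a monomial automorphism of the torus carrying each equation $M_0 - t_i M_1 = 0$ to the form $x_1 = \mathrm{const}$; you instead observe that $(x_A,x_B) \mapsto M_0(x_A)M_1(x_B)^{-1}$ is a group homomorphism $(\Fq^\ast)^{s_0+s_1} \to \Fq^\ast$, surjective precisely because the exponents have gcd $1$, so that every fibre has size $(q-1)^{s_0+s_1-1}$. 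Both arguments consume primitivity at the same point, but yours is more self-contained (it avoids the unimodular-completion fact the paper cites), whereas the paper's change of variables gives slightly more, namely an explicit torus automorphism identifying the solution set with a coordinate slice. The remaining discrepancy is pure bookkeeping: the paper first splits off the $p_{m-s_0-s_1}$ rational points where all variables of $M_0M_1$ vanish and then divides by $q-1$ stratum by stratum, while you perform a single global affine count, subtract $1$ for the origin, and divide once at the end; the identity you invoke, $\bigl(Sq^{m+1-s_0-s_1}-1\bigr)/(q-1) = \lambda q^{m+1-s_0-s_1} + p_{m-s_0-s_1}$ with $\lambda = (S-1)/(q-1)$, is correct and reproduces the displayed formula exactly.
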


In order to prove this, let us denote the variables appearing in $M_0$ and 
$M_1$ by $Y_1, Y_2, \ldots, Y_{s_0}$ and $Z_1, Z_2, \ldots, Z_{s_1}$,
respectively. These are distinct because of the primitivity of the pair 
$(M_0,M_1)$. The points at which all these variables vanish have the structure 
of a weighted projective space of dimension $m-s_0-s_1$. Since there are 
$p_{m-s_0-s_1}$ such points which are $\Fq$-rational, our task easily reduces
to the case where $s_0 + s_1 = m + 1$, meaning that each of the variables 
$X_0, X_1, \ldots, X_m$ appears among the $Y_i$ or $Z_i$. In the latter case we
need to show that $\card{V(F_{\ell,s_0,s_1,\sigma_0,\sigma_1})} =  \lambda$.  We 
claim that, respectively, the summands in the statement of
Lemma~\ref{pointcountinglemma} correspond to

\begin{enumerate}[label=(\roman{enumi}), leftmargin=*]
\item the zeros all of whose coordinates are nonzero,
\item the zeros for which at least one of the $Y_i$'s is zero 
      and at least one of the $Z_i$'s is zero,
\item the zeros for which at least one of the $Y_i$'s is zero, 
      but none of the $Z_i$'s is,
\item the zeros for which at least one of the $Z_i$'s is zero, 
      but none of the $Y_i$'s is.
\end{enumerate}

As for (i), this immediately follows from the lemma below, along with the
primitivity of $(M_0,M_1)$ and the fact that every $\Fq$-rational weighted 
projective point has exactly $q-1$ rational representatives by~\cite[\S3]{perret1}.

\begin{lemma} 
\label{bivariatecountlemma}
Let $a_1, a_2, \ldots, a_{s_0},b_1,b_2, \ldots, b_{s_1}$ be mutually coprime 
integers and let $\alpha, \beta \in \Fq^\ast$. Then the number of solutions in
the torus $\TT_q^{s_0 + s_1}(\Fq) := (\Fq^\ast)^{s_0 + s_1}$ of the equation
\[ 
\alpha x_1^{a_1}x_2^{a_2} \cdots x_{s_0}^{a_{s_0}} - 
 \beta y_1^{b_1}y_2^{b_2} \cdots y_{s_1}^{b_{s_1}} = 0
\]
is given by $(q-1)^{s_0 + s_1 - 1}$.
\end{lemma}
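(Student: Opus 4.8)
The plan is to recognize the equation as prescribing the value of a monomial homomorphism on the torus, and then to count via the coset decomposition of its kernel. Rewriting the equation multiplicatively, a tuple $(x_1, \ldots, x_{s_0}, y_1, \ldots, y_{s_1})$ in the torus $\TT_q^{s_0+s_1}(\Fq)$ is a solution precisely when
\[
\phi(x_1, \ldots, x_{s_0}, y_1, \ldots, y_{s_1}) := x_1^{a_1} \cdots x_{s_0}^{a_{s_0}} \, y_1^{-b_1} \cdots y_{s_1}^{-b_{s_1}} = \frac{\beta}{\alpha}.
\]
The map $\phi$ is a homomorphism of abelian groups from $(\Fq^\ast)^{s_0+s_1}$ to $\Fq^\ast$, so the solution set is exactly the fiber $\phi^{-1}(\beta/\alpha)$, and it suffices to determine its cardinality.

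First I would establish that $\phi$ is surjective. Fixing a generator $g$ of the cyclic group $\Fq^\ast$ and writing each coordinate as a power of $g$, the map $\phi$ becomes, on exponents, the map $\ZZ^{s_0+s_1} \to \ZZ/(q-1)\ZZ$ sending $(u_i, v_j)$ to $\sum_i a_i u_i - \sum_j b_j v_j \bmod (q-1)$. Its image is the reduction of the subgroup of $\ZZ$ generated by $a_1, \ldots, a_{s_0}, b_1, \ldots, b_{s_1}$, which equals $\ZZ$ since $\gcd(a_1, \ldots, a_{s_0}, b_1, \ldots, b_{s_1}) = 1$ by hypothesis. Hence $\phi$ is onto $\Fq^\ast$.

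Then the counting is immediate: $\phi$ is a surjective homomorphism from a group of order $(q-1)^{s_0+s_1}$ onto a group of order $q-1$, so its kernel has order $(q-1)^{s_0+s_1-1}$, and every fiber --- being a coset of the kernel --- has the same cardinality. In particular $\card{\phi^{-1}(\beta/\alpha)} = (q-1)^{s_0+s_1-1}$, as claimed.

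The only substantive step is the surjectivity of $\phi$, and this is where the coprimality enters essentially: without it the image would be a proper subgroup of $\Fq^\ast$ and the relevant fiber could even be empty, so the count would depend on $\alpha$ and $\beta$. Once surjectivity is secured, the rest is formal --- the coset structure of $\ker \phi$ does all the work, and no case analysis on $\alpha, \beta$ is needed, since all fibers of a surjective group homomorphism are equinumerous.
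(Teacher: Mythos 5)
Your proof is correct, but it follows a genuinely different route from the paper's. The paper rewrites the equation as $x_1^{a_1}\cdots x_{s_0}^{a_{s_0}} y_1^{-b_1}\cdots y_{s_1}^{-b_{s_1}} = \alpha^{-1}\beta$ just as you do, but then invokes the fact that a vector of integers with gcd $1$ can be completed to a matrix $M \in \GL_{s_0+s_1}(\ZZ)$ (citing an external reference), and uses $M$ to build a monomial transformation --- an invertible substitution of the variables by Laurent monomials, hence a bijection of the torus $\TT^{s_0+s_1}(\Fq)$ onto itself --- carrying the equation to $x_1 = \alpha^{-1}\beta$, whose solution count $(q-1)^{s_0+s_1-1}$ is immediate. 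You instead treat the left-hand side as a group homomorphism $\phi\colon (\Fq^\ast)^{s_0+s_1} \to \Fq^\ast$, prove surjectivity by passing to exponents with respect to a generator of the cyclic group $\Fq^\ast$ (this is where $\gcd = 1$ enters), and count the fiber as a coset of $\ker\phi$. The two arguments hinge on the same arithmetic fact but package it differently: your version is more elementary and self-contained, needing only the cyclicity of $\Fq^\ast$ and the first isomorphism theorem rather than the unimodular-completion result, and it makes transparent that all fibers are equinumerous; the paper's version buys an explicit torus automorphism normalizing the equation to a coordinate form, a standard toric manoeuvre that is reusable when one wants more than a point count (e.g.\ to transport additional structure along the bijection). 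Both correctly read the hypothesis as $\gcd(a_1,\ldots,a_{s_0},b_1,\ldots,b_{s_1})=1$, which is what the application to primitive pairs supplies.
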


\begin{proof}
Since $a_0,a_1, \ldots, a_{s_0}, -b_0, -b_1, \ldots, -b_{s_1}$ are coprime, 
these integers can be viewed as the entries in the first row of a matrix 
$M \in \GL_{s_0+s_1}(\ZZ)$; see~\cite{konrad}.  Rewrite the equation as
\[ 
x_1^{a_1}x_2^{a_2} \cdots x_{s_0}^{a_{s_0}} y_1^{-b_1}y_2^{-b_2} \cdots y_{s_1}^{-b_{s_1}}
    = \alpha^{-1} \beta.
\]
Using $M$ it is easy to find a monomial transformation (= an invertible 
substitution of the variables by Laurent monomials) that takes this equation to
\[ 
x_1 = \alpha^{-1} \beta. 
\] 
This transformation determines a bijection between the respective sets of 
solutions inside $\TT^{s_0 + s_1}(\Fq)$, from which the lemma follows. \qed
\end{proof}

As for (ii), note that if a point 
$(y_1 : y_2 : \ldots : y_{s_0} : z_1 : z_2 : \ldots : z_{s_1})$ satisfies 
$y_i = 0$ and  $z_j = 0$ for at least one pair $y_i,z_j$ then it automatically
concerns a zero of $F_{\ell,s_0,s_1,\sigma_0,\sigma_1}$. There are
\[
(q^{s_0} - (q-1)^{s_0}) (q^{s_1} - (q-1)^{s_1}) - 1
\]
such points in $\Fq^{s_0 + s_1} \setminus \{(0,0, \ldots, 0)\}$, and so we find
the desired contribution, again by using that every $\Fq$-rational point has 
$q-1$ representatives. 

Concerning (iii): these are exactly the zeros of $\mu_0$ that were not counted
elsewhere. Once more we adopt the strategy of first counting the number of 
$\Fq$-rational representatives, after which we divide by $q-1$. At least one 
of the $\sigma_0$ variables appearing in $\mu_0$ should be set to zero, 
accounting for the factor $q^{\sigma_0} - (q-1)^{\sigma_0}$, while the other 
$Y_i$'s can be chosen freely and the $Z_i$'s must be chosen nonzero, accounting 
for the factors $q^{s_0 - \sigma_0}$ and $(q-1)^{s_1}$, respectively.

The case (iv) follows by symmetry. This completes the proof of 
Lemma~\ref{pointcountinglemma}.

\begin{example}
Consider $\PP(2,3,5)$, let $d = 30$, and assume $q \geq 5$. Let
\[
F_{4,2,1,2,1} = X_0X_1X_2 \prod_{i=1}^4 (X_0X_1 - t_i X_2). 
\]
According to Lemma~\ref{pointcountinglemma}, the  number of $\Fq$-rational zeros
of $F_{4,2,1,2,1}$  is $7q - 4$. We believe that this equals $e_q(30;2,3,5)$, 
although we currently cannot offer a proof. But at least this shows that the 
lower bound from Lemma~\ref{serrelowerbound}, which relied on the polynomial
\[
F_{3,1,1,1,1} = X_0^3X_1^2 \prod_{i=1}^3 (X_0^3 - t_i X_1^2), 
\]
can be strict: indeed, $F_{3,1,1,1,1}$ has only $5q + 1$ zeros. On the other 
hand, for $q=4$, this last polynomial trivially meets $e_q(30;2,3,5)$ because 
it is `space-filling', i.e.,  its set of $\Fq$-rational zeros equals all of 
$\PP(2,3,5)(\Fq)$.
\end{example}

\section{Hypersurfaces in Weighted Projective Planes $\PP(1,a_1,a_2)$} 
\label{section_mainsection}

In this section we prove Theorem~\ref{serreforplanes}, i.e.\@ we prove 
Conjecture~\ref{serreimplicitconjecture} for weighted projective planes 
$\PP(1,a_1,a_2)$. Note that by Serre's result for classical projective spaces 
and by Delorme's isomorphism we may assume without loss of generality that 
$a_1 < a_2$ and that these weights are coprime, so $\lcm(a_1,a_2) = a_1a_2$.  
Let $ F \in \Fq[X_0,X_1,X_2] $ be a nonzero polynomial which is weighted 
homogeneous of degree $d$ with $a_1a_2 \mid d$. Assuming that $d \leq a_1(q+1)$,
our task is to prove
\begin{equation} 
\label{serreboundplane}
\card{V(F)} \leq \frac{d}{a_1} q + 1.
\end{equation}
This we will do by mimicking Serre's original proof, for which we need a 
convenient notion of `lines' in the weighted projective plane. Note that if we 
define lines merely as subsets that are cut out by a weighted homogeneous
polynomial of degree $1$, in general the resulting notion is too poor to be of 
any use (we would usually only find $X_0 = 0$).

An easy but crucial feature of having $a_0 = 1$ is that every point 
$(x_0 : x_1 : x_2)$ for which $x_0 \neq 0$ has a unique representative of the
form $(1 : x : y)$. Moreover, the point is $\Fq$-rational if and only if 
$x,y \in \Fq$. Thus the embedding
\[ 
\AA^2 \hookrightarrow \PP(1,a_1,a_2) : (x,y) \mapsto (1 : x : y) 
\]
identifies $\AA^2$ with the chart $X_0 \neq 0$, in an equivariant way 
(i.e.\@ the identification continues to hold if one restricts to 
$\Fq$-rational points).  We call $H_\infty : X_0 = 0$ the `line at infinity'.
Note that it naturally carries the structure of the weighted projective line 
$\PP(a_1,a_2)$.

\begin{remark}
We can think of $\PP(1,a_1,a_2)$ as the affine plane to which a line at infinity
has been glued, albeit in a non-standard way. This can be made precise 
geometrically (see, for example,\ Dolgachev~\cite{dolgachev}) and it turns out 
(see, for example, Section 2 of the appendix) that, in general, the coordinate
points at infinity are singular (we will not use this).
\end{remark}

\begin{remark}
Writing $V(F)^{\aff}$ for the set of affine $\Fq$-rational zeroes, it is not
too hard to show that $\card{V(F)^{\aff}} \leq (d/a_1)q$, for instance using 
Ore's inequality; 
see Section~\ref{app:ore} of the appendix.
\end{remark}

The affine zeros of $F$ are precisely the zeros of the dehomogenized polynomial
\[ 
F(1,x,y) \in \Fq[x,y]. 
\]
Conversely, given a polynomial in $x$ and $y$, there is a natural way of 
homogenizing it, by substituting $x \leftarrow X_1, y \leftarrow X_2$ and 
adding to each term as many factors $X_0$ as minimally needed. We define a 
`line' in $\PP(1,a_1,a_2)$ to be either a homogenized linear bivariate equation,
or the line at infinity:

\begin{definition} 
\label{defline}
An \emph{$\Fq$-rational line} in $\PP(1,a_1,a_2)$ is a subset defined by an 
equation of one of the following types.
\begin{itemize}
\item Type $0$: The line $X_0 = 0$, which we shall denote $H_\infty$ 
      (the \emph{line at infinity}). Points on this line may be called the 
      \emph{points at infinity}. 
\item Type $1$: Lines of the form $\alpha X_0^{a_1} + X_1 = 0$ with 
      $\alpha \in \Fq$ (\emph{vertical} lines).
\item Type $2$: Lines of the form 
      $\alpha X_0^{a_2} + \beta X_1 X_0^{a_2 - a_1} + X_2 = 0$ with
     $\alpha, \beta \in \Fq$ (\emph{non-vertical} lines).
\end{itemize}
\end{definition}

\begin{remark}
\label{rem:changeofvariable}
Note that using an $\Fq$-rational change of variables that respects the grading,
any $\Fq$-rational line of type $i$ can be transformed into $X_i = 0$.  For 
instance, for the vertical line $\alpha X_0^{a_1} + X_1 = 0$ this amounts to 
substituting $X_1 \leftarrow X_1 - \alpha X_0^{a_1}$.
\end{remark}

\begin{lemma} 
\label{linepropertieslemma}
Any $\Fq$-rational line in $\PP(1,a_1,a_2)$ contains exactly $q+1$ rational 
points, and any pair of $\Fq$-rational lines in $\PP(1,a_1,a_2)$ has at least 
one rational point in common.
\end{lemma}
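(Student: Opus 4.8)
The plan is to treat the two assertions separately, and in each case to split according to the three line types of Definition~\ref{defline}, exploiting the affine chart $\AA^2 \hookrightarrow \PP(1,a_1,a_2)$ together with the line at infinity $H_\infty \cong \PP(a_1,a_2)$.

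For the point count I would first dispose of $H_\infty$: it carries the structure of the weighted projective line $\PP(a_1,a_2)$, which has exactly $p_1 = q+1$ rational points by the general fact (recalled in the introduction) that the number of $\Fq$-rational points of a weighted projective $m$-space equals $p_m$. For a line of type $1$ or type $2$ I would argue directly in the affine chart. Dehomogenizing a vertical line $\alpha X_0^{a_1} + X_1 = 0$ gives $x = -\alpha$, and a non-vertical line gives $y = -\alpha - \beta x$; each has exactly $q$ affine $\Fq$-rational solutions $(x,y)$. It then remains to count the points at infinity: setting $X_0 = 0$ collapses a type $1$ equation to $X_1 = 0$ and a type $2$ equation to $X_2 = 0$, and in each case the unique resulting rational point of $H_\infty$ is $(0:0:1)$, respectively $(0:1:0)$. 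This yields $q+1$ in both cases. (Alternatively, the substitution of Remark~\ref{rem:changeofvariable} reduces a type $i$ line to the coordinate line $X_i = 0$, which is again a weighted projective line with $q+1$ points; since the substitution is a graded $\Fq$-automorphism, it is a bijection on rational points and preserves the count.)

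For the intersection statement, the key observation extracted from the computation above is that every type $1$ line passes through the fixed point $(0:0:1)$ and every type $2$ line passes through the fixed point $(0:1:0)$, both of which lie on $H_\infty$. I would then run through the pairs: two type $1$ lines share $(0:0:1)$; two type $2$ lines share $(0:1:0)$; and $H_\infty$ meets any type $1$ line at $(0:0:1)$ and any type $2$ line at $(0:1:0)$. The only remaining case is a type $1$ line $\alpha X_0^{a_1} + X_1 = 0$ against a type $2$ line $\alpha' X_0^{a_2} + \beta' X_1 X_0^{a_2 - a_1} + X_2 = 0$; here the two points at infinity $(0:0:1)$ and $(0:1:0)$ differ, so I would instead produce the common point in the affine chart, where $x = -\alpha$ and $y = -\alpha' - \beta' x$ meet in the single rational point $(-\alpha,\, -\alpha' + \beta'\alpha)$.

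The argument is essentially bookkeeping, so the main obstacle is not any single hard step but rather the care demanded by the weighted setting: one must correctly identify the rational points at infinity, using that a nonzero triple such as $(0:0:x_2)$ represents a single weighted projective point regardless of $x_2 \in \Fq^\ast$, and must invoke both the ``$q-1$ representatives'' count and the $\PP(a_1,a_2)$-structure of $H_\infty$ to obtain the totals. The most delicate case to verify cleanly is the type $1$ versus type $2$ intersection, where one must confirm that the two lines genuinely meet in the affine chart rather than failing to meet at infinity.
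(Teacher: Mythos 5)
Your proof is correct and follows essentially the same route as the paper's: counting $q$ affine points plus a unique point at infinity for type $1$ and type $2$ lines (with $H_\infty \cong \PP(a_1,a_2)$ giving $q+1$ directly), and then checking intersections case by case via the common points $(0:0:1)$ and $(0:1:0)$ and the affine meeting of a type $1$ with a type $2$ line. Your version simply spells out the dehomogenization and the explicit affine intersection point that the paper's terse proof leaves implicit.
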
 

\begin{proof}
Being a copy of $\PP(a_1,a_2)$, it is clear that the line at infinity in
$\PP(1,a_1,a_2)$ contains $q+1$ rational points, while all other $\Fq$-rational
lines contain $q$ affine points along with a unique point at infinity.  Clearly
type~1 and type~2 lines meet the line $X_0=0$ and a type~1 line meets a type~2 
line in the affine plane.  Type~1 lines all meet at $(0:0:1)$ and type~2 lines
all meet at $(0:1:0)$.  This establishes the lemma.\qed
\end{proof}

The points at infinity $(0:0:1)$ and $(0:1:0)$ on the coordinate axes will be
denoted by $P_\infty$ and $P_\infty'$, respectively.

\begin{remark} 
\label{remarkprojplane}
Figure~\ref{linesinWPS} illustrates the intersection behaviour of lines in 
$\PP(1,a_1,a_2)$; the point $P_\infty'$ acts as a vortex attracting all lines 
of type $2$.
\begin{figure}[h!] 
\centering
			\begin{tikzpicture}[scale=0.6]
			  \draw[-] (0,0) -- (10,0);
			  \draw[-] (0,0) -- (0,6);
              \draw [fill=black] (0,0) circle (0.1);
              \draw [cyan,fill=cyan] (1.5,1.5) circle (0.1);
              \draw [orange,fill=orange] (10,0) circle (0.1);
              \draw [orange,fill=orange] (0,6) circle (0.1); 
              \draw [cyan,thick] plot [smooth,tension=0.7] coordinates{(0,1.3) (1.5,1.5) (5.9,1) (10,0)}; 
              \draw [cyan,thick] plot [smooth,tension=0.7] coordinates{(0,0.5) (1.5,1.5) (5,1.9)  (10,0)}; 
              \draw [orange,thick] plot [smooth,tension=0.7] coordinates{(0,6) (4.5,2.8)  (10,0)};  
              \draw [purple,thick] (0,6) -- (4,0);            
              \node at (-0.5,-0.5) {$(1:0:0)$};
              \node at (10.5,-0.5) {$(0:1:0) = P'_\infty$}; 
              \node at (-1,6.4) {$P_\infty = (0:0:1)$};              
              \node at (5,3) {$H_\infty$};
\draw [->] (5.6,-0.6) arc[x radius=1.5cm, y radius =1.8cm, start angle=0, end angle=40];
			  \node at (5.6,-1) {affine plane};
			  \node at (6,6) {\begin{tabular}{l} \textcolor{orange}{type 0 ($H_\infty$)} \\ \textcolor{purple}{type 1 (vertical line)} \\
			  \textcolor{cyan}{type 2 (non-vertical line)} \end{tabular}};
			\end{tikzpicture}
    \caption{Lines in $\PP(1,a_1,a_2)$.} \label{linesinWPS}
\end{figure}
\end{remark}

We are now ready to prove the upper bound for $\card{V(F)}$ stated 
in~\eqref{serreboundplane}. Let $H_1, H_2, \ldots, H_t \in \Fq[X_0,X_1,X_2]$ be
the distinct `linear' factors of $F$, i.e.\@ the divisors of $F$ having one of
the three forms mentioned in Definition~\ref{defline}. Note that 
\[ 
d \geq \deg H_1H_2 \cdots H_t \geq 1 + (t-1)a_1, 
\]
which leads to  $t \leq d/a_1$ since $a_1 \mid d$. For each $i = 1, 2, \ldots, t$ 
we define $L_i=V(H_i)$, and we similarly write $L_\infty = V(X_0)$ for the set
of $\Fq$-rational points on $H_\infty$. Let
\[
L=\bigcup_{i=1}^{t}L_i.
\]
As a first step in the proof, we show that $\card{L} \leq tq+1$ by induction on $t$.
The case $t=0$ is trivial and the case $t=1$ follows from
Lemma~\ref{linepropertieslemma}. In the general case we have 
\begin{align*}
\card{L} &=    \biggcard{\bigcup_{i=1}^{t}L_i  }                                                         \\
         &=    \biggcard{\bigcup_{i=1}^{t-1}L_i} + \card{L_t} - \biggcard{\bigcup_{i=1}^{t-1}L_i\cap L_t}\\
         &\leq (t-1)q + 1 + q + 1 - 1                                                                    \\
         &=     tq + 1,
\end{align*}
where the second step again uses Lemma~\ref{linepropertieslemma}.

To proceed, we distinguish between three cases.\\

\noindent 
\textcolor{blue}{\textbf{Case 1}}:
Suppose that $V(F)\setminus L \subseteq L_\infty\setminus \{P_\infty\}.$
\begin{enumerate}
\item If $L_i=L_\infty$ for some $i$, then we have 
      \[ 
      \card{V(F)} = \card{L} \leq (d/a_1) q+1
      \]
      by the previous observation. 
\item Suppose $L_i\neq L_\infty$ for all $i$. Then:
     \begin{itemize}
     \item either $t=d/a_1$, which is possible only if all $H_i$'s are 
           vertical and $V(F)=L$, so again the bound follows 
           (note that this case covers our example~\eqref{serresinequalitygen} 
           proving sharpness),
     \item or $t<d/a_1$, in which case the following estimate applies:
           \begin{align*}
           \card{V(F)} &\leq \card{L} + \card{L_\infty \setminus \{P_\infty\}}  \\
                       &=    \card{L} + q                                      \\  
                       &\leq tq + 1 + q                                         \\
                       &\leq (d/a_1 - 1)q + 1 + q                               \\
                       &=    (d/a_1)q + 1.
           \end{align*}
     \end{itemize}
\end{enumerate}
This concludes the proof in Case 1.\\

\noindent 
\textcolor{blue}{\textbf{Case 2}}: 
There exists a point $P\in \AA^2$ that lies in $V(F)\setminus L$.  Let $X$ 
denote the set of pairs $(P',H)$ of $\Fq$-rational points and $\Fq$-rational 
lines such that $P,P' \in V(F) \cap H$ and $P \neq P'$. We are going to estimate
the cardinality of $X$ in two ways. On the one hand
\begin{align*}
\card{X} &=    \sum_{P'\in V(F)\setminus \{P\}} \card{\set{L}{L \mbox{ is a line with }P,P'\in L}}                               \\
         &\geq \sum_{P'\in V(F)^{\aff}\setminus \{P\}}1 \ \ = \ \ \card{V(F)^{\aff}\setminus \{P\}}, 
\end{align*}
where as before 
$V(F)^{\aff} = V(F) \cap \AA^2 = V(F) \setminus L_\infty$.
On the other hand, we have
\begin{align*}
\card{X} &=\sum_{\substack{H \ni P \\H \mbox{ \scriptsize{type} } 1 }}(\card{V(F)\cap H}- 1)+\sum_{\substack{H \ni P \\H \mbox{ \scriptsize{type} } 2 }}(\card{V(F)\cap H}- 1) \\   
         & \hspace{2.3cm} \downarrow \text{\footnotesize{$X_1 = 0 \rightsquigarrow \PP(1,a_2)$}} \hspace{1.15cm} \downarrow \text{\footnotesize{$X_2 = 0 \rightsquigarrow \PP(1,a_1)$}}\\
         &\leq \hspace{0.9cm} 1 \cdot \left(\frac{d}{a_2}-1 \right) \hspace{0.8cm} + \hspace{0.8cm} q\left(\frac{d}{a_1}-1\right).
\end{align*}
The first vertical arrow above indicates that in order to estimate 
$\card{V(F) \cap H}$ for a line $H$ of type $1$, we can assume that $H$ is 
defined by $X_1 = 0$, by using a change of variables if needed by the remark 
after Definition~\ref{defline}. But then our task is to estimate the number of 
$\Fq$-rational zeros of $F(X_0,0,X_2)$ in the weighted projective line 
$\PP(1,a_2)$, which is bounded by $d/a_2$ as observed in the example in 
Section~\ref{sec:intro}, discussing the base case $m=1$.  Here we note that 
$F(X_0,0,X_2) \neq 0$ because $H$ contains $P \notin L$. A similar justification
goes along with the second vertical arrow.

Combining both estimates, we find that
\[
  \card{V(F)^{\aff}\setminus \{P\}} = \card{V(F)^{\aff}} - 1 \leq \frac{d}{a_2}-1+q\left(\frac{d}{a_1}-1\right). 
\]
Since  $a_1 < a_2$ and $a_1$ and $a_2$ are coprime  it follows that
\begin{align*}
\card{V(F)}&\leq \frac{d}{a_2}+q\left(\frac{d}{a_1}-1\right)+\card{V(F)\cap H_\infty}\\
           & \hspace{4cm} \downarrow \text{\footnotesize{$X_0 = 0 \rightsquigarrow \PP(a_1,a_2)$}} \\ 
           &\leq \frac{d}{a_2}+q\left(\frac{d}{a_1}-1\right)  \hspace{0.28cm} + \hspace{0.28cm} \frac{d}{a_1a_2}\\
           &=    q\frac{d}{a_1}+1 + \frac{d}{a_2}\frac{a_1+1}{a_1}-q-1\\
           &\leq q\frac{d}{a_1}+1 + \frac{d}{a_1}-q-1\\
           &\leq q\frac{d}{a_1}+1,
\end{align*}
where the last inequality uses our assumption that $d \leq a_1(q+1)$. This ends
the proof in Case 2.\\

\noindent 
\textcolor{blue}{\textbf{Case 3}}: 
One has $P_\infty \in V(F)\setminus L$. This case is similar but easier. Using
the same definition of $X$ with $P = P_\infty$, one finds on the one hand that
\begin{align*}
\card{X} &= \sum_{P'\in V(F)\setminus \{P\}} \card{\set{L}{L \mbox{ is a line with }P,P'\in L}}\\
         &= \sum_{P'\in V(F)\setminus \{P\}} 1                                                 \\
         &=  \card{V(F)} - 1,
\end{align*}
and, on the other hand, that
\begin{align*}
\card{X} &=   \sum_{H \mbox{ \scriptsize{type} } 0 }(\card{V(F)\cap H}- 1)+\sum_{H \mbox{ \scriptsize{type} } 1 }(\card{V(F)\cap H}- 1) \\   
         & \hspace{2.2cm} \downarrow \text{\footnotesize{$X_0 = 0 \rightsquigarrow \PP(a_1,a_2)$}} \hspace{1.1cm} \downarrow \text{\footnotesize{$X_1 = 0 \rightsquigarrow \PP(1,a_2)$}}\\
         &\leq \hspace{0.6cm} 1 \cdot \left(\frac{d}{a_1a_2}-1 \right) \hspace{0.85cm} + \hspace{0.85cm} q\left(\frac{d}{a_2}-1\right).
\end{align*}
Together, this combines to yield
\begin{align*}
\card{V(F)} &\leq   \frac{d}{a_1a_2} + q \left(\frac{d}{a_2} - 1 \right) \\
            &\leq   \frac{d}{a_1}    + q \frac{d}{a_1} - q               \\
            &\leq q \frac{d}{a_1}    + 1 ,
\end{align*}
where the last step uses $d \leq a_1(q+1)$. Thus Theorem~\ref{serreforplanes} 
is proved.

\section{Weighted projective Reed--Muller codes}  
\label{section_WPRM}

In this section, we outline how the considerations of the previous sections can 
be applied to coding theory. Recall that a ($q$-ary) linear code of length $n$
and dimension $k$ is, by definition, a $k$-dimensional subspace of $\Fq^n$. The
minimum distance of such a code $C$ is defined by 
\[
d(C):= \min \set{\wt(x)}{x\in C \text{ with } x\ne 0},
\]
where for any $x= (x_1, \dots , x_n)$, the Hamming weight $\wt(x)$ is the
number of nonzero coordinates in $x$, i.e., $\card{\set{ i }{ x_i \ne 0 }}$. We usually 
say that a $q$-ary linear code $C$ has parameters $[n,k,d]$ or that $C$ is a 
$[n,k,d]_q$-code if $C$ has length $n$, dimension $k$, and minimum distance~$d$.
We shall begin by reviewing some classical families of linear codes. 

\subsection{Generalized Reed--Muller codes, projective Reed--Muller codes and
            projective nested cartesian codes}

The generalized Reed--Muller code over $\Fq$ of order $d$ and with $m$
variables has been introduced by Delsarte, Goethals and MacWilliams in 1970 
in~\cite{DGMcW}. It is denoted by $\RM_q(d,m)$ and defined as the image of the
evaluation map 
\[
c\colon\Fq[X_1,\ldots,X_m]_{\le d} \longrightarrow \Fq^{q^m} 
\quad \text{given by} \quad c(f)=(f(P))_{P\in \AA^m(\Fq)},
\]
where $\Fq[X_1,\ldots,X_m]_{\le d}$ denotes the $\Fq$-vector space of all 
polynomials in $m$ variables $X_1, \dots , X_m$ with coefficients in $\Fq$ and 
with degree $\le d$. 

If $d < q$, then the evaluation map $c$ is injective, and so the dimension of 
$\RM_q(d,m)$ equals $\dim_{\Fq} \Fq[X_1,\ldots,X_m]_{\le d}$, which is
${d+m}\choose{m}$. The minimum distance can be deduced from a classical result 
of Ore (cf.~noted in \cite[Thm. 6.13]{LN}), which implies that the maximal
number of zeros in $\AA^m(\Fq)$ of a polynomial in $\Fq[X_1, \dots , X_m]$ of
degree $d$ is equal to $dq^{m-1}$. Thus we have:

\begin{proposition}
\label{parameters_RM_q(d,m)}
If $d < q$, then the code $\RM_q(d,m)$ has parameters 
\[
\left[q^m, \; {{d+m}\choose{d}}, \; (q-d)q^{m-1} \right].
\]
\end{proposition}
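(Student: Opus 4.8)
The plan is to establish the three parameters of $\RM_q(d,m)$ separately, handling length, dimension, and minimum distance in turn, with the hypothesis $d < q$ playing the crucial role in the latter two.

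For the length, this is immediate: the code is defined as the image of the evaluation map into $\Fq^{q^m}$, and since $\card{\AA^m(\Fq)} = q^m$, the length is $q^m$ by construction.

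For the dimension, the key point is that the evaluation map $c$ is injective when $d < q$. First I would argue injectivity: if $f \in \Fq[X_1,\ldots,X_m]_{\le d}$ vanishes at every point of $\AA^m(\Fq)$ but $f \neq 0$, then $f$ would be a nonzero polynomial of degree $\le d < q$ having $q^m$ zeros in $\AA^m(\Fq)$. This contradicts the bound on the number of affine zeros (Ore's result, quoted in the text), which gives at most $dq^{m-1} < q^m$ zeros; alternatively one invokes the standard fact that a polynomial whose degree in each variable is less than $q$ and which vanishes on all of $\AA^m(\Fq)$ must be the zero polynomial. Given injectivity, the dimension of the code equals $\dim_{\Fq}\Fq[X_1,\ldots,X_m]_{\le d}$, which is the number of monomials of total degree at most $d$ in $m$ variables, namely $\binom{d+m}{m} = \binom{d+m}{d}$ by a standard stars-and-bars count.

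For the minimum distance, I would translate weights into zero-counts: for a nonzero $f$, its codeword $c(f)$ has Hamming weight $q^m - Z(f)$, where $Z(f)$ is the number of zeros of $f$ in $\AA^m(\Fq)$. Minimizing the weight over nonzero codewords is therefore equivalent to maximizing $Z(f)$ over nonzero $f$ of degree $\le d$. By Ore's theorem (quoted in the excerpt), this maximum number of affine zeros is exactly $dq^{m-1}$, and it is attained—for instance by a product of $d$ distinct parallel hyperplanes such as $\prod_{i=1}^{d}(X_1 - \gamma_i)$ with the $\gamma_i \in \Fq$ distinct, which is possible precisely because $d \le q$. Hence the minimum distance is $q^m - dq^{m-1} = (q-d)q^{m-1}$. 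The main obstacle, such as it is, lies in the sharpness of the minimum distance bound: one must exhibit a polynomial actually achieving $dq^{m-1}$ zeros, and verify that the attaining example is a legitimate codeword of degree at most $d$; the condition $d < q$ guarantees enough distinct field elements to build the requisite parallel hyperplanes, so this goes through cleanly.
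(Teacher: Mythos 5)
Your proof is correct and follows essentially the same route as the paper: the paper also obtains the dimension from injectivity of the evaluation map for $d<q$ together with the monomial count $\binom{d+m}{m}=\binom{d+m}{d}$, and the minimum distance from Ore's bound $dq^{m-1}$ on the number of affine zeros. You merely flesh out details the paper leaves implicit, namely the injectivity argument and the extremal example $\prod_{i=1}^{d}(X_1-\gamma_i)$ showing the bound is attained.
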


\bigskip

The projective Reed--Muller codes were introduced and studied by 
Lachaud~\cite{lachaud, L} and S{\o}rensen~\cite{So} by the late 1980's and 
early 1990's. They can be defined as follows. 

Choose representatives in $\Fq^{m+1}$ for $\Fq$-rational points of the (usual) 
projective space $\PP^m$  in such a way that the first nonzero coordinate is~$1$.
Let $P_1, \dots , P_{p_m}$ be a fixed collection of such representatives for the 
points of $\PP^m(\Fq)$. Now the evaluation map 
\[
c\colon\Fq[X_1,\ldots,X_m]_{d} \longrightarrow \Fq^{p_m} 
\quad \text{given by} \quad c(f)= \left( f(P_1), \dots f(P_{p_m}) \right)
\]
is injective if $d\le q$ and we define $\PRM_q(d,m)$ to be the image of this map.
Using~\eqref{serreequality}, we can deduce the following.

\begin{proposition}
\label{parameters_PRM_q(d,m)}
If $d\le q$, then the code $\PRM_q(d,m)$ has parameters 
\[
\left[ p_m, \; {{d+m}\choose{d}}, \; (q-d+1)q^{m-1} \right].
\]
\end{proposition}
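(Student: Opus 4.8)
The plan is to read off all three parameters directly from Serre's formula \eqref{serreequality}, in exactly the way Proposition~\ref{parameters_RM_q(d,m)} was read off from Ore's bound. The length is immediate from the construction: the map $c$ evaluates at the $p_m$ chosen representatives $P_1,\dots,P_{p_m}$, so every codeword lives in $\Fq^{p_m}$ and the length is $p_m$.

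For the dimension, the first task is to confirm that the evaluation map $c\colon\Fq[X_0,\dots,X_m]_d\to\Fq^{p_m}$ is injective when $d\le q$, so that $\dim\PRM_q(d,m)$ equals $\dim_{\Fq}\Fq[X_0,\dots,X_m]_d=\binom{d+m}{m}=\binom{d+m}{d}$. Here I would use that $f$ is homogeneous of degree $d$, so $f(\lambda P)=\lambda^d f(P)$ and whether $f(P_i)=0$ depends only on the projective point $[P_i]$; hence $c(f)=0$ holds precisely when $f$ vanishes at every point of $\PP^m(\Fq)$, i.e.\ $\card{V(f)}=p_m$. By \eqref{serreequality}, a nonzero $f$ satisfies $\card{V(f)}\le e_q(d,m)=\min\{p_m,\;dq^{m-1}+p_{m-2}\}$, and a short computation shows $dq^{m-1}+p_{m-2}<p_m$ exactly when $d\le q$, the gap being $(q+1-d)q^{m-1}>0$. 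Thus for $d\le q$ no nonzero form can vanish everywhere, $c$ is injective, and the claimed dimension follows.

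For the minimum distance, I would observe that for any $f$ the weight $\wt(c(f))$ is the number of the $p_m$ points at which $f$ does \emph{not} vanish, namely $p_m-\card{V(f)}$. Minimising the weight over nonzero codewords — equivalently over nonzero forms, by the injectivity just established — amounts to maximising $\card{V(f)}$, so $d(\PRM_q(d,m))=p_m-e_q(d,m)$. In the range $d\le q$ we have $e_q(d,m)=dq^{m-1}+p_{m-2}$, whence $d(\PRM_q(d,m))=p_m-p_{m-2}-dq^{m-1}=(q^m+q^{m-1})-dq^{m-1}=(q-d+1)q^{m-1}$, as claimed. That this value is genuinely attained (and is not merely a lower bound on the minimum distance) follows from the sharpness of Serre's bound: the form \eqref{serresinequalitymet} is a nonzero homogeneous polynomial of degree $d$ realising $e_q(d,m)$ zeros, hence a codeword of precisely this weight.

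There is no serious obstacle here; the whole statement is a repackaging of \eqref{serreequality}. The only points needing care are the well-definedness and injectivity of $c$, which hinge on the homogeneity of $f$ together with the strict inequality $dq^{m-1}+p_{m-2}<p_m$ valid for $d\le q$, and the remark that Serre's bound is met by an admissible codeword, so that $p_m-e_q(d,m)$ is the true minimum distance rather than just an upper estimate on the weights.
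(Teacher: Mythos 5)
Your proof is correct and is exactly the argument the paper intends: the paper derives this proposition directly from Serre's formula \eqref{serreequality} (with the injectivity of $c$ for $d\le q$ noted just before the statement), and your write-up simply fills in the routine details of that derivation — length by construction, dimension via injectivity from the strict inequality $dq^{m-1}+p_{m-2}<p_m$, and minimum distance as $p_m-e_q(d,m)=(q-d+1)q^{m-1}$.
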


This construction has been generalized in \cite{A} where the evaluation of the 
homogeneous polynomials is done on the rational points of an hypersurface 
of~$\PP^m(\Fq)$, most notably on quadric hypersurfaces. The parameters of such 
codes have been improved in  3 and 4-dimensional projective spaces in a series 
of papers (see, for example,~\cite{E}).
\bigskip

Recently, Carvalho, Lopez Neumann and L\'opez have proposed in \cite{CLNL} 
another generalization of $\PRM_q(d,m)$. In their paper, the evaluation of
homogeneous polynomials is done  on suitable representatives in $\Fq^{m+1}$ of 
projective cartesian sets 
$
\{(a_0:a_1: \cdots:a_m)\in\PP^m (\Fq) : 
    a_i\in A_i \text{ for } i=0,1, \dots , m\},
$ 
where $A_0, A_1, \ldots,A_m$ are nonempty subsets of $\Fq$. 


\subsection{Weighted projective Reed--Muller codes}
Let $a_0,\ldots,a_m$ be positive integers such that 
$\gcd(a_0, a_1, \dots , a_m)=1$. Denote the $(m+1)$-tuple 
$(a_0, a_1, \dots , a_m)$ by $\w$. Consider an integer $d$ which is a multiple
of the least common multiple of the $a_i$'s, say $d=k\lcm(a_0\ldots a_m)$.

We consider the weighted projective space $\PP(\w) = \PP(a_0,\ldots,a_m)$ 
of dimension $m$ with weights $a_0,\ldots,a_m$ over $\Fq$, whose definition was
recalled in Section \ref{sec:intro}. Note that  $\PP(a_0,\ldots,a_m)$ is a 
disjoint union of $W_0, W_1, \dots , W_m$, where for $0\le i \le m$, 
\[
W_i:= \set{ (x_0:\cdots:x_m)\in \PP(a_0,\ldots,a_m) }{ x_0=\cdots=x_{i-1}=0, \ x_i \ne 0 }.
\]

As before, let $\sansS_d$ denote the space of weighted homogeneous polynomials 
of degree $d$. We define the Weighted Projective Reed--Muller code of order $d$
over  $\PP(a_0,\ldots,a_m)(\Fq)$, denoted by $\WPRM_q(d,m; \w)$, as the image 
of the linear map
\[
c\colon \sansS_d \longrightarrow \Fq^{p_m} 
\quad \text{given by} \quad c(F)=(c_x(F))_{x\in \PP(\w)(\Fq)},
\]
where for $x = (x_0: x_1: \cdots:x_m) \in  \PP(\w)(\Fq)$,
\[
c_x(F)=\frac{F(x_0,\ldots,x_m)}{x_i^{d/a_i}} \quad \text{if} \ x=(x_0:\cdots:x_m)\in W_i.
\]
Observe that the map $c$ is well defined. Indeed, for a nonzero 
$\lambda \in \overline{\FF}_q$, if 
$y=(\lambda^{a_0} x_0:\cdots :\lambda^{a_m} x_m)=(x_0:\cdots:x_m)=x \in W_i$,
then 
\[
c_y(F) = \frac{F(\lambda^{a_0}x_0,\ldots,\lambda^{a_m}x_m)}{(\lambda^{a_i}x_i)^{d/a_i}}
       = \frac{\lambda^dF(x_0,\ldots,x_m)}{\lambda^dx_i^{d/a_i}}
       = c_x(F).
\]
This argument shows also that $c_x(F)\in \Fq$ since every point $x$ of 
$\PP(\w)(\Fq)$ has weighted homogeneous coordinates $(x_0:x_1: \cdots:x_m)$
such that $x_i\in \Fq$ for $i=0, 1, \dots, m$. 

\subsubsection{Length and dimension}

The length of $\WPRM_q(d,m; \w)$ is clearly $p_m=q^m+\cdots+q+1$. Assume that
$d \le q$. Then the  linear map $c$ in injective and so the dimension of
$\WPRM_q(d,m; \w)$ is equal to the dimension of the $\Fq$-vector space 
$\sansS_d$, which is equal to the number of representations of $d$ as a 
nonnegative integer linear combination of $a_0,\ldots,a_m$:
\[
\left|  \{ (\alpha_0,\ldots,\alpha_m)\in{\ZZ}^{m+1}_{\ge 0} : 
                  \alpha_0a_0+\cdots +\alpha_ma_m=d\} \right|.
\]
Note that, using a theorem of Schur (see, e.g., \cite[Thm. 3.15.2]{WZ}), we
have an asymptotic formula
\[
\dim \WPRM_q(d,m; \w) =\frac{d^{m}}{m!a_0\ldots a_m} + O(d^{m-1}) 
                       \quad \text{when } d\rightarrow \infty. 
\]

If we suppose that $a_0=1$, then this dimension is equal to
\[
\left| \{(\alpha_1,\ldots,\alpha_m)\in{\ZZ}^{m+1}_{\ge 0}  : 
               \alpha_1a_1+\cdots +\alpha_ma_m\leq d\} \right|.
 \]
This can be viewed as the number of integral points in an integral convex
polytope and then the dimension can be obtained using Ehrhart polynomials 
(see the examples below in dimension 2).

\subsubsection{Minimum distance}
The minimum distance of  $\WPRM_q(d,m; \w)$ is equal to the number of rational
points on $\PP(\w)(\Fq)$ minus the maximal number of points on a hypersurface 
$V$ of degree $d$ of $\PP(\w)(\Fq)$. Thus we can determine it using the results
of the previous sections. 
 
First, suppose $i,j \in \{0,1, \dots , m\}$ and $d'\in \ZZ$ are such that 
\[
\lcm(a_i,a_j)=\min\{\lcm(a_r,a_s), r\not=s\}, 
                \quad \text{and} \quad d':= \frac{d}{\lcm(a_i,a_j)}.
\]
Then from Lemma~\ref{serrelowerbound}, we see that 
\[
d(\WPRM_q(d,m; \w)) \le (q-d'+1)q^{m-1}.
\]
Furthermore, if $a_0=1$ and $m=2$ and we assume, without loss of generality 
that $a_1 \le a_2$, then from Theorem \ref{serreforplanes}, we see that 
\begin{equation}
\label{exactmindist}
d(\WPRM_q(d, 2; \w)) = \left( q-\frac{d}{a_1}+1 \right) q^{m-1}.
\end{equation}

\subsubsection{A particular case}

Consider the particular case of the weighted projective plane $\PP(1,1,a)$, 
where $a$ is a positive integer. Also let $\w = (1,1,a)$. Given a convex
polytope $\Delta$ whose vertices have integral coordinates, the function which 
assigns to a nonnegative integer $k$ the number $\card{k \Delta\cap{\ZZ}^m}$ of 
integral points in dilates $k\Delta$ of  $\Delta$ is a polynomial of degree~$m$,
called the Ehrhart polynomial of $\Delta$ (see, for example,~\cite{BDLDPS}).
For $m=2$, this polynomial can be written in the following way:
\[
\card{ k\Delta\cap{\ZZ}^2 } =\Vol(\Delta)k^2+\frac{1}{2} \card{\partial\Delta\cap{\ZZ}^2} k +1.
\]
Hence we find that, for $d=ka$, the dimension of the code $\WPRM_q(d,2; \w)$ 
is equal~to 
\[
\frac{1}{2}ak^2+\frac{a+2}{2} k +1
  =\frac{d^2}{2a}+\frac{(a+2)d}{2a}+1
  =\frac{(d+a)(d+2)}{2a}.
\]
Since we have $d'=d$ in our case, we find from \eqref{exactmindist} that the 
minimum distance of $\WPRM_q(d,2; \w)$ is $q^2-(d-1)q$.

Thus, the code $\WPRM_q(d,2; \w)$ has parameters 
\[
\left[p_2, \; \frac{(d+a)(d+2)}{2a},  \; q^2-(d-1)q \right]
\]
 and we can compare it to the parameters of the code $\PRM_q(d,2)$, which are
\[
\left[ p_2, \; \frac{(d+1)(d+2)}{2}, \; q^2-(d-1)q \right].
\]
We find here that the weighted projective Reed--Muller code has the same length
and the same minimum distance, but worse dimension than the projective
Reed--Muller code.

\subsubsection{Another particular case}

Let $a, b$ be positive integers with $a\le b$ and let $\w = (1,a,b)$. Consider 
the particular case of the weighted projective plane $\PP^2(1,a,b)$ and 
consider an integer $d=k\lcm(a,b)$ with $d\leq q$. Arguing as before, we can 
deduce the following. 

\begin{proposition}
\label{parameters_WPRM_q(d,2)_{(1,a,b)}}
The code $\WPRM_q(d,2; \w)$ has parameters 
\[
\left[ p_2,  \; \frac{(d+2a)(d+b)+(\gcd(a,b)-a)d}{2ab},  \; q^2- \left(\frac{d}{a}-1 \right)q \right].
\]
\end{proposition}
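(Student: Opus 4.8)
The plan is to compute the three parameters—length, minimum distance, and dimension—separately, reusing the machinery already set up for the $\PP(1,a_1,a_2)$ case. Since we are working with $\w = (1,a,b)$ and $m=2$, the length is immediately $p_2 = q^2+q+1$ by the general discussion in Section~\ref{section_WPRM}. For the minimum distance, I would apply formula~\eqref{exactmindist}: with $a_0=1$, $a_1=a$, $a_2=b$, and $a_1 = a \le b = a_2$, Theorem~\ref{serreforplanes} gives
\[
d(\WPRM_q(d,2;\w)) = \left( q - \frac{d}{a} + 1\right) q^{2-1} = q^2 - \left(\frac{d}{a}-1\right)q,
\]
which matches the claimed third entry. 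This part is essentially immediate once one invokes the earlier theorem.

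The substantive work is computing the dimension, which (for $d \le q$, where $c$ is injective) equals $\dim_{\Fq} \sansS_d$, i.e.\ the number of lattice points $(\alpha_1,\alpha_2) \in \ZZ_{\ge 0}^2$ satisfying $\alpha_1 a + \alpha_2 b \le d$, using $a_0 = 1$ as in the formula derived in Section~\ref{section_WPRM}. Geometrically this counts integral points in the triangle $\Delta$ with vertices $(0,0)$, $(d/a,0)$, $(0,d/b)$. The plan is to apply the two-dimensional Ehrhart/Pick-type formula already quoted in the excerpt,
\[
\card{k\Delta \cap \ZZ^2} = \Vol(\Delta)\,k^2 + \tfrac{1}{2}\card{\partial\Delta \cap \ZZ^2}\,k + 1,
\]
but here I must be careful: $\Delta$ has vertices at $(d/a,0)$ and $(0,d/b)$, which are integral precisely because $a \mid d$ and $b \mid d$, so I would set up the count directly on $\Delta$ rather than through a dilation parameter, or equivalently take the base triangle to be the primitive one and dilate by $k = d/\lcm(a,b)$.

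The key computation is the boundary count $\card{\partial\Delta \cap \ZZ^2}$. The legs along the axes contribute $d/a + 1$ and $d/b+1$ lattice points respectively, and the hypotenuse from $(d/a,0)$ to $(0,d/b)$ contributes $\gcd(d/a, d/b) + 1 = d/\lcm(a,b) + 1$ points. Summing the three edges and correcting for the three shared vertices gives the perimeter lattice-point count; the volume is $\Vol(\Delta) = \tfrac{1}{2}(d/a)(d/b) = d^2/(2ab)$. The main obstacle is purely bookkeeping: one must track the $\gcd$ carefully, since $\gcd(d/a,d/b) = (d/\lcm(a,b)) = d\gcd(a,b)/(ab)$, and then feed everything into the Pick/Ehrhart formula, simplify, and verify that the result collapses to
\[
\frac{(d+2a)(d+b) + (\gcd(a,b)-a)d}{2ab}.
\]
I would expand the stated answer as a sanity check: its leading term is $d^2/(2ab)$, matching the volume, and its linear-in-$d$ coefficient should reproduce half the boundary count plus the volume's cross terms. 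Confirming this algebraic identity—rather than any conceptual difficulty—is where the care is needed, and a useful consistency check is the specialization $a=b=1$, which should recover the ordinary $\PRM_q(d,2)$ dimension $\tfrac{1}{2}(d+1)(d+2)$.
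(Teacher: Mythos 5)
Your proposal is correct and follows essentially the same route as the paper, which likewise gets the length immediately, the minimum distance from Theorem~\ref{serreforplanes} via~\eqref{exactmindist}, and the dimension by counting lattice points of the triangle with vertices $(0,0)$, $(d/a,0)$, $(0,d/b)$ through the Ehrhart/Pick formula (the paper compresses all of this into ``Arguing as before''). Your handling of the hypotenuse count $\gcd(d/a,d/b)=d\gcd(a,b)/(ab)$ is exactly the point where the $\gcd(a,b)$ term in the stated dimension arises, and the resulting expansion $\frac{d^2}{2ab}+\frac{d}{2a}+\frac{d}{2b}+\frac{d\gcd(a,b)}{2ab}+1$ indeed equals $\frac{(d+2a)(d+b)+(\gcd(a,b)-a)d}{2ab}$.
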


In particular, if $a=2$ and $b\ge 2$, we see that the minimum distance of the
code $\WPRM_q(d,2; (1,2,b))$  is always better than the minimum distance of 
$\PRM_q(d,2)$, but the dimension of $\WPRM_q(d,2; (1,2,b))$  is always worse 
than the dimension of $\PRM_q(d,2)$.

\subsubsection{Relative parameters}

Recall that, for any code $C$, the transmission rate $R(C)$ and the relative 
distance $\delta(C)$ of $C$ are defined by 
\[
R(C) =\frac{\dim C}{\length C}  \quad \text{and} \quad  \delta (C)=\frac{\dist C}{\length C}.
\]
The number
\[
\lambda(C)=R(C)+\delta(C)=(\dim C + \dist C)/\length C
\]
is a parameter of $C$ and it is suggested in \cite{L} that it can be taken as a
measure of the performance of the code $C$.

It is proved in \cite{L} that if $q\geq d+1$, $m\geq 2$, and $d\geq 2m/(m-1)$,
then
\[
\lambda(\PRM_q(d,m)) > \lambda(\RM_q(d,m)).
\]

If $q$ is sufficiently large then one can show that $\WPRM_q(d,2; (1,2,2))$ has
a greater (and thus better) performance than $\PRM_q(d,2)$:

\begin{proposition}
If $q\geq \frac{3k+3}{2}$, then 
\[
\lambda(\WPRM_q(2k,2; (1,2,2))) \geq \lambda(\PRM_q(2k,2)).
\]
\end{proposition}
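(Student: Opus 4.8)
The plan is to reduce the claimed inequality to a single polynomial inequality in $q$ and $k$. Both codes $\WPRM_q(2k,2;(1,2,2))$ and $\PRM_q(2k,2)$ have the same length $p_2=q^2+q+1$, so from the definition $\lambda(C)=(\dim C+\dist C)/\length C$ the inequality $\lambda(\WPRM_q(2k,2;(1,2,2)))\ge\lambda(\PRM_q(2k,2))$ is equivalent to the statement that $\dim+\dist$ of the weighted code is at least $\dim+\dist$ of the projective code. Thus the whole argument is simply substituting the already-established parameters and simplifying; there is no genuinely hard step.

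First I would record the two parameter sets, working in the standing regime $d=2k\le q$ in which the relevant dimension formulas are valid. Specializing Proposition~\ref{parameters_WPRM_q(d,2)_{(1,a,b)}} to $a=b=2$ and $d=2k$ (so $\gcd(a,b)=2$ and $d/a_1=k$) yields dimension $\tfrac{(k+1)(k+2)}{2}$ and minimum distance $q^2-(k-1)q$; here the minimum distance follows from \eqref{exactmindist}, hence from Theorem~\ref{serreforplanes}, which needs only the mild condition $d\le a_1(q+1)$. Specializing Proposition~\ref{parameters_PRM_q(d,m)} to $m=2$ and $d=2k$ gives dimension $\binom{2k+2}{2}=(k+1)(2k+1)$ and minimum distance $(q-2k+1)q=q^2-(2k-1)q$.

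Next I would subtract the two sides. The $q^2$ terms cancel; the linear-in-$q$ terms combine to $-(k-1)q+(2k-1)q=kq$, while the terms independent of $q$ give $\tfrac{(k+1)(k+2)}{2}-(k+1)(2k+1)=-\tfrac{3k(k+1)}{2}$. Hence the difference between the weighted and projective values of $\dim+\dist$ is
\[
kq-\frac{3k(k+1)}{2}=k\!\left(q-\frac{3k+3}{2}\right),
\]
which is nonnegative precisely when $q\ge\frac{3k+3}{2}$ (the case $k=0$ being trivial). This is exactly the hypothesis, so the inequality follows and the proposition is proved.

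The only point requiring care is the bookkeeping in specializing the two parameter propositions: in particular, keeping track that $a_1=2$ turns the weighted minimum distance into $q^2-(k-1)q$ rather than $q^2-(2k-1)q$ (this gain of $kq$ is precisely the source of the improvement, and it is what must overtake the constant deficit $\tfrac{3k(k+1)}{2}$ coming from the smaller dimension of the weighted code), and noting that the threshold $q\ge\frac{3k+3}{2}$ is exactly where these two quantities balance. One should also confirm that the parameter formulas are applied only within their range of validity, i.e.\ for $d=2k\le q$, so that the tabulated dimensions are correct.
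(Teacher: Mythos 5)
Your proof is correct and follows exactly the paper's own (much terser) argument: since both codes have length $p_2$, compare $\dim + \dist$ using Propositions~\ref{parameters_PRM_q(d,m)} and~\ref{parameters_WPRM_q(d,2)_{(1,a,b)}}, which after simplification gives the difference $k\left(q-\frac{3k+3}{2}\right)$. The paper leaves this computation implicit; you have merely carried it out, including the correct observation that the formulas are used within their range of validity $d=2k\le q$.
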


\begin{proof}
Since the lengths of these codes are equal (namely to $p_2$), we just have to
show that the sum of the dimension and the minimum distance is greater for the
first code when $q$ is sufficiently large. Applying  
Propositions~\ref{parameters_PRM_q(d,m)} and~\ref{parameters_WPRM_q(d,2)_{(1,a,b)}}
yields the desired result.\qed
\end{proof}

In the same way, it is easy to see that:

\begin{proposition}
If $q\geq \frac{7k+4}{2}$, then 
\[
\lambda(\WPRM_q(4k, \; 2; \; (1,2,4))) \geq \lambda(\PRM_q(4k,2)).
\]
\end{proposition}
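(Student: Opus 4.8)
The plan is to reduce the inequality to a single elementary computation. Since $\lambda(C) = (\dim C + \dist C)/\length C$ and the two codes $\WPRM_q(4k,2;(1,2,4))$ and $\PRM_q(4k,2)$ share the common length $p_2$, proving $\lambda(\WPRM_q(4k,2;(1,2,4))) \geq \lambda(\PRM_q(4k,2))$ is equivalent to proving that $\dim + \dist$ is at least as large for the weighted code. So the entire argument consists of writing down the four relevant integers and comparing two of their sums.

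First I would read off the parameters of $\PRM_q(4k,2)$ from Proposition~\ref{parameters_PRM_q(d,m)}, valid in the range $d = 4k \leq q$: its dimension is $\binom{4k+2}{2} = 8k^2 + 6k + 1$ and its minimum distance is $(q - 4k + 1)q$. Then I would specialize Proposition~\ref{parameters_WPRM_q(d,2)_{(1,a,b)}} to $a = 2$, $b = 4$, $d = 4k$. The point that makes this case clean is that $\gcd(a,b) - a = \gcd(2,4) - 2 = 0$, so the correction term in the dimension formula disappears and the dimension collapses to $(4k+4)^2/16 = (k+1)^2$, while the minimum distance is $q^2 - (2k-1)q$.

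With these four numbers in hand the conclusion is immediate. Subtracting the total for $\PRM$ from the total for $\WPRM$, the $q^2$ contributions cancel and one obtains
\[
\left[(k+1)^2 + q^2 - (2k-1)q\right] - \left[8k^2 + 6k + 1 + (q - 4k + 1)q\right] = k(2q - 7k - 4).
\]
Because $k \geq 1$ and $p_2 > 0$, this difference is nonnegative exactly when $2q \geq 7k + 4$, that is, under the stated hypothesis $q \geq \frac{7k+4}{2}$; dividing by the common length $p_2$ then yields $\lambda(\WPRM_q(4k,2;(1,2,4))) \geq \lambda(\PRM_q(4k,2))$.

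I do not expect any genuine obstacle: once the two propositions are invoked the argument is purely mechanical, exactly parallel to the preceding $(1,2,2)$ case (where the analogous computation produces $\tfrac{1}{2}k(2q - 3k - 3)$ and the threshold $q \geq \frac{3k+3}{2}$). The only point to keep in mind is that both parameter formulas presuppose $d = 4k \leq q$; this is part of the standing hypothesis on these codes, and for $k \leq 4$ it is in any case already implied by $q \geq \frac{7k+4}{2}$. It is also worth remarking that the threshold is sharp, since $\frac{7k+4}{2}$ is precisely the root of the linear factor $2q - 7k - 4$.
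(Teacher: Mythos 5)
Your proposal is correct and follows exactly the route the paper takes: since both codes have length $p_2$, the paper's proof (given ``in the same way'' as the $(1,2,2)$ case) reduces the claim to comparing $\dim + \dist$ via Propositions~\ref{parameters_PRM_q(d,m)} and~\ref{parameters_WPRM_q(d,2)_{(1,a,b)}}, which is precisely your computation; your explicit difference $k(2q-7k-4)$ and your remark that the standing hypothesis $d=4k\leq q$ underlies both parameter formulas simply fill in the details the paper leaves implicit.
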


More generally, using Propositions \ref{parameters_PRM_q(d,m)} and  
\ref{parameters_WPRM_q(d,2)_{(1,a,b)}} we can show that:

\begin{theorem}
For any nonnegative integers $a, \beta$ and $k$ with $a\ge 2$, 
\[
\lambda(\WPRM_q(ka\beta, \; 2; \; (1,a,a\beta))) \geq \lambda(\PRM_q(ka\beta,2)),
\]
provided
\[ 
q\geq \frac{k\beta^2a^2+3\beta a-k\beta -\beta-2}{2\beta(a-1)}.
\]
\end{theorem}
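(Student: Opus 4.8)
The plan is to reduce the comparison of the two performance parameters to a single affine inequality in $q$. Since both $\WPRM_q(ka\beta,2;(1,a,a\beta))$ and $\PRM_q(ka\beta,2)$ have the same length $p_2$, we have $\lambda(C) = (\dim C + \dist C)/p_2$ for each, so the asserted inequality $\lambda(\WPRM) \geq \lambda(\PRM)$ is equivalent to
\[
\dim \WPRM_q(ka\beta,2;(1,a,a\beta)) + \dist \WPRM_q(ka\beta,2;(1,a,a\beta)) \;\geq\; \dim \PRM_q(ka\beta,2) + \dist \PRM_q(ka\beta,2).
\]
First I would assemble the four quantities from Propositions~\ref{parameters_PRM_q(d,m)} and~\ref{parameters_WPRM_q(d,2)_{(1,a,b)}}, which apply under the standing hypothesis $d = ka\beta \leq q$ inherited from those propositions (ensuring injectivity of the evaluation maps and that Serre's bound is attained). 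The key simplification is that for $\w = (1,a,a\beta)$ one has $\gcd(a,a\beta) = a$, so the correction term $(\gcd(a,b)-a)d$ in Proposition~\ref{parameters_WPRM_q(d,2)_{(1,a,b)}} vanishes; substituting $d = ka\beta$ then collapses the dimension of the weighted code to the clean expression $\tfrac{1}{2}(k\beta+2)(k+1)$, while its minimum distance is $q^2 - (k\beta - 1)q$.

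Next I would form the difference of the two sides term by term. For the minimum distances, subtracting the $\PRM$ value $q^2 - (ka\beta - 1)q$ from the $\WPRM$ value $q^2 - (k\beta-1)q$ leaves $k\beta(a-1)\,q$, so the weighted code \emph{gains} on distance, and this gain grows linearly in $q$. For the dimensions, subtracting $\binom{ka\beta+2}{2} = \tfrac12(ka\beta+1)(ka\beta+2)$ from $\tfrac12(k\beta+2)(k+1)$ produces a $q$-independent quantity that is negative (the weighted code \emph{loses} on dimension). Adding the two contributions, the total difference is $k\beta(a-1)\,q$ minus a fixed polynomial in $a,\beta,k$, i.e.\ an affine function of $q$ whose slope $k\beta(a-1)$ is strictly positive because $a \geq 2$ and, in the nontrivial case, $k \geq 1$.

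Finally I would solve this affine inequality for $q$. Dividing through by $k$ and by the positive slope $\beta(a-1)$ should yield precisely the threshold $q \geq \frac{k\beta^2 a^2 + 3\beta a - k\beta - \beta - 2}{2\beta(a-1)}$ stated in the theorem; the degenerate case $k = 0$ gives two one-dimensional codes, each of full minimum distance $p_2$, so $\lambda$ agrees and the inequality holds trivially. The work here is essentially bookkeeping, so I do not expect a genuine obstacle beyond verifying that these manipulations reproduce the stated numerator exactly. The only points demanding care are the vanishing of the $\gcd$-correction term (which is exactly what forces the numerator into the claimed form), the hypothesis $a \geq 2$ guaranteeing a positive slope (so that the condition really becomes a lower bound on $q$ rather than an upper one), and keeping track of the restriction $d \leq q$ under which the parameter formulas are valid; one should note in passing whether the stated threshold already subsumes $d \leq q$ or whether this must be carried along as an auxiliary assumption.
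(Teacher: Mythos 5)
Your proposal is correct and takes essentially the same route as the paper, which proves this theorem precisely by combining Propositions~\ref{parameters_PRM_q(d,m)} and~\ref{parameters_WPRM_q(d,2)_{(1,a,b)}}: since both codes have length $p_2$, one compares $\dim + \dist$, notes that $\gcd(a,a\beta)=a$ kills the correction term so the weighted dimension is $\tfrac{1}{2}(k\beta+2)(k+1)$, and solves the resulting affine inequality $k\beta(a-1)q \geq \tfrac{k}{2}\left(ka^2\beta^2+3a\beta-k\beta-\beta-2\right)$ for $q$, exactly as you do. Your closing caveat is warranted: the condition $d=ka\beta\le q$ required by those propositions is \emph{not} subsumed by the stated threshold (e.g.\ $a=2$, $\beta=1$, $k=10$ gives threshold $16.5$ while $d=20$), so it must be carried as a standing assumption --- a point the paper itself leaves implicit.
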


Let us compare the performance over ${\FF}_{19}$ and in degree 16 of the 
generalized Reed--Muller code over ${\AA}^2$, the projective Reed--Muller 
code over $\PP^2$, and the weighted projective Reed--Muller codes over the 
five different weighted projective planes $\PP(1,2,2)$,  $\PP(1,2,4)$,  
$\PP(1,2,8)$,  $\PP(1,4,4)$ and $\PP(1,16,16)$.
 
We find that $\RM_{19}(16,2)$ has parameters $[361, 153, 57]$ and the projective 
counterpart $\PRM_{19}(16,2)$ has parameters $[381,153,76]$, whereas 
\begin{itemize}
\item
$\WPRM_{19}(16,2; (1,2,2))$ has parameters $[381,45,228]$,
\item
$\WPRM_{19}(16,2; (1,2,4))$ has parameters $[381,25,228]$,
\item
$\WPRM_{19}(16,2; (1,2,8))$ has parameters $[381,15,228]$, 
\item
$\WPRM_{19}(16,2; (1,4,4))$ has parameters $[381,15,304]$,
and
\item
$\WPRM_{19}(16,2; (1,16,16))$ has parameters $[381,3,361]$.
\end{itemize}
The affine and projective Reed--Muller codes above have performances
\[
\lambda(\RM_{19}(16,2))=0.581... \quad \text{and} \quad \lambda(\PRM_{19}(16,2))=0.601..., 
\]
whereas the performances of the above five weighted projective Reed--Muller codes are 
$0.716...$, $0.664...$, $0.637...$,  $0.837...$, and $0.955...$ respectively. 


\section*{Acknowledgement}
This work was initiated during a week-long IPAM workshop on Algebraic Geometry 
for Coding and Cryptography, that was held in UCLA during February 2016. 
The authors would like to thank the organizers of the workshop, namely, Everett Howe,
Kristin Lauter and Judy Walker for giving them this opportunity, and the anonymous
referee for various helpful comments.
The second author was partially supported by the European Commission under the ICT programme
with contract H2020-ICT-2014-1 645622 PQCRYPTO, and through the European Research
Council under the FP7/2007-2013 programme with ERC Grant Agreement 615722 MOTMELSUM.

\bibliographystyle{hplaindoi}
\bibliography{main}


\begin{subappendices}

\section{Appendix: Weighted projective spaces}
This appendix is aimed at providing a handy reference for weighted projective 
spaces over arbitrary fields. While some proofs are occasionally outlined, for 
most part we provide complete statements of results and suitable references 
where proofs can be found.

\subsection{Definitions of weighted projective spaces}

\subsubsection{WPS as a Proj functor}

Let $k$ be a field and let $\w = (a_{0}, \dots, a_{m})$ be a sequence of 
strictly positive integers. The condition
\[
\deg X_{i} = a_{i}, \qquad i = 0, \dots, m
\]
defines a gradation of type $\ZZ$ on the polynomial algebra 
$\sansS = k[X_{0}, \dots, X_{m}]$:
\[
\sansS = \bigoplus_{n \geq 0} \sansS_{n}
\]
such that $\sansS_{n} = 0$ if $n < 0$.
For a monomial $f = X_{0}^{r_{0}} \dots X_{m}^{r_{m}}$, we have
\[
\wdeg f = n \quad \Longleftrightarrow \quad a_{0} r_{0} + \dots + a_{m} r_{m} = n.
\]
We assume that the characteristic $p$ of $k$ is coprime to all 
$a_{i}$ ($0\le i \le m$), and that $\gcd(a_{0}, \dots, a_{m}) = 1$. The 
\emph{weighted projective space} (WPS) with sequence of weights $\w$ over $k$ 
is the scheme $\PP(\w) = \Proj \sansS(\w)$. If $\w = (1, \dots, 1)$, we recover
the usual projective space:
\[
\PP(1, \dots, 1) = \PP^{m}.
\]

\subsubsection{Quotients}

Let $G$ be an affine algebraic group over a field $k$ acting on an algebraic 
variety $X$ over $k$. A \emph{categorical quotient} of $X$ by $G$, 
see \cite[p.~92]{Dgv2003},  \cite[Ch.~V, \S 1]{SGA1}, \cite[Def.~0.5, p.~3]{GIT},
is a morphism $p \colon X \longrightarrow Y$, where $Y$ is a variety over~$k$, 
such that
\begin{enumerate}
\item \label{Qsur}
      $\mathsf{p}$ is surjective.
\item \label{Qcat}
      $\mathsf{p}$ is $G$-invariant, that is, $G$-equivariant, that is, 
      $\mathsf{p}$ is constant on the orbits of $G$.  
\item \label{Quniv}
      If $f\colon X \longrightarrow Z$ is a $k$-morphism constant on the orbits
      of~$G$, then there exists a $k$-morphism $\varphi\colon Y \longrightarrow Z$
      such that $f = \varphi \circ \mathsf{p}$.
      \[
      \xymatrix{
      X \ar[d]_{\mathsf{p}} \ar[rd]^{f} \\
      Y \ar[r]_{\varphi} & Z}
      \]
\end{enumerate}
The couple $(Y,\mathsf{p})$ is unique up to unique isomorphism.  A categorical
quotient is called a \emph{geometric quotient}, 
see \cite[p.~92]{Dgv2003}, \cite[Def.~0.6, p.~4]{GIT}, if moreover
\begin{enumerate}
\setcounter{enumi}{3}
\item \label{Qsub}
      $\mathsf{p}$ is open. 
\item \label{Qgeo}
      The fibres of $\mathsf{p}$ are the orbits of $G$ in $X$.
\end{enumerate}

\subsubsection{WPS as a quotient of the punctured affine space}

The gradation $\w$ of $\sansS$ defines an action
\[
\begin{tikzcd}
\sigma \colon \GG_{m} \times \Aff^{m + 1} \arrow[r] & \Aff^{m + 1}
\end{tikzcd}
\]
of $\GG_{m}$ on $\Aff^{m + 1}$ such that
\[
\sigma(t).(x_{0}, \dots, x_{m}) = t.(x_{0}, \dots, x_{m}) = (t^{a_0}x_{0}, \dots,  t^{a_m} x_{m}).
\]
The corresponding morphism
\[
\begin{tikzcd}
\sigma^{\flat} \colon \sansS \arrow[r] & k[T, T^{-1}] \otimes \sansS \simeq \sansS[T, T^{-1}]
\end{tikzcd}
\]
is such that
\[
[\sigma^{\flat}f](T, X_{0}, \dots, X_{m}) = f(T^{a_{0}} X_{0}, \dots, T^{a_{m}} X_{m}).
\]
The algebra $\sansS[T, T^{-1}]$ is called the algebra of \emph{Laurent polynomials}
over~$\sansS$. The group $\GG_{m}$ operates as well on the pointed cone
\[
\VV = \Aff^{m + 1} \setminus \{0\}.
\]

\goodbreak

\begin{theorem}
\label{projgq}
The morphism
\[
\begin{tikzcd}
p \colon \VV \arrow[r] & \VV/\GG_{m}
\end{tikzcd}
\]
is a geometric quotient, and there is an isomorphism
\[
\begin{tikzcd}
\iota \colon \VV/ \GG_{m} \arrow[r,"\sim"] & \PP(\w)
\end{tikzcd}
\]
\end{theorem}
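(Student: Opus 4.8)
The plan is to build the quotient chart by chart and then recognize that these charts are exactly the ones defining $\Proj \sansS(\w)$. First I would cover $\VV = \AA^{m+1} \setminus \{0\}$ by the standard $\GG_m$-stable affine opens $\VV_i = D(X_i) = \operatorname{Spec} \sansS_{X_i}$, which cover $\VV$ precisely because removing the origin forces at least one coordinate to be nonzero. On the target side, recall that $\PP(\w) = \Proj \sansS$ is covered by the basic charts $D_+(X_i) = \operatorname{Spec} \sansS_{(X_i)}$, where $\sansS_{(X_i)} = (\sansS_{X_i})_0$ is the degree-zero part of the localization. The key observation is that the $\GG_m$-action scales a monomial of weighted degree $n$ by $t^n$, so that $(\sansS_{X_i})^{\GG_m} = (\sansS_{X_i})_0 = \sansS_{(X_i)}$; the inclusion $\sansS_{(X_i)} \hookrightarrow \sansS_{X_i}$ then induces the candidate local quotient $p_i \colon \VV_i \to D_+(X_i)$.

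Next I would show that each $p_i$ is a geometric quotient. Since the torus $\GG_m$ is reductive and $\VV_i$ is affine, geometric invariant theory \cite{GIT} gives that $\VV_i \to \operatorname{Spec}(\sansS_{X_i})^{\GG_m}$ is a categorical quotient, yielding properties (\ref{Qsur})--(\ref{Quniv}). To upgrade this to a geometric quotient---that is, to obtain (\ref{Qsub}) and (\ref{Qgeo})---I would check that every $\GG_m$-orbit in $\VV_i$ is closed: because the $i$-th coordinate is nonzero throughout $\VV_i$, the stabilizer of any point lies in the finite group $\bmu_{a_i}$ of $a_i$-th roots of unity; in particular $\VV_i$ contains no fixed points, so each orbit is one-dimensional with empty boundary, hence closed in $\VV_i$. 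For a reductive group acting on an affine variety with all orbits closed, the affine GIT quotient is geometric \cite{GIT}, which delivers $p_i$.

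Then I would glue. Over the overlaps $\VV_i \cap \VV_j = D(X_iX_j)$ the invariant ring is $\sansS_{(X_iX_j)}$, which is exactly the transition data used in the construction of $\Proj \sansS$; hence the maps $p_i$ agree there and patch to a single morphism $p \colon \VV \to \PP(\w)$. All five defining properties of a geometric quotient are local on the target---surjectivity, openness and the fibre condition transparently so, and the universal property because local factorizations through the $D_+(X_i)$ are unique and therefore glue---so $p$ is a geometric quotient. The isomorphism $\iota \colon \VV/\GG_m \xrightarrow{\sim} \PP(\w)$ is then tautological, since over each chart the structure sheaf of the quotient is the invariant ring $\sansS_{(X_i)}$, which is precisely the sheaf defining $\Proj \sansS$.

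The main obstacle is the passage from a categorical to a geometric quotient at the local step: establishing closedness of orbits and controlling the finite stabilizers $\bmu_{a_i}$. Here the standing hypothesis that the characteristic of $k$ is coprime to each $a_i$ is essential, as it keeps these stabilizer schemes étale and the action tame; without it one generally obtains only a good (categorical) quotient rather than a geometric one. A secondary point needing care is the descent of the universal property from the charts to the global quotient, which works because each $p_i$ is an affine morphism, so any $G$-invariant test morphism out of $\VV$ factors locally and the pieces glue uniquely.
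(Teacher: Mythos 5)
Your proposal is correct, and it supplies an argument that the paper itself never writes down: the paper's ``proof'' of Theorem~\ref{projgq} is only a pointer to Dolgachev (\cite[1.21, p.~36]{Dgv1982}, \cite[Ex.~6.2, p.~96]{Dgv2003}), and your chart-by-chart reduction is precisely the standard argument underlying those references. The key steps all hold up: the opens $D(X_i)$ are $\GG_m$-stable and cover $\VV$; the invariants of the localization are its degree-zero part, $(\sansS_{X_i})^{\GG_m}=\sansS_{(X_i)}$, which is exactly the coordinate ring of the chart $D_+(X_i)$ of $\Proj\sansS$; stabilizers of points of $D(X_i)$ lie in $\bmu_{a_i}$, so every orbit there is one-dimensional, so orbit boundaries (which consist of strictly lower-dimensional orbits, i.e.\ fixed points, of which there are none) are empty and all orbits are closed; and for a reductive group acting on an affine variety with closed orbits the affine GIT quotient is geometric. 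The gluing step is also fine, since the geometric-quotient conditions are local on the target and the categorical factorizations agree on overlaps by uniqueness.

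One quibble, which is a misstatement in your commentary rather than a gap in the proof: the claim that the standing hypothesis $\gcd(\mathrm{char}\,k,a_i)=1$ is \emph{essential} here, and that without it one only gets a good quotient, is not correct. If $\mathrm{char}\,k=p$ divides $a_i$, the stabilizers become non-reduced finite group schemes, but they are still finite, orbits in $D(X_i)$ are still one-dimensional and closed, and the quotient is still geometric; for instance $\GG_m$ acting on $\AA^1\setminus\{0\}$ by $t\cdot x=t^p x$ in characteristic $p$ has geometric quotient $\operatorname{Spec}k$. The coprimality assumption earns its keep elsewhere in the appendix---in the \'etaleness of $\pi\colon\PP^m\to\PP(\w)$ away from the ramification locus, and in the rational point counts of Lemma~\ref{NbPts}---not in Theorem~\ref{projgq}. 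Since the hypothesis is in force throughout the paper anyway, this does not affect the validity of your argument.
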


\begin{proof}
\cite[1.21, p. 36]{Dgv1982}, \cite[Ex. 6.2, p. 96]{Dgv2003}.\qed
\end{proof}

The scheme $\PP(\w)$ is a normal irreducible projective variety, of dimension~$m$ 
\cite[p.~5]{GIT}, \cite[1.3.3]{Dgv1982}.

\subsubsection{WPS as a finite quotient of the projective space}

For any integer $n > 0$, we denote by $\bmu_{n}$ the finite group scheme of 
$n$-th roots of unity, with coordinate ring $k[X]/(X^{n} - 1)$. We put
\[
G = G_{\w} = \bmu_{a_{0}} \times \dots \times \bmu_{a_{m}}.
\]
Then $\card{G_{\w}} = a$, with $a = a_{0} \dots a_{m}$, and $G_{\w} \simeq \bmu_{a}$
if and only if $a$ is the l.c.m. of $a_{0}, \dots, a_{m}$, that is, if and only 
if $a_{0}, \dots, a_{m}$ are pairwise coprime. There is a linear action of $G$ 
on $\PP^{m}$ given by
\[
(\zeta_{0}, \dots, \zeta_{m}).(x_{0} : \ldots : x_{m}) 
   = (\zeta_{0} x_{0} : \ldots : \zeta_{m} x_{m})
\]
The morphism $\pi_{0}\colon \VV \to \VV$ given by
\[
\pi_{0}(x_{0}, \dots, x_{m}) = (x_{0}^{a_0}, \dots, x_{m}^{a_m})
\]
induces a diagram
\[
\xymatrix{
\VV \ar[d]^{p} \ar[rr]^{\pi_{0}}  &                          & \VV\ar[d]^{p} \\
\PP^{m} \ar[rr]^{\pi} \ar[rd]^{p} &                          & \PP(\w)       \\
                                  & \PP^{m}/G \ar[ru]^{\sim}                  } 
\]
Let $G$ be an affine algebraic group over a field $k$ acting on an algebraic
variety $X$ over~$k$. For the definition of a \emph{good geometric quotient} of
$X$ by~$G$, see \cite[p. 92]{Dgv2003}. We denote by $G(x)$ the \emph{stabilizer}
or \emph{isotropy group} of~$X$. The action is \emph{free} at $x$ if $G(x)$ is
trivial.

\goodbreak

\begin{proposition}
\label{GGQ}
The morphism $\pi \colon  \PP^{m} \longrightarrow \PP(\w)$ given by
\[
\pi(x_{0} : \ldots : x_{m}) = (x_{0}^{a_0}: \ldots : x_{m}^{a_m})
\]
is a good geometric quotient of $X$ by $G$, and therefore enjoys the following 
properties: 
\begin{enumerate}
\item $\pi$ is surjective, finite and submersive.
\item The fibres of $\pi$ are the orbits of $G$ in $\PP^{m}$.
\item If $x \in \PP^{m}$ and $y = \pi(x) \in \PP(\w)$, the residual field 
     $\kappa(x)$ is a Galois extension of $\kappa(y)$ and the canonical 
     homomorphism of $G(x)$ in the group $\Gal(\kappa(x)/\kappa(y))$ of 
     $\kappa(y)$-automorphisms of $\kappa(x)$ is surjective.
\end{enumerate}
\end{proposition}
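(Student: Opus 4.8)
The plan is to establish Proposition~\ref{GGQ} by verifying that $\pi$ satisfies the defining axioms of a good geometric quotient and then reading off the three listed properties. First I would observe that the map $\pi_0\colon \VV \to \VV$, $(x_0,\dots,x_m)\mapsto (x_0^{a_0},\dots,x_m^{a_m})$, descends through the quotient maps $p$ to give $\pi$; this is exactly the commutativity of the diagram preceding the statement, so that $\pi\circ p = p\circ \pi_0$ on $\VV$. Since $\pi_0$ is a finite morphism (each coordinate is a power map $t\mapsto t^{a_i}$, and we have assumed $\operatorname{char} k \nmid a_i$, so these are separable and finite flat), and since $p\colon \VV\to \PP^m$ and $p\colon \VV\to\PP(\w)$ are geometric quotients by Theorem~\ref{projgq}, the induced $\pi$ is again finite. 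Finiteness together with surjectivity of $\pi_0$ gives surjectivity and submersiveness of $\pi$, establishing property~(1).

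Next I would identify the fibres of $\pi$ with the $G$-orbits, establishing property~(2) and hence the ``geometric'' part of the quotient. The key computation is that $\pi(x_0:\cdots:x_m) = \pi(x_0':\cdots:x_m')$ forces $x_i'^{a_i} = \lambda^{a_i} x_i^{a_i}$ for a common scalar $\lambda$ coming from the projective rescaling, which means $x_i' = \zeta_i \lambda x_i$ for some $a_i$-th root of unity $\zeta_i\in\bmu_{a_i}$. Absorbing $\lambda$ into the projective equivalence, the two points differ exactly by the action of $(\zeta_0,\dots,\zeta_m)\in G_{\w}$. Care is needed here: because $\bmu_{a_i}$ is a group scheme and we are working over an arbitrary (possibly non-algebraically-closed) field $k$, this orbit identification should be phrased scheme-theoretically rather than merely on $k$-points, comparing the fibre scheme $\pi^{-1}(y)$ with the orbit $G\cdot x$ as subschemes. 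This is where I expect the main subtlety to lie.

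To finish, I would verify the universal and openness properties packaged into ``good geometric quotient'' and then deduce property~(3) about residue fields. Openness of $\pi$ follows from finiteness plus flatness (the power maps are flat under the coprimality-to-characteristic hypothesis), and the categorical/universal property descends from that of $p$ via the diagram. For property~(3), given $x\in\PP^m$ with $y=\pi(x)$, the finiteness of $\pi$ makes $\kappa(x)/\kappa(y)$ a finite extension; I would argue it is normal, and in fact Galois, by exhibiting the stabilizer $G(x)$ as acting on $\kappa(x)$ with fixed field $\kappa(y)$, so that the natural map $G(x)\to \Gal(\kappa(x)/\kappa(y))$ is surjective. The cleanest route is to invoke the standard structure theory of finite group-scheme quotients (as in the cited \cite{Dgv2003}), where surjectivity onto the Galois group is the precise statement that the quotient is geometric at $x$.

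The hard part will be handling everything functorially over a general field with group schemes $\bmu_{a_i}$ rather than abstract cyclic groups of roots of unity: the orbit-equals-fibre statement and the residue-field assertion both require the scheme-theoretic action to be understood at non-rational and possibly non-reduced points, and this is precisely where the hypothesis $\operatorname{char} k\nmid a_0\cdots a_m$ does its work, guaranteeing that the $\bmu_{a_i}$ are étale and the power maps separable. I would lean on the references \cite{Dgv1982,Dgv2003} for the technical heart, since reproving the general good-geometric-quotient machinery from scratch would be disproportionate.
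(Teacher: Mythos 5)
Your overall strategy---verify the axioms by hand where easy and invoke the general theory of quotients by finite group schemes for the hard parts---is in fact exactly what the paper does, since the paper's entire ``proof'' is the citation to Serre, to SGA1 (Ch.~V, Props.~1.3 and 1.8), and to Dolgachev's notes; your sketch of the fibre-equals-orbit computation via roots of unity is a correct elaboration of what those references contain, and your caution about phrasing it scheme-theoretically over a non-closed field is well placed.

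There is, however, one concrete error in your outline: you assert that $\pi\colon \PP^m \to \PP(\w)$ is flat and deduce openness from ``finiteness plus flatness.'' Flatness holds for $\pi_0\colon \VV \to \VV$ (the coordinate ring $k[x_0,\dots,x_m]$ is \emph{free} over $k[x_0^{a_0},\dots,x_m^{a_m}]$, with basis the monomials $x^\alpha$, $0 \le \alpha_i < a_i$), but it does \emph{not} descend to $\pi$, precisely because $\PP(\w)$ is singular. Already for $\w = (1,1,2)$, near the singular point $(0:0:1)$ the map $\pi$ is the quotient $\AA^2 \to \AA^2/\bmu_2$ with $\bmu_2$ acting by $(x,y)\mapsto(-x,-y)$, and $k[x,y]$ is not flat over the invariant ring $k[x^2,xy,y^2]$: a finite flat module would be locally free of constant rank, but the fibre over the cone point is $k[x,y]/(x,y)^2$, of length $3$, while the generic fibre has length $2$. (This is the general phenomenon that a finite surjection from a regular variety onto a singular one cannot be flat.) The conclusion you want is still true, but for a different reason: $\pi^{-1}(\pi(U)) = \bigcup_{g\in G} gU$ is open for any open $U$, and $\pi$ is submersive because it is finite surjective, hence closed; together these give openness. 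Relatedly, your deduction of finiteness of $\pi$ ``from the diagram'' is too quick---finiteness does not formally descend through the quotient maps $p$; the clean argument is that $\pi$ is proper (proper source, separated target) and quasi-finite (fibres are orbits of a finite group scheme), hence finite, or one simply quotes SGA1, V.1.3, as the paper does.
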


\begin{proof}
See \cite[Ch.~III, Prop.~19]{Serre1988},  \cite[Ch.~V, Props.~1.3 and 1.8]{SGA1},
\cite[Ex.~6.1, p.~95]{Dgv2003}.\qed
\end{proof}

Notice that $\deg \pi = a_0 \ldots a_m$. The Jacobian matrix of $\pi$ is
\[
d\pi(x) = \Diag(a_0 x_{0}^{a_0 - 1}, \ldots, a_m x_{m}^{a_m - 1}),
\]
and
\[
\det d\pi(x) = (a_{0} \ldots a_{m}) x_{0}^{a_0 - 1} \dots x_{m}^{a_m - 1}
\]
If we denote by $H_{i}$ the hyperplane $x_{i} = 0$, the ramification locus is
\[
R = \bigcup_{a_{i} > 1} H_{i}.
\]
Then $\pi$ is \'etale outside $R$, which clearly contains the singular set.

\begin{proposition}
The scheme $\PP(\w)$ is Cohen--Macaulay.
\end{proposition}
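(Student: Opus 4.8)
The plan is to exploit the finite quotient presentation of $\PP(\w)$ furnished by Proposition~\ref{GGQ}, together with the principle that a module direct summand of a Cohen--Macaulay module is again Cohen--Macaulay. Concretely, the morphism $\pi\colon \PP^m \to \PP(\w)$ is finite, surjective, and exhibits $\PP(\w)$ as the good geometric quotient of $\PP^m$ by the finite group scheme $G = G_\w = \bmu_{a_0}\times\dots\times\bmu_{a_m}$; moreover $\PP^m$ is smooth over $k$, hence regular, hence Cohen--Macaulay. Since being Cohen--Macaulay is local on the target, it suffices to prove that $\mathcal O_{\PP(\w),y}$ is Cohen--Macaulay for every $y \in \PP(\w)$.

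First I would record the splitting. Since $\card{G_\w} = a_0\cdots a_m$ is invertible in $k$ (automatic when $\operatorname{char} k = 0$, and guaranteed otherwise by our standing hypothesis that $p$ is coprime to each $a_i$), the group scheme $G_\w$ is linearly reductive and the averaging (Reynolds) operator $\tfrac{1}{\card{G_\w}}\sum_{g} g$ is available. As $\pi$ is the good geometric quotient by $G_\w$, one has $\mathcal O_{\PP(\w)} \cong (\pi_*\mathcal O_{\PP^m})^{G_\w}$, and the Reynolds operator realizes this sheaf of invariants as a direct summand of $\pi_*\mathcal O_{\PP^m}$ as a module over $\mathcal O_{\PP(\w)}$.

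Next I would promote ``$\PP^m$ Cohen--Macaulay'' to a statement about $\pi_*\mathcal O_{\PP^m}$ as an $\mathcal O_{\PP(\w)}$-module. Fix $y \in \PP(\w)$, set $A = \mathcal O_{\PP(\w),y}$, and let $B = (\pi_*\mathcal O_{\PP^m})_y$, which is finite over $A$. Because $\pi$ is finite surjective between irreducible varieties of the same dimension $m$ (both properties recorded earlier), each point $x$ over $y$ satisfies $\dim \mathcal O_{\PP^m,x} = \dim A = m$. Since $B$ is the product of the regular local rings $\mathcal O_{\PP^m,x}$ over the finitely many such $x$, the depth of $B$ as an $A$-module equals $m = \dim A$; that is, $B$ is a maximal Cohen--Macaulay $A$-module. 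The summand inclusion $A \hookrightarrow B$ from the previous step then forces $\operatorname{depth} A \ge \operatorname{depth} B = \dim A$, so $A$ is Cohen--Macaulay; letting $y$ vary proves the proposition.

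I expect the only delicate point to be the bookkeeping in the middle step: verifying that $(\pi_*\mathcal O_{\PP^m})_y$ is genuinely \emph{maximal} Cohen--Macaulay over $A$ rather than merely Cohen--Macaulay as a ring. This rests on the depth-comparison lemma for finite ring extensions and on the equidimensionality of $\pi$. Everything else is formal: the invertibility of $\card{G_\w}$ is immediate from the hypothesis on $p$, and the direct-summand principle is standard (Hochster--Eagon). As an alternative one could argue purely ring-theoretically: $\sansS = k[X_0,\dots,X_m]$ is a polynomial ring and hence Cohen--Macaulay for the weighted grading as well, and the passage from the Cohen--Macaulayness of a graded ring to that of its $\Proj$ via the homogeneous-localization depth formula yields the same conclusion. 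The quotient argument above, however, is the most economical given the tools already developed in this appendix.
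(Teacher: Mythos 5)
Your argument is correct in substance, but it takes a genuinely different route from the paper's: the paper offers no argument at all, disposing of the proposition with the citation \cite[Th.~3A.1]{B-M}. What you give instead is the Hochster--Eagon style proof: realize $\mathcal{O}_{\PP(\w),y}$ as a module direct summand of a finite maximal Cohen--Macaulay module, namely the semilocal ring of $\PP^m$ along the fibre of the quotient map $\pi$ of Proposition~\ref{GGQ}, using the identification $\mathcal{O}_{\PP(\w)}\cong(\pi_*\mathcal{O}_{\PP^m})^{G_{\w}}$ (which the paper itself records, in the later subsection on weighted forms) and the splitting of invariants. This is self-contained given the toolkit already developed in the appendix; what the paper's citation buys is brevity and, in the reference, a stronger (arithmetic) Cohen--Macaulayness statement.

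Three points in your write-up deserve tightening, none fatal. (i) The stalk $B=(\pi_*\mathcal{O}_{\PP^m})_y$ is a semilocal \emph{domain} whose localizations at its maximal ideals are the $\mathcal{O}_{\PP^m,x}$; it is not literally a product of those local rings (a product is never a domain, while $B$ embeds in the function field of $\PP^m$). This costs nothing: the depth-comparison lemma you invoke computes $\operatorname{depth}_A B$ as the minimum of $\operatorname{depth} B_{\mathfrak{n}}$ over the maximal ideals $\mathfrak{n}$, which is all the argument needs. (ii) $\dim A = m$ only when $y$ is a closed point; either restrict to closed points (which suffices, since Cohen--Macaulayness localizes) or keep only the equality $\dim\mathcal{O}_{\PP^m,x}=\dim A$, which your finiteness-plus-catenarity argument does yield at every point. (iii) The literal averaging operator $\frac{1}{\card{G_{\w}}}\sum_{g} g$ presupposes that $G_{\w}$ is a constant group, i.e.\ that $k$ contains the relevant roots of unity. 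Since $G_{\w}$ is a product of $\bmu_{a_i}$'s it is a diagonalizable group scheme, hence linearly reductive over any field, so the canonical splitting of the invariants exists regardless (in fact even without the coprimality hypothesis on the characteristic); alternatively pass to $\overline{k}$ and descend, Cohen--Macaulayness being preserved and reflected by faithfully flat base change. Finally, your ring-theoretic aside is glibber than it looks: for a non-standard grading the passage from Cohen--Macaulayness of $\sansS$ to that of $\Proj\sansS$ is not a formal localization statement, since on the charts $D_+(f)$ the element $f$ has degree $>1$ and $\sansS_f$ is not finite over $(\sansS_f)_0$; one needs a Veronese reduction or local cohomology there, so the quotient argument you chose as primary is indeed the right one.
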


\begin{proof}
Cf.~\cite[Th.~3A.1]{B-M}.\qed
\end{proof}

\subsection{The singular locus}

We say that the sequence of weights $\w$ is \emph{normalized} \cite[p.~185]{Dimca}
or \emph{well formed} \cite[Def.~3.3.4]{Hosgood} if
\[
\gcd(a_{0}, \dots, \widehat{a_{i}}, \dots, a_{m}) = 1 
      \quad \text{for every} \ 0 \leq i \leq m.
\]
Any weighted projective space is isomorphic to a well-formed weighted 
projective space [\emph{loc.~cit}]. If $p$ is a prime number, we put
\[
I(p) = \set{i \in \{1, \dots, m \}}{p \ \text{divides} \  a_{i}}.
\]
The set $\Sigma = \Sigma(\w)$ of prime numbers such that $I(p) \neq \emptyset$
is finite, and $\w$ is \emph{normalized} if and only if $\card{I(p)} \leq m - 1$
for every $p$. The space
\[
S(p) = \set{x \in \PP(\w)}{x_{i} = 0  \ \text{if} \ i \notin I(p)}
\]
is a weighted projective space of dimension $\card{I(p)}$.

\begin{proposition}
\label{Dimca}
Assume that $\w$ is normalized.
\begin{enumerate}
\item The decomposition of $\Sing \PP(\w)$ into irreducible components is
      \[
      \Sing \PP(\w) = \bigcup_{p \in \Sigma} S(p).
      \]
\item Moreover
      \[
      \Sing \PP(\w) = \set{x \in \Sing \PP(\w)}{G_{x} \neq \{1\}}.
      \]
\end{enumerate}
\end{proposition}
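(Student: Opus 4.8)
The plan is to identify the singular locus of $\PP(\w)$ by pulling back the smooth model $\PP^m$ along the finite map $\pi\colon \PP^m \to \PP(\w)$ of Proposition~\ref{GGQ}, and to use the fact that $\pi$ is a good geometric quotient by $G = G_\w$. Since $\pi$ is finite and surjective, and $\PP^m$ is smooth, the only places where $\PP(\w)$ can fail to be smooth are images of points where the $G$-action is \emph{not} free, i.e.\ points $x \in \PP^m$ with nontrivial stabilizer $G(x)$. Indeed, étale-locally at a point where the action is free, $\pi$ is an isomorphism onto its image (this is exactly the content of $\pi$ being étale outside the ramification locus $R = \bigcup_{a_i > 1} H_i$), so such image points are smooth; conversely, at a point with nontrivial isotropy the quotient typically acquires a cyclic-quotient singularity. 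This already suggests both assertions of the proposition simultaneously, so I would aim to prove them together.

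First I would make the stabilizer computation explicit. For $x = (x_0 : \dots : x_m) \in \PP^m$, an element $(\zeta_0, \dots, \zeta_m) \in G$ fixes $x$ precisely when $(\zeta_0 x_0 : \dots : \zeta_m x_m) = (x_0 : \dots : x_m)$ in $\PP^m$, i.e.\ when there is a common scalar $\lambda$ with $\zeta_i x_i = \lambda x_i$ for all $i$; thus $\zeta_i = \lambda$ on the support $\{i : x_i \neq 0\}$. Since each $\zeta_i$ is an $a_i$-th root of unity, a nontrivial stabilizer forces a common prime $p$ dividing all the $a_i$ with $x_i \neq 0$ (so that a nontrivial $p$-th root of unity can serve as the common $\lambda$). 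Translating the support condition, this is exactly the condition that $x$ lie in some $S(p) = \{x : x_i = 0 \text{ if } i \notin I(p)\}$ for a prime $p \in \Sigma$. This computation should deliver part (2): the locus of non-free points, pushed down via $\pi$, is precisely $\bigcup_{p \in \Sigma} S(p)$, and I would check that these are exactly the singular points rather than merely containing them.

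Second, to upgrade the set-theoretic description to the irreducible-component decomposition of part (1), I would observe that each $S(p)$ is itself a weighted projective space (of dimension $\card{I(p)}$, as noted in the excerpt), hence irreducible, and that the normalization (well-formedness) hypothesis guarantees $\card{I(p)} \leq m-1$, so each $S(p)$ is a proper closed irreducible subvariety of the correct codimension to be a genuine component of the singular locus. I would then verify there are no redundant inclusions $S(p) \subseteq S(p')$ for distinct primes by comparing the index sets $I(p)$, and confirm that each $S(p)$ actually consists of singular points (not just non-free points) — here is where I would invoke that a cyclic quotient singularity of order divisible by $p$ genuinely fails to be smooth, using Proposition~\ref{GGQ}(3) on the Galois action on residue fields to see the local structure is a nontrivial quotient.

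The main obstacle I anticipate is the final implication that every non-free point is genuinely singular (the reverse inclusion in the set-theoretic statement, and the verification that each $S(p)$ is a full component rather than embedded in the smooth locus). Freeness failing does not \emph{a priori} force singularity for a general quotient, so I would need to exploit the specific structure: that $\PP(\w)$ is normal (from Theorem~\ref{projgq}) and Cohen--Macaulay, and that the local model at a point of $S(p)$ is a quotient of affine space by a nontrivial cyclic group acting without pseudo-reflections (the well-formedness hypothesis is precisely what removes pseudo-reflections, via the Chevalley--Shephard--Todd theorem). I would likely cite the standard classification of cyclic quotient singularities, or defer to the reference \cite{Dimca} from which the statement is drawn, to conclude that the quotient is singular exactly along the non-free locus. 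This normal-form analysis is the technical heart; the stabilizer computation and the irreducibility bookkeeping are comparatively routine once it is in place.
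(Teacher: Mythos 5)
Your overall plan (singular locus $=$ image of the non-free locus, with well-formedness and Chevalley--Shephard--Todd supplying the equivalence between ``non-free'' and ``singular'') is the standard route --- indeed the paper offers no argument of its own, its entire proof being the citation to Dimca, whose treatment follows essentially this line. But your execution has a concrete error: the stabilizer computation for the $G$-action on $\PP^{m}$ is wrong. An element $(\zeta_{0},\dots,\zeta_{m})\in G$ fixes $x=(x_{0}:\dots:x_{m})$ if and only if $\zeta_{i}=\lambda$ for all $i$ in the support of $x$, \emph{with no condition at all} on the $\zeta_{j}$ for which $x_{j}=0$; thus $G(x)\cong \bmu_{g}\times\prod_{x_{j}=0}\bmu_{a_{j}}$ with $g=\gcd(a_{i}:x_{i}\neq 0)$, and a nontrivial stabilizer does \emph{not} force a common prime dividing the weights on the support. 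Consequently the non-free locus, pushed down, is $\bigl(\bigcup_{a_{j}>1}\{x_{j}=0\}\bigr)\cup\bigcup_{p}S(p)$, strictly larger than $\bigcup_{p}S(p)$ in general. Test case $\PP(1,1,2)$ (well formed): the point $(1:1:0)$ has stabilizer $\bmu_{2}$ yet is a smooth point, while $\Sing\PP(1,1,2)=S(2)=\{(0:0:1)\}$. The same example refutes your identification of freeness with lying outside $R$: the point $(0:0:1)$ lies outside $R=\{x_{2}=0\}$ but has stabilizer $\bmu_{2}$ acting as $-\mathrm{id}$ on the local chart --- and it is exactly the singular point --- whereas generic points of $H_{2}\subset R$ are non-free with smooth image. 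The upshot is a real gap: your argument never proves smoothness of $\PP(\w)$ along the images of the hyperplanes $H_{j}$ with $a_{j}>1$ (away from $\bigcup_{p}S(p)$), precisely where ``free $\Rightarrow$ smooth'' does not apply and where one needs the direction of Chevalley--Shephard--Todd you never invoke (stabilizer generated by pseudo-reflections $\Rightarrow$ smooth quotient).

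The missed stabilizer elements --- those supported on vanishing coordinates --- are exactly the pseudo-reflections, so the correct statement is that $\Sing\PP(\w)$ is the image of the points whose stabilizer is \emph{not generated by} pseudo-reflections, and well-formedness is what makes this residual condition equivalent to ``some prime divides all weights on the support.'' Two clean repairs: (i) run your support argument for the $\GG_{m}$-action on $\VV=\Aff^{m+1}\setminus\{0\}$, whose isotropy group at $(x_{0},\dots,x_{m})$ really is $\bmu_{\gcd(a_{i}:x_{i}\neq0)}$ --- there your computation is correct verbatim, and this is how $G_{x}$ in part (2) should be read; or (ii) argue chart by chart via the cyclic actions $\bmu_{a_{i}}$ on $\Aff^{m}_{\{i\}}$ of type $\frac{1}{a_{i}}(a_{0},\dots,\widehat{a_{i}},\dots,a_{m})$ from Proposition~\ref{pieces}: well-formedness rules out pseudo-reflections in these actions outright, so (in characteristic prime to the $a_{i}$) the quotient map is \'etale exactly on the free locus and Chevalley--Shephard--Todd gives singularity exactly on the non-free locus, which one checks equals $U_{i}\cap\bigcup_{p}S(p)$; this is Dimca's argument. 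One further caveat on your ``bookkeeping'': inclusions $S(p)\subseteq S(p')$ \emph{can} occur for well-formed weights (e.g.\ $\w=(1,1,6,6,3)$, where $I(2)=\{2,3\}\subsetneq I(3)=\{2,3,4\}$), so the list $\{S(p)\}_{p\in\Sigma}$ must be pruned to its maximal members when exhibiting irreducible components; this affects only the phrasing of part (1), not the set-theoretic identity.
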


\begin{proof}
See Dimca \cite[p.~185]{Dimca}.\qed
\end{proof}

Notice that $\dim \Sing \PP(\w) \leq m - 2$, that is, $\PP(\w)$ is regular in
codimension one, as it already follows from normality.

\begin{corollary}
Assume that $\w$ is normalized.
\begin{enumerate}
\item \label{RAM1}
      If $(x_{0} : \ldots : x_{m}) \in \Sing \PP(\w)$, then  $x_{i} = 0$ 
      for at least one $i$.
\item \label{RAM2}
      If
      \[
      \gcd(a_{j}, a_{j}) = 1 \quad \text{for every couple} \ (i, j)  \ \text{with} \ j \neq i,
      \]
      then
      \[
      \Sing \PP(\w) = \{P_{0}, \dots, P_{m}\},
      \]
      where $P_{i}$ are the $m + 1$ vertices $(0 : \ldots : 1 : \ldots  :  0)$.
\end{enumerate}
\end{corollary}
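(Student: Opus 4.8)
The plan is to read off both statements from the irreducible decomposition $\Sing \PP(\w) = \bigcup_{p \in \Sigma} S(p)$ furnished by Proposition~\ref{Dimca}(1), combined with an elementary translation of the two hypotheses into bounds on the sizes $\card{I(p)}$. No new geometry is needed; everything reduces to bookkeeping against that decomposition.

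For the first item I would argue as follows. Let $x = (x_0 : \ldots : x_m) \in \Sing \PP(\w)$. By the decomposition there is a prime $p \in \Sigma$ with $x \in S(p)$, i.e.\ $x_i = 0$ for every index $i \notin I(p)$. Since $\w$ is normalized, $\card{I(p)} \leq m-1$, so $I(p)$ is a proper subset of $\{0, 1, \ldots, m\}$; in fact it omits at least two indices. Choosing any $i \notin I(p)$ yields $x_i = 0$, which is precisely the claim that at least one coordinate vanishes. This is consistent with the earlier observation that $\pi$ is \'etale away from the ramification locus $R = \bigcup_{a_i > 1} H_i$, so that singular points must lie on some coordinate hyperplane.

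For the second item the coprimality hypothesis does the combinatorial work. If a prime $p$ divided two distinct weights $a_i, a_j$, then $p \mid \gcd(a_i, a_j)$, contradicting $\gcd(a_i, a_j) = 1$; hence every prime divides at most one weight, and so $\card{I(p)} \leq 1$ for all $p$. (In particular $\card{I(p)} \leq 1 \leq m-1$ once $m \geq 2$, so $\w$ is automatically normalized and Proposition~\ref{Dimca} applies.) For $p \in \Sigma$ we therefore have $I(p) = \{i(p)\}$, where $i(p)$ is the unique position of a weight divisible by $p$. Consequently $S(p) = \set{x \in \PP(\w)}{x_j = 0 \text{ for all } j \neq i(p)}$ is the single coordinate vertex $P_{i(p)}$. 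Taking the union over $p \in \Sigma$ gives $\Sing \PP(\w) \subseteq \{P_0, \ldots, P_m\}$; more precisely $\Sing \PP(\w) = \set{P_i}{a_i > 1}$, since the vertex $P_i$ appears exactly when some prime divides $a_i$. When all weights exceed $1$ this recovers the full list of $m+1$ vertices.

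The main difficulty, such as it is, is not analytic but a matter of care: one must keep the index conventions for $I(p)$ and $S(p)$ straight, and be honest that the singular vertices are precisely those $P_i$ with $a_i > 1$, so that the clean formula $\Sing \PP(\w) = \{P_0, \ldots, P_m\}$ tacitly presupposes every weight is strictly larger than $1$ (otherwise one obtains the indicated subset, and the degenerate case $m = 1$ must be excluded). Everything else is immediate from Proposition~\ref{Dimca}.
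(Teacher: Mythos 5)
Your proposal is correct and takes essentially the same route as the paper: both items are read directly off the decomposition $\Sing \PP(\w) = \bigcup_{p \in \Sigma} S(p)$ of Proposition~\ref{Dimca}, with normalization giving $\card{I(p)} \leq m-1$ for item (1) and pairwise coprimality forcing $I(p)$ to be a singleton, hence $S(p) = \{P_i\}$, for item (2). Your extra caveat is also well taken: the clean equality $\Sing \PP(\w) = \{P_0, \dots, P_m\}$ really yields only $\set{P_i}{a_i > 1}$ (so it tacitly assumes every weight exceeds $1$), a point the paper's own two-line proof glosses over, just as it leaves uncorrected the typo $\gcd(a_j, a_j)$ for $\gcd(a_i, a_j)$ in the hypothesis.
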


\begin{proof}
From Proposition~\ref{Dimca} we deduce that if $x \in \Sing \PP(\w)$, then
$x \in S(p)$ for some $p \in \Sigma$, hence, $x_{i} = 0$ for at least one~$i$.
This proves~\eqref{RAM1}. If $a_{0}, \dots, a_{m}$ are pairwise coprime, then 
$I(p)$ has only one element $i$, and $S(p) = \{ P_{i} \}$. This 
proves~\eqref{RAM2}. \qed
\end{proof}

\subsection{Affine parts}

\subsubsection{Quotient of the affine space by a cyclic group}
\label{QAC}

We shall define an action of the cyclic group $\bmu_{a_{i}}$ on $\Aff^{m}$, 
which is called \emph{the action of type} 
\[
\frac{1}{a_{i}}(a_{0}, \dots, \widehat{a_{i}}, \dots, a_{m}).
\]
Let $\Aff_{\{i\}}^{m}$ the affine hypersurface of $\VV$ with equation $X_{i} = 1$.
Our action is defined by
\[
\zeta.(x_{0}, \dots, 1, \dots, x_{m}) =
   (\zeta^{a_{0}} x_{0}, \dots, 1, \dots, \zeta^{a_{m}} x_{m}), \quad \zeta \in \bmu_{a_{i}},
\]
and we get a finite quotient
\[
\begin{tikzcd}
p \colon \Aff_{\{i\}}^{m} \arrow[r] & \Aff_{\{i\}}^{m}/\bmu_{a_{i}}.
\end{tikzcd}
\]
We have
\[
k[\Aff_{\{i\}}^{m}] = \sansS /(X_{i} - 1) = k[X_{0}, \dots, \widehat{X_{i}}, \dots, X_{m}].
\]
If
\[
k[\Aff_{\{i\}}^{m}]^{\inv} = k[\Aff_{\{i\}}^{m}/\bmu_{a_{i}}] 
                           = k[\Aff_{\{i\}}^{m}]^{\bmu_{a_{i}}},
\]
then \cite[Lem. 2.5, p. 11]{B-M}
\[
k[\Aff^{m}_{\{i\}}]^{\inv} = \bigoplus k[\Aff_{\{i\}}^{m}]_{n a_{i}}.
\]
If $\gcd(a_{j}, a_{i}) = 1$ for $j \neq i$, then the only point $x \in \Aff^{m}$
with non-trivial isotropy subgroup is $x = 0$, and the projection 
$\Aff_{\{i\}}^{m} \to \Aff_{\{i\}}^{m}/\bmu_{a_{i}}$ is \'etale outside $0$.

\subsubsection{Affine parts}

For $0 \leq i \leq m$, we consider the principal open subset
\[
V_{i} = \set{x \in \VV}{x_{i} \neq 0}.
\]
Then $k[V_{i}]$ is the localization of $\sansS$ with respect to $X_{i}$, namely
\[
k[V_{i}] = k\left[\frac{1}{X_{i}}\right] =
     \set{\frac{f}{X_{i}^{n}}}{f \in \sansS} \subset k(\Aff^{m + 1}).
\]
We put
\[
U_{i} = p(V_{i}) = V_{i}/\GG_{m} = \set{x = (x_0:\ldots:x_n)\in \PP(\w)}{x_{i} \neq 0}
\]
and we consider the $k$-subalgebra of degree $0$ elements of $k[V_{i}]$:
\begin{equation}
\label{quotalg}
k[V_{i}]^{0} =
    \set{\frac{f}{X_{i}^{n}}\in \sansS_{[i]}}{f \ \text{homogeneous}, \ n \geq 0, \ \deg f = n a_{i}}.
\end{equation}
Then
\[
k[U_{i}] = k[V_{i}]^{0} = k[V_{i}]^{\GG_{m}}.
\]

\begin{proposition}
\label{pieces}
With the preceding notation\textup{:}
\begin{enumerate}
\item The projection $p \colon \Aff_{\{i\}}^{m} \rightarrow U_{i}$ given by
      \[
      p(x_{0}, \dots,  1,  \dots, x_{m}) = (x_{0} : \ldots : 1 : \ldots  :  x_{m})
      \]
      is surjective and induces an isomorphism
      \[
      \begin{tikzcd}
      \varphi \colon \Aff_{\{i\}}^{m}/\bmu_{a_{i}}  \arrow[r,"\sim"] & U_{i},
      \end{tikzcd}
      \]
      with an inverse
      \[
      \begin{tikzcd}
      \psi \colon U_{i} \arrow[r,"\sim"] & \Aff_{\{i\}}^{m}/\bmu_{a_{i}}
      \end{tikzcd}
      \]
      such that
      \[
      \psi(x_{0} : \ldots : 1 : \ldots : x_{m}) = (x_{0}, \dots, 1, \dots, x_{m}).
      \]
\item The canonical homomorphism $p^{\flat} \colon k[U_{i}] \to k[\Aff_{\{i\}}^{m}]$ 
      given by
      \[
      p^{\flat} \left( \frac{f}{X_{i}^{n}} \right) = f(X_{0}, \dots, 1, \dots, X_{m}),
      \]
      for $f$ homogeneous, $n \geq 0$, $\wdeg f = n a_{i}$, is injective and 
      induces an isomorphism
      \[
      \begin{tikzcd}
      \varphi^{\flat} \colon k[U_{i}] \arrow[r, "\sim"] & k[\Aff^{m}_{\{i\}}]^{\inv},
      \end{tikzcd}
      \]
      with an inverse
      \[
      \begin{tikzcd}
      \psi^{\flat} \colon k[\Aff^{m}_{\{i\}}]^{\inv} \arrow[r] & k[U_{i}]
      \end{tikzcd}
      \]
      such that
      \[
      \psi^{\flat}(f) = \frac{f}{X_{i}^{n}},
      \]
      for $f \in k[\Aff^{m}_{\{i\}}]^{\inv}$, $\deg f = n a_{i}$. In particular
      \[
      \psi^{\flat}\left(X_{j}^{a_{i}}\right) = \frac{X_{j}^{a_{i}}}{X_{i}^{a_{j}}} \, .
      \]
\end{enumerate}
\end{proposition}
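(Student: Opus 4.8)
The plan is to prove part~(2) first, at the level of graded rings, and then to read off part~(1) formally. Both $U_{i}$ and $\Aff_{\{i\}}^{m}/\bmu_{a_{i}}$ are affine---the former is a standard affine chart of $\Proj \sansS(\w)$ with coordinate ring $k[U_{i}] = k[V_{i}]^{0}$ as in \eqref{quotalg}, the latter has coordinate ring $k[\Aff_{\{i\}}^{m}]^{\inv}$ as recalled in Section~\ref{QAC}---so the anti-equivalence between affine $k$-schemes and commutative $k$-algebras will turn a ring isomorphism $\varphi^{\flat}$ into the scheme isomorphism $\varphi$, with $\psi$ dual to $\psi^{\flat}$. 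First I would observe that $p^{\flat}$ is exactly the pullback of the composite $\Aff_{\{i\}}^{m} \hookrightarrow V_{i} \xrightarrow{p} U_{i}$: restricting functions on $V_{i}$ to the slice $X_{i} = 1$ is the substitution $X_{i} \leftarrow 1$, and its restriction to the degree-$0$ subring $k[V_{i}]^{0} = k[U_{i}]$ is precisely $f/X_{i}^{n} \mapsto f(X_{0}, \dots, 1, \dots, X_{m})$.

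The core of the argument is to show that $p^{\flat}$ is injective with image exactly $k[\Aff_{\{i\}}^{m}]^{\inv}$. For injectivity I would use homogeneity: every element of $k[U_{i}]$ is a single fraction $g/X_{i}^{N}$ with $g$ homogeneous of weighted degree $Na_{i}$, and within a fixed weighted degree the dehomogenization $X_{i}\leftarrow 1$ is injective on monomials, since the exponent of $X_{i}$ in a monomial of degree $Na_{i}$ is determined by the remaining exponents; hence distinct monomials of $g$ map to distinct monomials, so $g(X_{0},\dots,1,\dots,X_{m}) = 0$ forces $g = 0$. For the image, each monomial of such a $g$ dehomogenizes to a monomial of weighted degree $Na_{i}$ minus $a_{i}$ times its $X_{i}$-exponent, which is a multiple of $a_{i}$; thus the image lies in $\bigoplus_{n} k[\Aff_{\{i\}}^{m}]_{na_{i}}$, and this is $k[\Aff_{\{i\}}^{m}]^{\inv}$ by Section~\ref{QAC}, since a monomial in the remaining variables is $\bmu_{a_{i}}$-invariant precisely when its weighted degree is divisible by $a_{i}$. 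Conversely any $h \in k[\Aff_{\{i\}}^{m}]_{na_{i}}$, viewed in $\sansS$ as a polynomial not involving $X_{i}$ and homogeneous of degree $na_{i}$, satisfies $p^{\flat}(h/X_{i}^{n}) = h$, which gives surjectivity onto the invariants. I would then define $\psi^{\flat}$ on homogeneous pieces by the rehomogenization $h \mapsto h/X_{i}^{n}$ (for $\deg h = na_{i}$), extend additively, check multiplicativity via $(h_{1}/X_{i}^{n_{1}})(h_{2}/X_{i}^{n_{2}}) = h_{1}h_{2}/X_{i}^{n_{1}+n_{2}}$, and verify that $\psi^{\flat}$ and $\varphi^{\flat}$ are mutually inverse, since dehomogenizing and then rehomogenizing a homogeneous element returns it unchanged.

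Finally I would deduce part~(1): applying the anti-equivalence to the ring isomorphism $\varphi^{\flat}\colon k[U_{i}] \to k[\Aff_{\{i\}}^{m}]^{\inv}$ yields the scheme isomorphism $\varphi\colon \Aff_{\{i\}}^{m}/\bmu_{a_{i}} \to U_{i}$ together with its inverse $\psi$, whose stated formula on points is read off from $\psi^{\flat}$ on generators; surjectivity of $p$ then follows since $p = \varphi \circ q$, where $q\colon \Aff_{\{i\}}^{m} \to \Aff_{\{i\}}^{m}/\bmu_{a_{i}}$ is the surjective quotient map. The displayed instance $\psi^{\flat}(X_{j}^{a_{i}}) = X_{j}^{a_{i}}/X_{i}^{a_{j}}$ is simply the case $n = a_{j}$, as $X_{j}^{a_{i}}$ has weighted degree $a_{i}a_{j}$. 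The step I expect to be the main obstacle is the image computation---checking that the image of $p^{\flat}$ is exactly the ring of $\bmu_{a_{i}}$-invariants, i.e.\ that invariance under the cyclic action matches the condition that every homogeneous component have weighted degree divisible by $a_{i}$; this is where the eigenspace description of $k[\Aff_{\{i\}}^{m}]^{\inv}$ from Section~\ref{QAC} does the real work, with everything else being the mechanical verification that dehomogenization and rehomogenization are mutually inverse ring maps.
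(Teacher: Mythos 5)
Your proof is correct, but it runs in the opposite direction from the paper's. The paper proves part~(1) first, working directly on points: it gets injectivity of $\varphi$ from the stabilizer computation (if $p(x)=p(y)$ then the scaling factor $t$ satisfies $t^{a_i}=1$, so $t\in\bmu_{a_i}$), and it constructs $\psi$ not by an algebraic formula but by introducing the auxiliary chart $W_i$ inside $\PP(a_0,\dots,1,\dots,a_m)$ together with the power map $m\colon W_i\to U_i$, and then descending the obvious map $\psi_0\colon W_i\to \Aff^m_{\{i\}}/\bmu_{a_i}$ through $m$; part~(2) is then obtained by writing down the algebra homomorphisms $m^\flat$, $\psi_0^\flat$, $\psi^\flat$ dual to this geometric picture (with the remaining details deferred to the cited references of Beltrametti--Robbiano and Hosgood). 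You instead prove part~(2) first as a statement in graded algebra---injectivity of dehomogenization because the $X_i$-exponent of a monomial of weighted degree $Na_i$ is determined by the other exponents, identification of the image with $\bigoplus_n k[\Aff^m_{\{i\}}]_{na_i}=k[\Aff^m_{\{i\}}]^{\inv}$ via the eigenspace description from Section~\ref{QAC}, and rehomogenization as inverse---and then read off part~(1) from the anti-equivalence between affine $k$-schemes and $k$-algebras. Each route has its advantages: the paper's makes the orbit/stabilizer geometry explicit and produces $\psi$ concretely on points, at the cost of the auxiliary space $W_i$ and a descent argument, and it leaves the algebraic verifications largely to references; your route avoids $W_i$ entirely, is scheme-theoretically cleaner (no point-wise arguments over $\overline{k}$), and actually supplies the injectivity and image computations that the paper's proof of part~(2) only sketches. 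The inputs you rely on---that $U_i$ is affine with coordinate ring $k[V_i]^0$ and that the quotient has coordinate ring $k[\Aff^m_{\{i\}}]^{\bmu_{a_i}}$ as in \eqref{quotalg} and Section~\ref{QAC}---are all established in the paper before the proposition, so your argument is complete as it stands.
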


Proposition \ref{pieces} leads to the two diagrams
\[
\begin{tikzcd}
\Aff_{\{i\}}^{m} \arrow[d]  \arrow[r, "\subset"] \arrow[dr, "p"] & V_{i} \arrow[d, "p"]  \arrow[r, "\subset"] & \VV \arrow[d, "p"] \\
\Aff_{\{i\}}^{m}/\bmu_{a_{i}}  \arrow[r, "\varphi"]              & U_{i} \arrow[r, "\subset"]                  & \PP(\w) 
\end{tikzcd}
\qquad\qquad
\begin{tikzcd}
k[\Aff_{\{i\}}^{m}]                            & k[V_{i}] \arrow[l] & \sansS \arrow[l, "\supset"'] \\
k[\Aff^{m}_{\{i\}}]^{\inv} \arrow[u, "\cup"]
& k[U_{i}] \arrow[u, "\cup"'] \arrow[l, "\sim", "\varphi^{\flat}"'] \arrow[ul, "p^{\flat}"']
\end{tikzcd}
\]

\begin{proof}[Proof of proposition \ref{pieces}]\quad

1. See \cite[Th.~2.6.b, p.~12]{B-M}, \cite[1.2.3]{Hosgood}, and occasionally
see also \cite[pp.~63--64]{Tevelev} and \cite[pp.~4--5]{Reid}.

2. Let $x$ and $y$ be in $\Aff_{\{i\}}^{m}$. If
\[
(y_{0}, \dots, 1, \dots, y_{m}) =
   (\zeta^{a_{0}} x_{0}, \dots, 1, \dots, \zeta^{a_{m}} x_{m}), \quad \zeta \in \bmu_{a_{i}},
\]
then $p(x) = p(y)$, and the existence of $\varphi$ follows. Conversely, assume
that $p(y) = p(x)$. Then we have in $\VV$, with some $t \in \GG_{m}$:
\[
(v_{0}, \ldots, 1,  \ldots, v_{m}) =
      (t^{a_{0}} u_{0}, \ldots, t^{a_{i}},  \ldots, t^{a_{m}} u_{m})
\]
This implies that $t \in \bmu_{a_{i}}$, hence, $p$ factors modulo $\bmu_{a_{i}}$,
and $\varphi$ is injective.

3. Let
\[
W_{i} = \set{x = (x_0 : \ldots : \xi_{i} : \ldots : x_n)  \in \PP(a_{0}, \dots, 1, \dots, a_{m})}
            {\xi_{i} \neq 0}
\]
and consider the morphisms
\[
\begin{tikzcd}
m \colon W_{i} \arrow[r] & U_{i}
\end{tikzcd}
\]
given by
\[
m(x_{0} : \ldots : \xi_{i} : \ldots : x_{m}) = (x_{0} : \ldots : \xi_{i}^{a_{i}} : \ldots : x_{m})
\]
and
\[
\begin{tikzcd}
\psi_{0} \colon W_{i} \arrow[r] & \Aff_{\{i\}}^{m}/\bmu_{a_{i}}
\end{tikzcd}
\]
given by
\[
\psi_{0}(x_{0} : \ldots : \xi_{i} : \ldots : x_{m}) =
      (\frac{x_{0}}{\xi_{i}^{a_{0}}}, \dots, 1, \dots, \frac{x_{m}}{\xi_{i}^{a_{m}}}).
\]
If $m(x) = m(y)$, then $\eta_{i} = t \xi_{i}$ with $t \in \bmu_{a_{i}}$ and 
$\psi_{0}(x) = \psi_{0}(y)$. Hence, there is a morphism
\[
\begin{tikzcd}
\psi \colon U_{i} \arrow[r] & \Aff_{\{i\}}^{m}/\bmu_{a_{i}}
\end{tikzcd}
\]
such that $\psi_{0} = \psi \circ m$:
\[
\begin{tikzcd}
W_{i} \arrow[r, "\psi_{0}"] \arrow[d, "m"] & \Aff_{\{i\}}^{m}/\bmu_{a_{i}} \\
U_{i} \arrow[ur, "\psi"]
\end{tikzcd}
\]
We have
\[
\psi(x_{0} : \ldots : 1 : \ldots : x_{m}) = (x_{0}, \dots, 1, \dots, x_{m}).
\]
This implies $\psi \circ \varphi (x) = x$ if $x \in \Aff_{\{i\}}^{m}/\bmu_{a_{i}}$,
and $\psi$ is surjective. On the other hand, it is easy to see that 
$\varphi \circ \psi \circ m (x) = m(x)$ if $x \in W_{i}$, hence, 
$\varphi \circ \psi(x) = x$ if $x \in U_{i}$, and $\varphi$ is surjective.

4. The corresponding homomorphisms of algebras are respectively
\[
\begin{tikzcd}
m^{\flat} \colon \sansS_{(i)} = k[U_{i}] \arrow[r] & k[W_{i}]
\end{tikzcd}
\]
given by
\[
m^{\flat} (f / X_{i}^{n}) = f / \Xi_{i}^{\, n a_{i}},
\]
for $f$ homogeneous, $n \geq 0$, $\wdeg f = n a_{i}$, and
\[
\begin{tikzcd}
\psi_{0}^{\flat} \colon k[\Aff^{m}_{\{i\}}]^{\inv} \arrow[r] & k[W_{i}]
\end{tikzcd}
\]
given by
\[
\psi_{0}^{\flat}(f) = f / \Xi_{i}^{\, n a_{i}},
\]
for $f \in \mathsf{R}_{\{i\}}$, $\deg f = n a_{i}$. Now the morphism
\[
\begin{tikzcd}
\psi^{\flat} \colon k[\Aff^{m}_{\{i\}}]^{\inv} \arrow[r] & k[U_{i}]
\end{tikzcd}
\]
such that
\[
\psi^{\flat}(f) = f / X_{i}^{n},
\]
for $f \in k[\Aff^{m}_{\{i\}}]^{\inv}$, $\deg f = n a_{i}$, satisfies 
$\psi_{0}^{\flat} = m^{\flat} \circ \psi^{\flat}$, and we have a diagram
\[
\begin{tikzcd}
k[W_{i}] & k[\Aff^{m}_{\{i\}}]^{\inv} \arrow[l, "\psi_{0}^{\flat}"']  \arrow[dl, "\psi^{\flat}"] \\
k[U_{i}] \arrow[u, "m^{\flat}"]
\end{tikzcd}
\]
\qed
\end{proof}

\begin{remark}
Roughly speaking, we have
\[
\psi(x_{0} : \ldots : x_{i} : \ldots : x_{m}) =
\left( \frac{x_{0}}{x_{i}^{a_{0}/a_{i}}}, \dots, 1, \dots,\frac{x_{m}}{x_{i}^{a_{m}/a_{i}}} \right).
\]
This formula obviously makes sense if $a_{i} = 1$ (see below).
\end{remark}

From Proposition \ref{pieces} we get, see also \cite[p.~81]{Kollar} and 
\cite[Prop.~1.3.3(ii)]{Dgv1982}:

\begin{corollary}
\label{projcyclic}
The space $\PP(\w)$ has cyclic quotient singularities.
\end{corollary}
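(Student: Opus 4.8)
The plan is to read the corollary off directly from the chart structure established in Proposition~\ref{pieces}, with no new geometry required beyond assembling the affine pieces. First I would note that the principal open sets $U_i = \set{x = (x_0 : \ldots : x_m) \in \PP(\w)}{x_i \neq 0}$, for $0 \leq i \leq m$, form an open cover of $\PP(\w)$: every weighted projective point has at least one nonzero homogeneous coordinate, so it lies in some $U_i$. Since the property of having (at worst) cyclic quotient singularities is local on the variety, it therefore suffices to verify the statement on each chart $U_i$ separately.

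Next I would invoke Proposition~\ref{pieces}(1), which furnishes an isomorphism $\varphi \colon \Aff_{\{i\}}^{m}/\bmu_{a_{i}} \xrightarrow{\ \sim\ } U_{i}$, where $\Aff_{\{i\}}^{m} \cong \Aff^{m}$ is smooth and $\bmu_{a_{i}}$ is the cyclic group scheme of $a_i$-th roots of unity. Crucially, the action in question is the \emph{linear} action of type $\frac{1}{a_{i}}(a_{0}, \dots, \widehat{a_{i}}, \dots, a_{m})$ introduced in~\S\ref{QAC}, namely $\zeta.(x_{0}, \dots, 1, \dots, x_{m}) = (\zeta^{a_{0}} x_{0}, \dots, 1, \dots, \zeta^{a_{m}} x_{m})$. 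Thus each point of $\PP(\w)$ admits an open neighborhood that is, on the nose, the quotient of a smooth affine variety by a finite cyclic group acting linearly. By the very definition of a cyclic quotient singularity, this is exactly what needs to be exhibited, and the corollary follows.

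The main obstacle here is modest and essentially definitional: one must confirm that these chartwise quotient presentations genuinely account for the singular points, rather than merely covering a smooth locus. This is immediate from the remarks in~\S\ref{QAC}, where the projection $\Aff_{\{i\}}^{m} \to \Aff_{\{i\}}^{m}/\bmu_{a_{i}}$ is seen to be \'etale away from the locus of nontrivial isotropy, so that any singularity of $U_i$ arises precisely from the non-free locus of the $\bmu_{a_i}$-action and is therefore a cyclic quotient singularity. This is entirely consistent with the explicit description of $\Sing \PP(\w)$ in Proposition~\ref{Dimca}, whose irreducible components are themselves supported along coordinate subspaces. Hence no computation beyond gluing the charts is needed, and the plan reduces to citing Proposition~\ref{pieces} together with the linearity of the cyclic action recorded in~\S\ref{QAC}.
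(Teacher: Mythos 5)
Your argument is correct and is essentially the paper's own proof: the paper deduces the corollary directly from Proposition~\ref{pieces}, which identifies each chart $U_{i}$ of the open cover of $\PP(\w)$ with the cyclic quotient $\Aff_{\{i\}}^{m}/\bmu_{a_{i}}$ under the linear action of type $\frac{1}{a_{i}}(a_{0}, \dots, \widehat{a_{i}}, \dots, a_{m})$, exactly as you do. Your added remarks on locality and on the \'etale locus away from points with nontrivial isotropy only make explicit what the paper (citing Koll\'ar and Dolgachev) leaves implicit.
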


Similarly, if $k = \RR$, the space $\PP(\w)$ is an orbifold (or $V$-\emph{variety})
\cite[Th.~3.1.6]{Dgv1982}.

\subsubsection{A special case}

The complement of the open set $U_{i}$ is the hyperplane $P_{i}$ of $\PP(\w)$
with equation $x_{i} = 0$. Then $P_{i}$ is the weighted projective space
$\PP(\w')$ of dimension $m - 1$, with 
$\w' = (a_{0}, \dots, \widehat{a_{i}}, \dots, a_{m})$, and we have the 
standard ``motivic'' decomposition
\begin{equation}
\label{motivic}
\PP(\w) = U_{i} \sqcup P_{i}.
\end{equation}
If we assume $\w = (a_{0}, \dots, 1, \dots, a_{m})$, with $a_{i} = 1$, the 
set $U_{i}$ is affine, since  $k[U_{i}] = k[Y_{1}, \dots, Y_{m}]$, with
\[
Y_{1} = \frac{X_{0}}{X_{i}^{a_{0}}}, \quad \dots, \quad Y_{m} 
      = \frac{X_{m}}{X_{i}^{a_{m}}}
\]
and $U_{i}$ is isomorphic to $\Aff^{m}$. The morphism 
\[
\begin{tikzcd}
\varphi \colon \Aff_{\{i\}}^{m} \arrow[r,"\sim"] & U_{i},
\end{tikzcd}
\]
is an isomorphism, with an inverse
\[
\begin{tikzcd}
\psi \colon U_{i} \arrow[r,"\sim"] & \Aff_{\{i\}}^{m}
\end{tikzcd}
\]
given by
\[
\psi(x_{0} : \ldots : x_{i} : \ldots : x_{m}) =
    (\frac{x_{0}}{x_{i}^{a_{0}}}, \dots, 1, \dots, \frac{x_{m}}{x_{i}^{a_{m}}}).
\]
Since $U_{i}$ is isomorphic to $\Aff^{m}$, the space $\PP(\w)$ is a 
compactification of the affine space $\Aff^{m}$.

\subsubsection{Action of $\GG_{m}$}
The action
\[
\begin{tikzcd}
\sigma \colon  \GG_{m} \times \Aff_{\{i\}}^{m} \arrow[r] & V_{i}
\end{tikzcd}
\]
is given by 
\[
\sigma(t).(x_{0}, \dots, 1, \dots x_{m}) = (t^{a_0}x_{0}, \dots,  t^{a_i}, \dots,  t^{a_m} x_{m}).
\]
Let $x = (x_{0}, \dots, x_{m})$ and similarly for $x'$. 
If $\sigma(t').x' = \sigma(t).x$, then $(t')^{a_i} = t^{a_i}$ and
$t' = \zeta^{-1} t$ with $\zeta \in \bmu_{a_{i}}$. We thus have
$(x_{0}', \dots, x_{m}') = (\zeta^{a_0} x_{0}, \dots, \zeta^{a_m} x_{m})$ and
\[
\sigma(t').x' = \sigma(t).x \quad \Longleftrightarrow \quad t'
              = \zeta^{-1} t \ \text{and} \ x' = \zeta.x, \quad \zeta \in \bmu_{a_{i}}. 
\]
Hence, the action $\sigma$ factors through
\[
(\GG_{m} \times \Aff_{\{i\}}^{m}) / \bmu_{a_{i}}
\]
with the action $\zeta.(t,x) = (\zeta^{-1} t, \zeta.x)$. The canonical homomorphism
\[
\begin{tikzcd}
\sigma^{\flat}  \colon  k[V_{i}] \arrow[r] & k[\Aff_{\{i\}}^{m}][T,T^{-1}]
\end{tikzcd}
\]
is equal, for $f$ $\w$-homogeneous, to
\[
\sigma^{\flat} \left( \frac{f}{X_{i}^{n}} \right) =
     f(X_{0}, \dots, 1, \dots, X_{m}) \cdot T^{\deg f - n a_{i}}
\]
which is injective, with image equal to $k[\Aff_{\{i\}}^{m}][T,T^{-1}]^{\bmu_{a_{i}}}$. Then:

\begin{proposition}
The action $\sigma$ induces isomorphisms
\[
\begin{tikzcd}
(\GG_{m} \times \Aff_{\{i\}}^{m})/\bmu_{a_{i}}  \arrow[r,"\thicksim"] & V_{i},
\end{tikzcd}
\quad
\begin{tikzcd}
k[V_{i}] \arrow[r, "\thicksim"] & k[\Aff_{\{i\}}^{m}][T,T^{-1}]^{\bmu_{a_{i}}},
\end{tikzcd}
\]
and $\sigma$ is an \'etale morphism.
\end{proposition}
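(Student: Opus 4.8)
The plan is to assemble the facts already established rather than to compute anything new. First I would note that both the source and the target of $\sigma$ are affine: the space $V_{i}$ is the principal open subset of $\VV$ on which $X_{i}$ is invertible, so $V_{i} = \mathrm{Spec}\,k[V_{i}]$ with $k[V_{i}]$ the localization of $\sansS$ at $X_{i}$, while $\GG_{m} \times \Aff_{\{i\}}^{m} = \mathrm{Spec}\,k[\Aff_{\{i\}}^{m}][T, T^{-1}]$. The discussion preceding the proposition already shows that $\sigma$ is constant on the orbits of the $\bmu_{a_{i}}$-action $\zeta.(t, x) = (\zeta^{-1} t, \zeta.x)$, so it factors through a morphism $\overline{\sigma}$ defined on the quotient; this is the morphism I want to show is an isomorphism.

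Next I would invoke the standard fact that the quotient of an affine variety by a finite group is again affine, with coordinate ring equal to the ring of invariants. This identifies $(\GG_{m} \times \Aff_{\{i\}}^{m})/\bmu_{a_{i}}$ as a good geometric quotient whose coordinate ring is $k[\Aff_{\{i\}}^{m}][T, T^{-1}]^{\bmu_{a_{i}}}$. The comorphism of $\overline{\sigma}$ is precisely the map $\sigma^{\flat}$, which has already been checked to be injective with image exactly $k[\Aff_{\{i\}}^{m}][T, T^{-1}]^{\bmu_{a_{i}}}$. I would then conclude that $\sigma^{\flat}$ is an isomorphism onto the invariant ring, and, since both schemes are affine, that $\overline{\sigma}$ is an isomorphism. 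This delivers both displayed isomorphisms at once.

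For étaleness I would first check that the $\bmu_{a_{i}}$-action is free: the relation $\zeta^{-1} t = t$ in the $\GG_{m}$-factor already forces $\zeta = 1$, independently of the $\Aff_{\{i\}}^{m}$-coordinate. Since the appendix assumes that $p \nmid a_{i}$, the group scheme $\bmu_{a_{i}}$ is finite étale, and a free action of a finite étale group scheme yields a finite étale quotient morphism $q \colon \GG_{m} \times \Aff_{\{i\}}^{m} \to (\GG_{m} \times \Aff_{\{i\}}^{m})/\bmu_{a_{i}}$. Because $\sigma = \overline{\sigma} \circ q$ with $\overline{\sigma}$ an isomorphism, $\sigma$ is étale.

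I do not expect a serious obstacle, since the preceding discussion has already done the substantive work; the only step that needs genuine care is the étaleness, where one must combine the freeness of the action with the hypothesis $p \nmid a_{i}$. A reader wishing to bypass group-scheme quotients can instead verify étaleness by a direct Jacobian computation: $\sigma$ is a morphism between smooth varieties of the same dimension $m + 1$, and its differential is invertible at every point precisely because each $a_{i}$ is a unit in $k$.
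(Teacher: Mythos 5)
Your proposal is correct, and it is genuinely more informative than the paper's own proof, which consists of a single citation (to Beltrametti--Robbiano, Th.~2.6.c); you supply the argument that the paper outsources. Your route is the natural one: since both $\GG_{m} \times \Aff_{\{i\}}^{m}$ and $V_{i}$ are affine, the scheme-level isomorphism is equivalent to the ring-level one, and the latter follows once one knows that $\sigma^{\flat}$ is injective with image the invariant ring; \'etaleness then follows either from your torsor argument (the action $\zeta.(t,x) = (\zeta^{-1}t, \zeta.x)$ is visibly free on the $\GG_{m}$-factor, and $\bmu_{a_{i}}$ is finite \'etale because $p \nmid a_{i}$) or from your Jacobian computation, whose determinant is $\pm a_{i}\, t^{a_{0} + \dots + a_{m} - 1}$, a unit on $\GG_{m} \times \Aff_{\{i\}}^{m}$. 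Two caveats are worth recording. First, the key algebraic input---that $\sigma^{\flat}$ is injective with image exactly $k[\Aff_{\{i\}}^{m}][T,T^{-1}]^{\bmu_{a_{i}}}$---is asserted but not proved in the text preceding the proposition, and it is essentially the content of the second displayed isomorphism; a fully self-contained proof should verify it, which is a short monomial computation (dehomogenization $f \mapsto f(X_{0},\dots,1,\dots,X_{m})$ is injective on each graded piece of $k[V_{i}]$ since the exponent of $X_{i}$ is recoverable from the degree, and a monomial $\prod_{j \neq i} X_{j}^{r_{j}}\, T^{e}$ is $\bmu_{a_{i}}$-invariant precisely when $e \equiv \sum_{j \neq i} a_{j} r_{j} \pmod{a_{i}}$, which characterizes the image). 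Second, strictly speaking the comorphism of $\overline{\sigma}$ is the corestriction of $\sigma^{\flat}$ to the invariant subring rather than $\sigma^{\flat}$ itself, a harmless adjustment. With these minor points made explicit, your argument is complete, and it has the merit of yielding the result from the paper's own preceding discussion plus standard facts about quotients by finite group schemes, rather than by appeal to the literature.
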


\begin{proof}
See \cite[Th.~2.6.c, p.~12]{B-M}.\qed
\end{proof}

\noindent
{\bf Warning. } 
These isomorphisms are not surjective on the sets of rational points: think of 
the covering $\Aff^{1} \to \Aff^{1}$ given by $z \mapsto z^{2}$ !

\subsection{Rationality}
Let $k$ be a field. A point $y \in \PP(\w)$ is rational if and only if
$p^{- 1}(y)$ is invariant under the Galois group $\Gamma = \Gal(\bar{k}/k)$. 
We denote as usual the subset of rational points of $\PP(\w)$ by $\PP(\w)(k)$.
The orbit of $x = (x_{0}, \dots, x_{m}) \in \VV(\overline{k})$ with image
$p(x) = y$ is the rational curve
\[
C(x) =  p^{- 1}(y) = \sigma(\overline{k}^{\times}).x 
     =  \set{(\lambda^{a_0}x_{0}, \dots, \lambda^{a_m} x_{m})}
            {\lambda \in \overline{k}^{\times}}
        \subset \VV(\overline{k}).
\]

\begin{lemma}
\label{rationality}
Let $k$ be any field.
\begin{enumerate}
\item \label{RAT1}
      Let $x \in \VV$. Then
      \[
      p(x) \in \PP(\w)(k) \quad \Longleftrightarrow \quad C(x) \cap \VV(k) \neq \emptyset.
      \]
      In other words, the map $\xymatrix{p \colon \VV(k) \ar[r] & \PP(\w)(k)}$
      is surjective.
\item \label{RAT2}
      The map $p$ induces a bijection 
      $\xymatrix{\VV(k)/R \ar[r]^{\sim} & \PP(\w)(k)}$  where $R$ is the 
      equivalence relation whose classes are the subsets $C(x) \cap \VV(k)$.
\end{enumerate}
\end{lemma}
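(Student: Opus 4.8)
The plan is to prove part (1) by Galois descent, using Hilbert's Theorem 90 in its cohomological form $H^{1}(\Gamma,\overline{k}^{\times})=1$, and then to deduce part (2) formally. Throughout I abbreviate the weighted action by $\lambda\cdot x=(\lambda^{a_{0}}x_{0},\dots,\lambda^{a_{m}}x_{m})$, so that $C(x)=\set{\lambda\cdot x}{\lambda\in\overline{k}^{\times}}$. The reverse implication of \eqref{RAT1} is immediate: if $x_{0}\in C(x)\cap\VV(k)$, then for every $\gamma\in\Gamma$ and $\lambda\in\overline{k}^{\times}$ we have $\gamma(\lambda\cdot x_{0})=\gamma(\lambda)\cdot\gamma(x_{0})=\gamma(\lambda)\cdot x_{0}\in C(x_{0})$, so the fibre $C(x_{0})=C(x)$ is $\Gamma$-stable and hence $p(x)=p(x_{0})$ is rational by the criterion recalled just before the lemma. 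The real content — equivalently, the asserted surjectivity of $p\colon\VV(k)\to\PP(\w)(k)$ — is the forward implication: assuming $C(x)$ is $\Gamma$-stable, I must manufacture a $k$-rational representative.

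So fix a representative $x=(x_{0},\dots,x_{m})$ of a rational point, let $S=\set{i}{x_{i}\neq 0}$ be its support, and set $n=\gcd\set{a_{i}}{i\in S}$. For each $\gamma\in\Gamma$ the point $\gamma(x)$ lies in $C(x)$, so there is $\lambda_{\gamma}\in\overline{k}^{\times}$ with $\gamma(x)=\lambda_{\gamma}\cdot x$; concretely $\gamma(x_{i})=\lambda_{\gamma}^{a_{i}}x_{i}$ for $i\in S$. The main difficulty I expect is exactly here: because the stabiliser of $x$ in $\GG_{m}$ is $\bmu_{n}$, which can be nontrivial when coordinates vanish, $\lambda_{\gamma}$ is only determined modulo $\bmu_{n}$ and $\gamma\mapsto\lambda_{\gamma}$ need not be a genuine cocycle. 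I would circumvent this by passing to $n$-th powers: choosing a B\'ezout relation $\sum_{i\in S}d_{i}a_{i}=n$, the quantity $\mu_{\gamma}:=\prod_{i\in S}(\gamma(x_{i})/x_{i})^{d_{i}}=\lambda_{\gamma}^{n}$ is independent of the choice of $\lambda_{\gamma}$ and lies honestly in $\overline{k}^{\times}$. Since each $\gamma\mapsto\gamma(x_{i})/x_{i}$ is a $1$-cocycle, so is their product, i.e.\ $\mu_{\gamma\delta}=\mu_{\gamma}\,\gamma(\mu_{\delta})$.

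By Hilbert's Theorem 90 there is $b\in\overline{k}^{\times}$ with $\mu_{\gamma}=\gamma(b)/b$ for all $\gamma$. As $p\nmid a_{i}$ for all $i$ (a standing hypothesis in this appendix), we have $p\nmid n$, so $b^{-1}$ admits a separable $n$-th root $\beta\in\overline{k}^{\times}$, and I claim $x':=\beta\cdot x\in C(x)$ is $\Gamma$-fixed. Indeed $x'_{i}=0=\gamma(x'_{i})$ for $i\notin S$; while for $i\in S$ we have $n\mid a_{i}$ and $(\gamma(\beta)/\beta)^{n}=\mu_{\gamma}^{-1}$, whence $\gamma(\beta)^{a_{i}}=\beta^{a_{i}}\mu_{\gamma}^{-a_{i}/n}$, so that $\gamma(x'_{i})=\gamma(\beta)^{a_{i}}\gamma(x_{i})=\beta^{a_{i}}\mu_{\gamma}^{-a_{i}/n}\lambda_{\gamma}^{a_{i}}x_{i}=\beta^{a_{i}}x_{i}=x'_{i}$, using $\mu_{\gamma}^{a_{i}/n}=\lambda_{\gamma}^{a_{i}}$. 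Hence $x'\in\VV(k)$, which proves \eqref{RAT1}. Part \eqref{RAT2} then follows formally: the induced map $\VV(k)/R\to\PP(\w)(k)$ is well defined because $R$-equivalent points share a fibre, it is surjective by \eqref{RAT1}, and it is injective since $p(x)=p(x')$ with $x,x'\in\VV(k)$ forces $x'\in C(x)\cap\VV(k)$, i.e.\ $x\mathbin{R}x'$.
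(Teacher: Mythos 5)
Your proof is correct, and it is a genuine addition relative to the paper: the paper's own ``proof'' of this lemma consists of the remark that it suffices to prove assertion (1), followed by citations to Perret (Lem.~6) and Goto (Lemma~1.2), so your write-up supplies the self-contained Galois-descent argument that the paper outsources to the literature. You also isolate the one real difficulty correctly: since the stabiliser of $x$ in $\GG_{m}$ is $\bmu_{n}$ with $n=\gcd\set{a_{i}}{i\in S}$, the scalar $\lambda_{\gamma}$ is determined only modulo $\bmu_{n}$, so $\gamma\mapsto\lambda_{\gamma}$ need not be a cocycle; your passage to $\mu_{\gamma}=\lambda_{\gamma}^{n}$ via a B\'ezout relation repairs this, and the extraction of an $n$-th root of $b^{-1}$ at the end is exactly where the appendix's standing hypothesis $\gcd(p,a_{i})=1$ (hence $p\nmid n$) enters --- a dependence that the citation-style proof keeps invisible. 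Two remarks. First, your appeal to Hilbert's Theorem 90 is superfluous: by construction $\mu_{\gamma}=\prod_{i\in S}\left(\gamma(x_{i})/x_{i}\right)^{d_{i}}=\gamma(c)/c$ with $c=\prod_{i\in S}x_{i}^{d_{i}}$, so your cocycle is visibly a coboundary and you may take $b=c$ outright; besides shortening the argument, this disposes of the continuity requirement (which you did not address, though it is easily checked) for applying Hilbert 90 to the profinite group $\Gamma=\Gal(\overline{k}/k)$. Second, your last step ``fixed by $\Gamma$ implies coordinates in $k$'' uses $\overline{k}^{\Gamma}=k$, which holds only when $k$ is perfect (or when $\overline{k}$ is read as a separable closure); this imprecision is inherited from the rationality criterion the paper states just before the lemma, and it is harmless in the intended application $k=\Fq$, but it deserves a mention given that the lemma is asserted for ``any field''.
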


\begin{proof}
It is sufficient to prove the first assertion. See \cite[Lem.~6]{Perret} and 
\cite[Lemma~1.2]{Goto}.\qed
\end{proof}

\begin{lemma}
\label{NbPts}
Assume $k = \Fq$. Recall that $p$ is prime to all $a_{i}$.
\begin{enumerate}
\item \label{RAT3}
      If $x \in \VV(k)$, then $\card{C(x) \cap \VV(\Fq)} = \sigma(k^{\times}).x$ and
      \[
      \card{C(x) \cap \VV(\Fq)} = q - 1.
      \]
\item \label{RAT4}
      The map $p$ induces a bijection
      \[
      \xymatrix{\VV(k)/\sigma(k^{\times}) \ar[r]^{\sim} & \PP(\w)(k)}
      \]
\item \label{RAT5}
      We have
      \[
      \card{\PP(\w)(\Fq)} = \pi_{m}, \quad \text{with} \quad
      \pi_{m} = \card{\PP^{m}(\Fq)} = \frac{q^{m + 1} - 1}{q - 1}.
      \]
\end{enumerate}
\end{lemma}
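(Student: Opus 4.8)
The plan is to reduce the whole lemma to one orbit count: once I know that each fibre of $p$ over an $\Fq$-rational point has exactly $q-1$ points in $\VV(\Fq)$, assertions (\ref{RAT4}) and (\ref{RAT5}) follow almost formally. So the substance lies in the count of assertion (\ref{RAT3}).

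For (\ref{RAT3}) I would fix $x=(x_0,\dots,x_m)\in\VV(\Fq)$, put $S=\{i:x_i\neq 0\}$ (nonempty since $x\neq 0$) and $g=\gcd\{a_i:i\in S\}$; as $p$ is prime to every $a_i$ it is prime to $g$. First I identify the stabilizer of $x$: the relation $\sigma(\lambda).x=x$ forces $\lambda^{a_i}=1$ for all $i\in S$, so the stabilizer is $\bmu_g$ and the orbit map $\lambda\mapsto\sigma(\lambda).x$ has fibres the cosets of $\bmu_g$ in $\overline{\Fq}^\times$. Next I cut out the rational points by Frobenius: since each $x_i\in\Fq$, one computes $\mathrm{Frob}(\sigma(\lambda).x)=\sigma(\lambda^q).x$, so $\sigma(\lambda).x\in\VV(\Fq)$ precisely when $\lambda^{q-1}\in\bmu_g$, i.e.\ when $\lambda\in\bmu_{g(q-1)}$. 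Hence the $\Fq$-points of $C(x)$ are in bijection with $\bmu_{g(q-1)}/\bmu_g$. Because $p$ is prime to both $g$ and $q-1$, these are étale of orders $g(q-1)$ and $g$, so the quotient has exactly $q-1$ elements, giving $\card{C(x)\cap\VV(\Fq)}=q-1$. (Conceptually: $C(x)$ is the split torus $\GG_m$ over $\Fq$, the $\Fq$-isogeny $\GG_m\xrightarrow{\lambda\mapsto\lambda^g}\GG_m$ exhibiting $C(x)\cong\GG_m/\bmu_g$, whence $q-1$ rational points.)

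Assertion (\ref{RAT4}) is then immediate from Lemma~\ref{rationality}: by part (\ref{RAT1}) the map $p\colon\VV(\Fq)\to\PP(\w)(\Fq)$ is surjective, and by part (\ref{RAT2}) its fibres are exactly the classes $C(x)\cap\VV(\Fq)$, so passing to the quotient of $\VV(\Fq)$ by this orbit relation yields the stated bijection. For (\ref{RAT5}) I would simply divide: $p$ is surjective with every fibre of size $q-1$, and $\card{\VV(\Fq)}=q^{m+1}-1$, so $\card{\PP(\w)(\Fq)}=(q^{m+1}-1)/(q-1)=q^m+\dots+q+1=\pi_m$.

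The main obstacle is the uniformity in (\ref{RAT3}): the count must come out to $q-1$ for every $x$, irrespective of how many coordinates vanish and hence of the value of $g$. The mechanism is that $q-1$ is automatically prime to $p$, so the quotient $\bmu_{g(q-1)}/\bmu_g$ contributes its full complement of rational points, while the standing hypothesis $\gcd(p,a_i)=1$ keeps $\bmu_g$ and $\bmu_{g(q-1)}$ reduced of the expected orders; together these force the index to equal $q-1$ independently of $x$. It is precisely this $x$-independence that makes the point count of $\PP(\w)$ collapse to the value $\pi_m$ familiar from ordinary projective space.
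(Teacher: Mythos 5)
Your computation for the count in \eqref{RAT3} is correct and, unlike the paper, self-contained: the paper's proof of \eqref{RAT3} is simply a citation of Goto and Perret, after which \eqref{RAT4} and \eqref{RAT5} are deduced formally. Your stabilizer--Frobenius argument (stabilizer $\bmu_g$ with $g=\gcd\set{a_i}{x_i\neq 0}$; rationality of $\sigma(\lambda).x$ equivalent to $\lambda^{q-1}\in\bmu_g$; hence the rational points of $C(x)$ are parametrized by $\bmu_{g(q-1)}/\bmu_g$, of order $q-1$ since $p\nmid g(q-1)$) is a clean substitute for those references, and your derivation of \eqref{RAT5} -- surjectivity of $p$ on rational points (Lemma~\ref{rationality}\eqref{RAT1}) plus constant fibre size $q-1$, giving $\card{\PP(\w)(\Fq)}=(q^{m+1}-1)/(q-1)=\pi_m$ -- is correct and, notably, does not pass through \eqref{RAT4} at all. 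That turns out to be fortunate.

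The gap is in your treatment of \eqref{RAT4}, and it cannot be repaired, because \eqref{RAT4} as stated is false -- a fact which your own fibre description exposes. You show $C(x)\cap\VV(\Fq)=\sigma\bigl(\bmu_{g(q-1)}(\overline{\FF}_q)\bigr).x$, which has $q-1$ elements; but $\sigma(\Fq^\times).x$ has only $(q-1)/\gcd(g,q-1)$ elements, so the fibre is a single $\sigma(\Fq^\times)$-orbit only when $\gcd(g,q-1)=1$. Concretely, take $\w=(1,2)$, $q=5$, $x=(0,1)$: the fibre of $p$ over $(0:1)$ consists of the four points $(0,\mu)$, $\mu\in\FF_5^\times$, whereas $\sigma(\FF_5^\times).x=\{(0,1),(0,4)\}$; thus $\VV(\FF_5)/\sigma(\FF_5^\times)$ has $7$ elements while $\PP(1,2)(\FF_5)$ has $6$, so the map in \eqref{RAT4} is surjective but not injective. (The same happens at $(0:1:0)$ in the well-formed space $\PP(1,2,3)$ over $\FF_5$.) When you say that the fibres of $p$ are exactly the classes $C(x)\cap\VV(\Fq)$ and that passing to the quotient by ``this orbit relation'' gives the stated bijection, you are silently identifying the fibre relation $R$ of Lemma~\ref{rationality}\eqref{RAT2} with the $\sigma(k^\times)$-orbit relation; these differ precisely when some $\gcd(g,q-1)>1$. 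The same defect afflicts the first equality of \eqref{RAT3} (read as the intended set equality $C(x)\cap\VV(\Fq)=\sigma(k^\times).x$), which is exactly what the paper's own deduction of \eqref{RAT4} from Lemma~\ref{rationality}\eqref{RAT2} relies on. The correct version of \eqref{RAT4} is the bijection $\VV(k)/R\to\PP(\w)(k)$, i.e.\ Lemma~\ref{rationality}\eqref{RAT2}; everything actually used downstream in the paper -- the count $q-1$ and \eqref{RAT5} -- is true and is correctly proved by your argument.
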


\begin{proof}
\eqref{RAT3}: See Goto \cite[Prop.~1.3]{Goto} and  Perret \cite[Lem.~7]{Perret}.
Then \eqref{RAT4} follows from \eqref{RAT3} and Lemma~\ref{rationality}\eqref{RAT2}, 
whereas \eqref{RAT5} follows from~\eqref{RAT4}.\qed
\end{proof}

\begin{corollary}
Let $X$ be a hypersurface in a weighted projective space of dimension $m$ 
over~$\Fq$. Write $\card{X(\Fq)}$ for the number of $\Fq$-rational points 
on $X$ and $\card{(\Cone X)(\Fq)}$ for the number of affine solutions for the 
defining equation of $X$ in $\Aff^{m + 1}$. Then
\[
\card{(\Cone X)(\Fq)} = (q - 1) \card{X(\Fq)} + 1.
\]
\end{corollary}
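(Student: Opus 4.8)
The plan is to decompose the affine solution set according to the $\GG_m$-action used to build $\PP(\w)$ and then read off the count from Lemma~\ref{NbPts}. Let $F$ be a weighted homogeneous polynomial of degree $d \geq 1$ defining $X$, so that $\Cone X = \set{x \in \Aff^{m+1}}{F(x) = 0}$. Since $F$ is homogeneous of strictly positive degree, every monomial of $F$ has positive weighted degree and hence $F$ vanishes at the origin; thus $(0, \ldots, 0) \in (\Cone X)(\Fq)$, and this lone point is exactly what will account for the summand $+1$.

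Next I would isolate the nonzero rational solutions, namely $(\Cone X)(\Fq) \cap \VV(\Fq)$. The key structural observation is that the punctured cone $\Cone X \cap \VV$ is stable under the action $\sigma$ of $\GG_m$: if $F(x) = 0$, then $F(\sigma(t).x) = t^{d} F(x) = 0$ for every $t \in \overline{\FF}_q^{\times}$, so $\Cone X \cap \VV$ is a union of full orbits $C(x)$. Because $F$ is weighted homogeneous, its vanishing descends to a well-defined condition on $\PP(\w)$, and the projection $p$ carries each orbit in $\Cone X \cap \VV$ onto a single point of $X$; conversely the fibre over a point of $X$ is such an orbit.

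The counting then proceeds by partitioning the nonzero rational solutions into $\GG_m(\overline{\FF}_q)$-orbits. By Lemma~\ref{NbPts}\eqref{RAT3}, every orbit $C(x)$ with $x \in \VV(\Fq)$ meets $\VV(\Fq)$ in exactly $q-1$ points. By Lemma~\ref{NbPts}\eqref{RAT4}, the map $p$ induces a bijection between orbits possessing a rational representative and the rational points $\PP(\w)(\Fq)$; restricting this bijection to the $\sigma$-invariant, $p$-saturated subset $\Cone X \cap \VV$ identifies the orbits therein with the rational points $X(\Fq)$ of the hypersurface. Consequently there are exactly $\card{X(\Fq)}$ such orbits, each contributing $q-1$ rational points, whence $\card{(\Cone X)(\Fq) \cap \VV(\Fq)} = (q-1)\card{X(\Fq)}$. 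Adding back the origin yields $\card{(\Cone X)(\Fq)} = (q-1)\card{X(\Fq)} + 1$.

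I do not anticipate a serious obstacle, as the corollary is essentially a reformulation of Lemma~\ref{NbPts}. The only point demanding care is confirming that the orbit/point correspondence of Lemma~\ref{NbPts}\eqref{RAT4} restricts cleanly to $\Cone X \cap \VV$, which is exactly what the weighted homogeneity of $F$ guarantees by making this subset a union of entire $\GG_m$-orbits.
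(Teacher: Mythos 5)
Your argument is correct. It is worth noting, however, that the paper does not supply its own argument for this corollary: its ``proof'' is a one-line citation to Goto [Cor.~1.4]. What you have done is reconstruct, inside the paper, the natural argument behind that citation, using only Lemma~\ref{NbPts}: the origin accounts for the $+1$ (every monomial of $F$ has positive weighted degree, so $F(0)=0$); weighted homogeneity makes $\Cone X \cap \VV$ a union of full $\GG_{m}$-orbits, and vanishing of $F$ is orbit-invariant, so the orbits in the punctured cone with a rational representative correspond under $p$ exactly to $X(\Fq)$ by Lemma~\ref{NbPts}\eqref{RAT4}; and each such orbit contributes exactly $q-1$ rational points by Lemma~\ref{NbPts}\eqref{RAT3}. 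The one subtlety --- which you handle correctly --- is that Lemma~\ref{NbPts}\eqref{RAT4} is a statement about orbits of $\VV(\Fq)$ under the rational torus $\sigma(\Fq^{\times})$, whereas you partition by geometric orbits $C(x)$; these agree on rational points precisely because Lemma~\ref{NbPts}\eqref{RAT3} identifies $C(x)\cap\VV(\Fq)$ with $\sigma(\Fq^{\times}).x$. So your route buys a self-contained proof from results already stated in the appendix, while the paper's route buys brevity by outsourcing to Goto, whose argument is in the same spirit as yours.
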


\begin{proof}
See \cite[Cor.~1.4]{Goto}.\qed
\end{proof}

If $X$ is a hypersurface of degree $d$ over $\Fq$ in $\PP^{m}$ with $m \geq 1$,
then Serre's inequality is
\[
\card{X(\Fq)} \leq d q^{m - 1} + \pi_{m - 2}
\]
(recall that $\pi_{m - 2} = 0$), and hence,
\[
\card{(\Cone X)(\Fq)} \leq d q^{m} - (d - 1) q^{m - 1}.
\]

The following result is a bit amazing:

\begin{corollary}
Let $\Aff_{\{i\}}^{m}$ the affine hypersurface of $\VV$ with equation 
$X_{i} = 1$, and
\[
\begin{tikzcd}
p \colon \Aff_{\{i\}}^{m} \arrow[r] & \Aff_{\{i\}}^{m}/\bmu_{a_{i}}
\end{tikzcd}
\]
the quotient map under the action of type 
\[
\frac{1}{a_{i}}(a_{0}, \dots, \widehat{a_{i}}, \dots, a_{m}).
\]
Let $Z_{i}$ be the scheme $\Aff_{\{i\}}^{m}/\bmu_{a_{i}}$. Then
\[
\card{Z_{i}(\Fq)} = q^{m}.
\]
\end{corollary}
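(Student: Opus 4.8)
The plan is to reduce the count on the quotient $Z_i$ to a count on a weighted projective space, for which Lemma~\ref{NbPts} already supplies the answer. First I would invoke Proposition~\ref{pieces}(1), which gives an isomorphism $\varphi\colon \Aff_{\{i\}}^{m}/\bmu_{a_{i}} \xrightarrow{\sim} U_{i}$ induced by the polynomial maps displayed there, and in particular defined over $\Fq$. Since $Z_{i} = \Aff_{\{i\}}^{m}/\bmu_{a_{i}}$, this identification yields $\card{Z_{i}(\Fq)} = \card{U_{i}(\Fq)}$, so the statement to be proved is really one about the open stratum $U_{i} = \set{x \in \PP(\w)}{x_{i} \neq 0}$.

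Next I would appeal to the motivic decomposition~\eqref{motivic}, namely $\PP(\w) = U_{i} \sqcup P_{i}$, where $P_{i}$ is the hyperplane $\{x_{i} = 0\}$. Both strata are cut out by equations over $\Fq$, so taking $\Fq$-rational points is compatible with the disjoint union and gives $\card{U_{i}(\Fq)} = \card{\PP(\w)(\Fq)} - \card{P_{i}(\Fq)}$. Here $P_{i}$ is itself a weighted projective space $\PP(\w')$ of dimension $m - 1$, with weight sequence $\w' = (a_{0}, \dots, \widehat{a_{i}}, \dots, a_{m})$. The hypothesis that $\mathrm{char}\,k$ is coprime to every $a_{j}$ is inherited by this sub-sequence, so Lemma~\ref{NbPts} applies verbatim to $\PP(\w')$.

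Finally, Lemma~\ref{NbPts}\eqref{RAT5} yields $\card{\PP(\w)(\Fq)} = \pi_{m}$ and $\card{P_{i}(\Fq)} = \card{\PP(\w')(\Fq)} = \pi_{m-1}$, whence
\[
\card{Z_{i}(\Fq)} = \card{U_{i}(\Fq)} = \pi_{m} - \pi_{m-1} = \frac{q^{m+1}-1}{q-1} - \frac{q^{m}-1}{q-1} = q^{m}.
\]

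The one subtlety, and the reason the result is called amazing, is the Warning preceding this subsection: the quotient map $p\colon \Aff_{\{i\}}^{m} \to Z_{i}$ need \emph{not} be surjective on $\Fq$-rational points, so one cannot obtain $\card{Z_{i}(\Fq)}$ by dividing $\card{\Aff_{\{i\}}^{m}(\Fq)} = q^{m}$ by the orbit sizes of the $\bmu_{a_{i}}$-action (some Galois-stable $\bmu_{a_{i}}$-orbits carry no rational representative of $\Aff_{\{i\}}^{m}$). The plan above sidesteps fiber counting entirely, reading $\card{Z_{i}(\Fq)}$ off the two weighted projective spaces $\PP(\w) \supset U_{i} \cong Z_{i}$ and $P_{i} = \PP(\w')$; the telescoping identity $\pi_{m} - \pi_{m-1} = q^{m}$ is then what makes the coincidence precise. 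I do not expect any genuine obstacle: the only things worth checking are that the isomorphism of Proposition~\ref{pieces}(1) and the decomposition~\eqref{motivic} are defined over $\Fq$, and that the coprimality-to-$p$ hypothesis of Lemma~\ref{NbPts} descends to the sub-tuple $\w'$, all of which are immediate.
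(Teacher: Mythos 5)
Your proposal is correct and is essentially the paper's own proof: the paper derives the corollary from the decomposition $\PP(\w) = U_{i} \sqcup P_{i}$ in \eqref{motivic} together with Lemma~\ref{NbPts}\eqref{RAT5}, exactly the route you take via Proposition~\ref{pieces} and the telescoping count $\pi_{m} - \pi_{m-1} = q^{m}$. Your write-up merely makes explicit the rationality checks and the remark (echoing the paper's Warning) that one cannot count fibers of $p$ directly, which the paper leaves implicit.
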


\begin{proof}
This is a consequence of \eqref{motivic} and of Lemma~\ref{NbPts}\eqref{RAT5}.\qed
\end{proof}

To be less amazed, observe that if $q$ is odd and $Z = \Aff^{1}/\bmu_{2}$,
then \hbox{$\card{Z(\Fq)} = q$.}

\subsection{Weighted forms}

\subsubsection{Definition}

Since the natural homomorphism $\pi^{*}$ defines an isomorphism
\[
\begin{CD}
\reg_{\PP(\w)}(U) & @>{\sim}>> & \pi_{*}(\reg_{\PP^{m}})^{G}(U) = \reg_{\PP^{m}}(\pi^{-1}(U))^{G},
\end{CD}
\]
for any open set $U \subset \PP(\w)$, we have a homomorphism of graded rings
\[
\begin{CD}
\pi^{\flat} \colon k[X_{0}, \dots, X_{m}] & @>>> & k[X_{0}^{a_0}, \dots, X_{m}^{a_m}]
\end{CD}
\]
such that $\pi^{\flat}(X_{i}) = X_{i}^{a_i}$. This leads to the isomorphism
\[
\begin{CD}
S(\w) & @>{\sim}>> & k[X_{0}^{a_0}, \dots, X_{m}^{a_m}] = k[X_{0}, \dots, X_{m}]^{G},
\end{CD}
\]
see \cite[p.~37]{Dgv1982} and \cite[Lemma~4.2.1]{Hosgood}. 

Henceforth we write $X = (X_{0}, \dots, X_{m})$ and denote by $k[X]$ the 
algebra of polynomials in $(X_{0}, \dots, X_{m})$. A polynomial $f \in k[X]$ 
is \emph{quasi-homogeneous} (or \emph{weighted homogeneous}, or a 
\emph{weighted form}) of $\w$-degree $d$ (or of degree $d$ w.r.t.~$\w$) if
\[
f(\lambda^{a_0}X_{0}, \dots, \lambda^{a_m} X_{m}) = \lambda^{d}(X_{0}, \dots, X_{m}).
\] 
We denote by $k[X]_{d}$ the vector space of homogeneous polynomials of 
degree~$d$, and by $k^{\w}[X]_{d}$ the vector space of quasi-homogeneous 
polynomials of $\w$-degree $d$. Now
\[
f \in k^{\w}[X]_{d} \quad \Longrightarrow \quad \pi^{*}f \in k[X]_{d}. 
\]
For a monomial $m = X_{0}^{r_0} \dots X_{m}^{r_m}$, we have
\[
m(\lambda^{a_0}X_{0}, \dots, \lambda^{a_m} X_{m}) =
     \lambda^{a_0 r_0}X_{0}^{r_0} \dots \lambda^{a_m r_m} X_{m}^{r_m}
\]
hence, $m \in k^{\w}[X]_{d}$ with
\[
a_{0} r_{0} + \dots + a_{m} r_{m} = d,
\]
and the dimension of $k^{\w}[X_{0}, \dots, X_{m}]_{d}$ is equal to
\[
\set{(r_{0}, \dots, r_{m}) \in \NN^{m}}{a_0 r_0 + \dots + a_m r_m = d}.
\]
This number can be calculated with the help of Ehrhart polynomials (see~\cite{Be}).

Every $f \in k^{\w}[X]_{d}$ defines a hypersurface
\[
Y = Y_{f} = \set{(x_{0} : \ldots : x_{m}) \in \PP(\w)}{f(x_{0}, \dots, x_{m}) = 0},
\]
and we associate also to $f$ the projective hypersurface of degree $d$:
\[
X = X_{f} = \set{(x_{0} : \ldots : x_{m}) \in \PP^{m}}{\pi^{*}f(x_{0}, \dots, x_{m}) = 0},
\]
and the morphism $\pi \colon  \PP^{m} \longrightarrow \PP(\w)$ induces a morphism
\[
\begin{CD}
\pi \colon X_{f} & @>>> & Y_{f}
\end{CD}
\]
providing a diagram
\[
\xymatrix{X_{f} \ar[rr]^{\pi} \ar[rd]^{p} & & Y_{f} \\ & X_{f}/G \ar[ru]^{\sim} } 
\]
and the morphism $\pi$ enjoys the properties of Proposition~\ref{GGQ}.

\subsubsection{Weighted binary forms}

Let $\w = (a_{0}, a_{1})$ and assume $a_{1} > 1$. We work with the weighted 
projective line $\PP(a_{0}, a_{1})$. It is known that
$\PP(a_{0}, a_{1}) \simeq \PP^{1}$, see \cite[p. 38]{Dgv1982}. If $P_{0} = (0:1)$
then $\PP(a_{0}, a_{1}) = D_{0} \cup \{P_{0}\}$.

\begin{proposition}[D'Alembert's theorem for weighted binary forms]
Let $\w = (1, a_{1})$. Let $f \in k[X_{0}, X_{1}]$ be a binary weighted form 
with weighted degree $d$, where $a_{1} \mid d$. Then the finite set
\[
X_{f} = \set{(x_{0}, x_{1}) \in \PP(1, a_{1})}{f(x_{0}, x_{1}) = 0}
\]
satisfies
\[
\card{X_{f}} \leq \frac{d}{a_{1}}.
\]
\end{proposition}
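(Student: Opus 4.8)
The plan is to reduce the statement to the fundamental theorem of algebra (D'Alembert's theorem) for \emph{univariate} polynomials, exploiting the fact that the weight $a_0 = 1$ allows a clean dehomogenization in the chart $X_0 \neq 0$. Set $n := d/a_1$, which is an integer by hypothesis. Since the monomials of $\w$-degree $d$ are precisely $X_0^{a_1(n-j)}X_1^{j}$ for $j = 0, 1, \ldots, n$, the form $f$ can be written as
\[
f(X_0, X_1) = \sum_{j=0}^{n} c_j\, X_0^{a_1(n-j)} X_1^{j}, \qquad c_j \in k,
\]
with the $c_j$ not all zero: if $f = 0$, then $X_f$ would be all of $\PP(1,a_1)$ and hence infinite, contrary to the hypothesis that it is finite.

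First I would treat the affine part, i.e.\ the points with $x_0 \neq 0$. Because $a_0 = 1$, each such point has a unique representative of the form $(1 : y)$ with $y = x_1/x_0^{a_1}$, and weighted homogeneity gives $f(x_0, x_1) = x_0^{d} f(1, y)$. Thus the affine zeros of $f$ are in bijection with the roots of the single-variable polynomial
\[
g(Y) := f(1, Y) = \sum_{j=0}^{n} c_j\, Y^{j} \in k[Y].
\]
Since $f \neq 0$ and the assignment $f \mapsto g$ is injective (the exponents $j$ are pairwise distinct), $g$ is a nonzero polynomial of degree at most $n$, and therefore has at most $\deg g \leq n$ distinct roots.

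Next I would account for the single remaining point, the point at infinity $P_0 = (0:1)$. Evaluating, one finds $f(0,1) = c_n$, because the factor $0^{a_1(n-j)}$ vanishes for every $j < n$. This is the hinge of the whole argument. If $c_n \neq 0$, then $P_0 \notin X_f$ while $\deg g = n$, so $\card{X_f}$ equals the number of affine zeros, which is $\leq n$. If instead $c_n = 0$, then $P_0 \in X_f$, but now $\deg g \leq n-1$, so there are at most $n-1$ affine zeros and, adding $P_0$, one still gets $\card{X_f} \leq n$. In either case $\card{X_f} \leq n = d/a_1$.

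I do not expect a genuine obstacle here: this is just the weighted counterpart of the classical fact that a binary form of degree $d$ on $\PP^1$ has at most $d$ zeros. The only point requiring care is the bookkeeping at infinity, namely the observation that $f$ vanishing at $P_0$ is equivalent to $c_n = 0$, which is in turn equivalent to a drop of $\deg g$ below $n$; this ensures that the contribution of $P_0$ is always \emph{exactly} compensated by the loss of one affine root. It is precisely the hypothesis $a_1 \mid d$ that makes $X_1^{n}$ a genuine monomial of $\w$-degree $d$ and thereby keeps the two cases in perfect balance.
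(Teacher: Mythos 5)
Your proof is correct and is essentially the same as the paper's: dehomogenize on the chart $x_0 \neq 0$ to a univariate polynomial of degree at most $d/a_1$, and observe that $f$ vanishes at $(0:1)$ exactly when the coefficient of $X_1^{d/a_1}$ is zero, which is exactly when the dehomogenized polynomial drops degree. The only cosmetic difference is that the paper runs the computation for general weights $(a_0,a_1)$ with $a_0a_1 \mid d$, using the coordinate $u = x_1^{a_0}/x_0^{a_1}$, before specializing to $a_0 = 1$, where it reduces to your $y = x_1/x_0^{a_1}$.
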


\begin{proof}
Let $\w = (a_{0}, a_{1})$, and assume $a_{0} a_{1} \mid d$. We have
\begin{align*}
f(x_{0}, x_{1}) & = \sum_{r_{0},r_{1}} c_{r_{0},r_{1}} x_{0}^{r_{0}} x_{1}^{r_{1}} 
                     \quad (a_{0} r_{0} +  a_{1} r_{1} = d) \\
\intertext{and in decreasing powers of $x_{1}$:}
f(x_{0}, x_{1}) & =  c_{0,d/a_{1}} x_{1}^{d/a_{1}} + \dots 
                     + c_{r_{0},r_{1}} x_{0}^{r_{0}} x_{1}^{r_{1}} + \dots 
                     + c_{d/a_{0},0} x_{0}^{d/a_{0}}.
\end{align*}
Notice that $a_{0}$ divides every index $r_{1}$. If $x_{0} = 0$ the equation 
reduces to
\[
c_{0,d/a_{1}} x_{1}^{d/a_{1}} = 0
\]
and the equation has exactly one solution if $c_{0,d/a_{1}} = 0$, namely 
$P_{0}$, and none otherwise. In $D_{0}$, we have as well
\begin{align*}
\frac{f(x_{0}, x_{1})}{x_{0}^{d/a_{0}}} 
         &= c_{0,d/a_{1}} \frac{x_{1}^{d/a_{1}}}{x_{0}^{d/a_{0}}} + \dots
            + c_{r_{0},r_{1}} \frac{x_{1}^{r_{1}}}{x_{0}^{a_{1} r_{1}/a_{0}}} + \dots 
            + c_{d/a_{0},0} \\
         &= c_{0,d/a_{1}} \left( \frac{x_{1}^{a_{0}}}{x_{0}^{a_{1}}} \right)^{d/a_{0} a_{1}} + \dots
            + c_{r_{0},r_{1}} \left( \frac{x_{1}^{a_{0}}}{x_{0}^{a_{1}}} \right)^{r_{1}/a_{0}} + \dots 
            + c_{d/a_{0},0}  \\
         &= f_{0}(u), \\
\intertext{with $u = x_{1}^{a_{0}}/x_{0}^{a_{1}}$, and}
f_{0}(u) &= c_{0,d/a_{1}} u^{d/a_{0} a_{1}} + \dots 
            + c_{r_{0},r_{1}} u^{r_{1}/a_{0}} + \dots + c_{d/a_{0},0}.
\end{align*}
This is a polynomial of degree $\leq d/a_{0} a_{1}$ with strict inequality if
$c_{0,d/a_{1}} = 0$. If $\w = (1, a_{1})$, the morphism 
$\varphi \colon U_{0} \to \Aff^{1}$ given by
\[
\varphi(x_{0} : x_{1}) = u = \frac{x_{1}}{x_{0}^{a_{1}}}
\]
is an isomorphism, with inverse morphism given by $u \mapsto (1 : u)$, and 
$\card{X_{f}} \leq d/a_{1}$. \qed
\end{proof}

\subsubsection{Weighted ternary forms}
\label{app:ore}

We are interested on weighted projective plane curves in the weighted 
projective plane $\PP(1,a_{1}, a_{2})$, that is, $m = 2$ and 
$\w = (1, a_{1}, a_{2})$.  We assume $1 < a_{1} < a_{2}$. Recall the notation:
the morphism 
\[
\xymatrix{\psi \colon U_{0} \ar[r] & \Aff^{2}_{0}}
\]
\[
\psi(x_{0} : x_{1} : x_{2}) = (1,y_{1}, y_{2}),
\]
where
\[
y_{1} = \frac{x_{1}}{x_{0}^{a_{1}}}, \quad 
y_{2} = \frac{x_{2}}{x_{0}^{a_{2}}} \, ,
\]
corresponds to the morphism of algebras
\[
\begin{CD}
\psi^{\flat} \colon k[X_{0},X_{1},X_{2}] & @>>> & k[Y_{1}, Y_{2}]
\end{CD}
\]
where
\[
Y_{1} = \frac{X_{1}}{X_{0}^{a_{1}}}, \quad
Y_{2} = \frac{X_{2}}{X_{0}^{a_{2}}} \, .
\]
The morphism $\psi$ is an isomorphism, with inverse 
$\varphi \colon \Aff^{2}_{0} \to U_{0}$ given by
\[
\psi(1,y_{1}, y_{2}) = (1 : y_{1} : y_{2}).
\]

The complement of $U_{0}$ is the weighted projective line $\PP(a_{1}, a_{2})$,
and $\PP(1,a_{1}, a_{2})$ is a compactification of the affine plane.

Recall that Ore's inequality (1922) for forms is the following: Let $f$ be a 
form in $m + 1$ variables, of degree $d$, defined over $\Fq$.  Define
\[
X_{f} = \set{x \in \PP^{m}}{f(x) = 0},
\]
and $(X_{f})^{\aff} = X_{f} \cap U_{0}$. Then
\[
\card{(X_{f})^{\aff}(\Fq)} \leq d q^{n - 1}.
\]

\begin{proposition}
Let $\w = (1, a_{1}, a_{2})$ and $f \in k[X_{0}, X_{1}, X_{2}]$ a ternary 
weighted form with weighted degree $d$, where $a_{1} a_{2} \mid d$. Define
\[
X_{f} = \set{(x_{0}, x_{1}, x_{2}) \in \PP(1,a_{1}, a_{2})}{f(x_{0}, x_{1}, x_{2}) = 0}.
\]
\begin{enumerate}
\item \label{WHO1}
      \textup{(}Ore's inequality for weighted ternary forms\textup{)}.
      Let $(X_{f})^{\aff} = X_{f} \cap U_{0}$. Then
      \[
      \card{(X_{f})^{\aff}(\Fq)} \leq \frac{d}{a_{1}} q.
      \]
\item \label{WHO2}
      We have
      \[
      \card{X_{f}(\Fq)} \leq \frac{d}{a_{1}} q + 1.
      \]
\end{enumerate}
\end{proposition}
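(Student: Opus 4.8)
\emph{Part~\eqref{WHO1}.} The plan is to transport the problem to the affine plane and then invoke the classical Ore inequality. Using the isomorphism $\psi\colon U_{0}\xrightarrow{\sim}\AA^{2}$ of the previous subsection, an $\Fq$-rational point $(x_{0}:x_{1}:x_{2})$ with $x_{0}\neq 0$ corresponds to $(y_{1},y_{2})=(x_{1}/x_{0}^{a_{1}},\,x_{2}/x_{0}^{a_{2}})\in\AA^{2}(\Fq)$, and weighted homogeneity gives $f(x_{0},x_{1},x_{2})=x_{0}^{d}\,g(y_{1},y_{2})$ with $g(y_{1},y_{2}):=f(1,y_{1},y_{2})$. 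Thus $(X_{f})^{\aff}(\Fq)$ is carried bijectively onto the zero set of $g$ in $\AA^{2}(\Fq)$. First I would check that $g\neq 0$: distinct monomials $X_{0}^{i_{0}}X_{1}^{i_{1}}X_{2}^{i_{2}}$ of $\w$-degree $d$ have distinct exponent pairs $(i_{1},i_{2})$, since $i_{0}=d-a_{1}i_{1}-a_{2}i_{2}$ is then forced; hence dehomogenization is injective on monomials and $f\neq 0$ forces $g\neq 0$. Next I would bound the total degree of $g$: every monomial of $g$ satisfies $a_{1}i_{1}+a_{2}i_{2}\leq d$ (because $i_{0}\geq 0$), and since $a_{1}<a_{2}$ we get $a_{1}(i_{1}+i_{2})\leq a_{1}i_{1}+a_{2}i_{2}\leq d$, whence $i_{1}+i_{2}\leq d/a_{1}$. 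Therefore $g$ is a nonzero bivariate polynomial of total degree at most $d/a_{1}$, and the classical Ore inequality (the $m=2$ case recalled above) gives at most $(d/a_{1})q$ zeros in $\AA^{2}(\Fq)$, which is exactly the assertion.

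\emph{Part~\eqref{WHO2}.} Here the plan splits according to the size of $d$. If $d\geq a_{1}(q+1)$, then since $a_{1}\mid d$ we have $d/a_{1}\geq q+1$, so $(d/a_{1})q+1\geq(q+1)q+1=q^{2}+q+1=\card{\PP(1,a_{1},a_{2})(\Fq)}$ and the bound holds trivially. The substantial case is $d\leq a_{1}(q+1)$ with the right-hand side not yet trivial. This is precisely the content of Theorem~\ref{serreforplanes} for $m=2$, whose proof runs Serre's incidence (double-counting) argument; the affine input it requires, namely $\card{(X_{f})^{\aff}(\Fq)}\leq(d/a_{1})q$, is supplied by Part~\eqref{WHO1}. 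Concretely I would decompose $\PP(1,a_{1},a_{2})=U_{0}\sqcup P_{0}$ with $P_{0}\cong\PP(a_{1},a_{2})$ the line at infinity, fix a point $P\in X_{f}$, and count pairs $(P',H)$ of rational points $P'\in X_{f}$ and $\Fq$-rational lines $H$ through both $P$ and $P'$, estimating $\card{X_{f}\cap H}$ on each line type by the one-variable bounds of the previous subsections.

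\emph{The main obstacle.} The crucial point is that Part~\eqref{WHO2} does \emph{not} follow by simply adding Part~\eqref{WHO1} to a bound on the points at infinity. Indeed, the line at infinity $P_{0}$ can meet $X_{f}$ in as many as roughly $d/(a_{1}a_{2})$ points, and the naive sum $(d/a_{1})q+d/(a_{1}a_{2})+1$ overshoots the target $(d/a_{1})q+1$. The affine and the infinite contributions genuinely interact—more points at infinity must force proportionally fewer affine points—and it is exactly this interaction that the incidence argument captures and that a bare application of Ore's inequality cannot. Thus the real work in Part~\eqref{WHO2} is the double counting, with Part~\eqref{WHO1} entering only as one of its inputs.
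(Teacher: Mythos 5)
Your proposal is correct and takes essentially the same route as the paper: part~\eqref{WHO1} is proved there by exactly your dehomogenization $Y_{1}=X_{1}/X_{0}^{a_{1}}$, $Y_{2}=X_{2}/X_{0}^{a_{2}}$, the degree bound $\deg \leq d/a_{1}$, and the classical Ore inequality, while part~\eqref{WHO2} is handled by deferring to Theorem~\ref{serreforplanes}, whose proof is precisely the Serre-style incidence (double-counting) argument you describe. Your extra details---verifying that the dehomogenized polynomial is nonzero, splitting off the trivial case $d\geq a_{1}(q+1)$, and the observation that the affine and infinite contributions cannot simply be added---are all sound and consistent with the paper's argument.
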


\begin{proof}
Proof of \eqref{WHO1}: we write
\[
f(X_{0}, X_{1}, X_{2})  = \sum c_{i} X_{0}^{p_{i}} X_{1}^{q_{i}} X_{2}^{r_{i}},
\quad
p_{i} + a_{1} q_{i} + a_{2} r_{i} = d.
\]
The general term of $f/X_{0}^{d}$ is
\[
\frac{X_{0}^{p_{i}} X_{1}^{q_{i}} X_{2}^{r_{i}}} {X_{0}^{p_{i} + a_{1} q_{i} + a_{2} r_{i}}}
= \frac{X_{1}^{q_{i}}}{X_{0}^{a_{1} q_{i}}} \cdot \frac{X_{2}^{r_{i}}}{X_{0}^{a_{2} r_{i}}}
= Y_{1}^{q_{i}} Y_{2}^{r_{i}}.
\]
If $p_{1} = r_{1} = 0$, then $q_{1} = d/a_{2}$, if $p_{2} = q_{2} = 0$, then 
$q_{2} = d/a_{2}$, and if $q_{0} = r_{0} = 0$, then $p_{0} = d$. Hence,
\begin{align*}
f(X_{0}, X_{1}, X_{2})                   &= c_{1} X_{1}^{d/a_{1}} + c_{2} X_{2}^{d/a_{2}} + \dots + c_{0} X_{0}^{d}, \\
\intertext{and}
\frac{f(X_{0}, X_{1}, X_{2})}{X_{0}^{d}} &= c_{1} Y_{1}^{d/a_{1}} + c_{2} Y_{2}^{d/a_{2}} + \dots + Y_{1}^{q_{i}} Y_{2}^{r_{i}} + \dots + c_{0}.
\end{align*}
This is a bivariate polynomial of degree $\leq d / a_{1}$ in $\Aff^{2}$.  We get
the result using the usual Ore inequality. For a proof of~\eqref{WHO2}, see 
Theorem~\ref{serreforplanes} in the main text.\qed
\end{proof}


\providecommand{\bysame}{\leavevmode\hbox to3em{\hrulefill}\thinspace}
\providecommand{\xxMR}[2]{\relax\ifhmode\unskip\space\fi
  \href{http://www.ams.org/mathscinet-getitem?mr=#2}{MR~#1}}
\providecommand{\xxZBL}[1]{\relax\ifhmode\unskip\space\fi
  \href{http://www.emis.de/cgi-bin/MATH-item?#1}{ZBL~#1}}
\providecommand{\xxJFM}[1]{\relax\ifhmode\unskip\space\fi
  \href{http://www.emis.de/cgi-bin/JFM-item?#1}{JFM~#1}}
\providecommand{\xxARXIV}[2]{\relax\ifhmode\unskip\space\fi
  \href{http://arxiv.org/abs/#2}{arXiv:#1}}
\providecommand\bibmarginpar{\leavevmode\marginpar}
\providecommand{\href}[2]{#2}

\end{subappendices}
\end{document}